\numberwithin{equation}{section}
\numberwithin{figure}{section}
\theoremstyle{plain}
\newtheorem{theorem}{Theorem}[section]
\newtheorem{corollary}[theorem]{Corollary}
\newtheorem{lemma}[theorem]{Lemma}
\newtheorem{conjecture}[theorem]{Conjecture}
\newtheorem{problem}[theorem]{Problem}
\newtheorem{claim}[theorem]{Claim}
\renewcommand{\epsilon}{\varepsilon}
\newcommand{\Exp}{\mathbb{E}}
\newcommand{\Prob}{\mathbb{P}}
\begin{document}

	\title[Tree universality in positional games]{%
		Tree universality in positional games}
	
	\author[Grzegorz Adamski]{Grzegorz Adamski}
	\author[Sylwia Antoniuk]{Sylwia Antoniuk}
	\author[Ma\l gorzata Bednarska-Bzd\c{e}ga]{%
		Ma\l gorzata Bednarska-Bzd\c{e}ga}
	\author[Dennis Clemens]{Dennis Clemens}
	\author[Fabian Hamann]{Fabian Hamann}
	\author[Yannick Mogge]{Yannick Mogge}
	
	\thanks{The research of the fourth and sixth author is supported 
		by Deutsche Forschungsgemeinschaft (Project CL 903/1-1).}
	
	\address{Department of Discrete Mathematics, Faculty of Mathematics 
		and CS, Adam Mickiewicz University, Pozna\'n, Poland}
	\email{grzegorz.adamski@amu.edu.pl, sylwia.antoniuk@amu.edu.pl, 
		mbed@amu.edu.pl}
	\address{Institute of Mathematics, Hamburg University of Technology, 
		Hamburg, Germany.}
	\email{dennis.clemens@tuhh.de, fabian.hamann@tuhh.de,
		yannick.mogge@tuhh.de}

	\pagestyle{plain}
	
	\maketitle
	
	\begin{abstract}
		In this paper we consider positional games
		where the winning sets are tree universal graphs.
		Specifically, we show that in the unbiased Maker-Breaker
		game on the complete graph $K_n$,
		Maker has a strategy to occupy a graph which contains
		copies of all spanning trees with maximum degree at most $cn/\log(n)$, for a suitable constant $c$ and $n$ being large enough. We also prove an analogous result for
		Waiter-Client games. Both of our results show that the building player can play at least as good as
		suggested by the random graph intuition. Moreover,
		they improve on a special case of earlier results
		by Johannsen, Krivelevich, and Samotij 
		as well as Han and Yang for Maker-Breaker games.
	\end{abstract}

	\section{Introduction}
	A positional game is a perfect-information game played by two players on a hypergraph denoted as $\mathcal{H} = (\mathcal{X}, \mathcal{F})$, where $\mathcal{X}$ is called the board, and $\mathcal{F}$ is a family of winning sets. In this type of game, both players claim elements of the board $\mathcal{X}$, following predefined rules. The victor is determined based on the family of winning sets $\mathcal{F}$. Over the past decade, positional games were extensively studied (for a comprehensive overview, refer to \cite{hefetz2014positional}), and various variants have been considered. In this paper, we focus on unbiased Maker-Breaker and Waiter-Client games played on the edge set of the complete graph $K_n$ with the winning sets being \textit{tree-universal} graphs on $n$ vertices, i.e. graphs which contain a copy of every tree $T$ on $n$ vertices with the maximum degree $\Delta(T) \leq \Delta(n)$ bounded by a suitable function on $n$. Our results improve a result by Johannsen, Krivelevich, and Samotij~\cite{johannsen2013expanders} from 2013, and make progress in answering a question by Ferber, Hefetz and Krivelevich~\cite{ferber2012fast} from 2012.
	
	\subsection{Maker-Breaker games concerning spanning trees}
	A $(1:b)$ Maker-Breaker game on some hypergraph $\mathcal{H} = (\mathcal{X},\mathcal{F})$ is played as follows: Maker and Breaker take turns claiming elements of the board $\mathcal{X}$. Maker always takes one element per turn while Breaker takes $b$ elements, except perhaps in his final move. The value of $b$ is referred to as the bias. Maker wins if she successfully claims an entire winning set $F \in \mathcal{F}$, otherwise Breaker wins. It is observed easily that it is only beneficial for Breaker to claim more elements, and thus there is a threshold bias $b_{\mathcal{H}}$ such that Breaker wins if and only if $b > b_{\mathcal{H}}$ (excluding degenerate cases).
	
	Already in 1964, Lehman~\cite{lehman1964solution} discovered that Maker easily wins the $(1:1)$ connectivity game $\mathcal{C}$ on $K_n$, i.e.~the game where the winning sets consist of all spanning trees of $K_n$, and she can even do so, if the board only consists of two edge-disjoint spanning trees. There are different natural related questions which have been investigated since then. Chv\'atal and Erd\H{o}s~\cite{chvatal1978biased} proved in 1978 that the threshold bias $b_{\mathcal{C}}$ for the connectivity game on $K_n$ is of order $\frac{n}{\log(n)}$, and suspected that there is an interesting relation between Maker-Breaker games and random graphs. They conjectured that for certain biased Maker-Breaker games the more likely winner between two random players would be the same as the winner between two perfect players. This relation is commonly referred to as the \textit{random graph intuition}, and holds for several different Maker-Breaker games. Indeed, Gebauer and Szab\'o~\cite{gebauer2009asymptotic} showed in 2009 that $b_{\mathcal{C}} = (1 + o(1))\frac{n}{\log(n)}$ which confirms the random graph intuition for the connectivity game.
	
	Going one step further, one can also inspect whether Maker can claim a copy of a fixed spanning tree instead of just any spanning tree and if so, how fast she can do it. In particular, an intriguing question was asked by Ferber, Hefetz, and Krivelevich~\cite{ferber2012fast} in 2012: What is the largest $d = d(n) \in \mathbb{N}$ such that Maker can claim any fixed tree $T$ on $n$ vertices with maximum degree $\Delta(T) \leq d$ for $n$ large enough? 
	
	In the unbiased case, following~\cite{krivelevich2010embedding}, the random graph intuition would suggest $d(n) = \Theta(\frac{n}{\log(n)})$. However, the current best-known results are quite far away from this desired value. 
	In 2009, Hefetz, Krivelevich, Stojakovi\'c, and Szab\'o~\cite{hefetz2009fast} showed that Maker can claim a Hamilton path in the $(1:1)$ game within $n-1$ rounds. Similarly, Maker can claim any fixed tree $T$ with $\Delta(T) = O(1)$ within $n+1$ rounds, provided $n$ is large enough~\cite{clemens2015building}. 
	In 2012, Ferber, Hefetz and Krivelevich~\cite{ferber2012fast} showed that Maker can claim any fixed tree $T$ with $\Delta(T) \leq n^{0.05}$ within $n + o(n)$ rounds, even in the biased version as long as $b < n^{0.005}$. Johannsen, Krivelevich, and Samotij~\cite{johannsen2013expanders} further improved this maximum degree for the unbiased setting, where their result (which is stated more generally for expander graphs) is \textit{universal}, giving that in a $(1:1)$ game on $K_n$, Maker can claim a single graph containing copies of all trees $T$ on $n$ vertices such that $\Delta(T) \leq \frac{c n^{1/3}}{\log(n)}$, for some suitable $c$ and large enough $n$.
	
	We further improve on this result, and show that Maker can play asymptotically at least as good as the random graph intuition suggests.
	
	\begin{theorem}\label{thm:MB.tree.universal}
		There exists a constant $c>0$ such that the following holds for every large enough integer $n$.
		In the $(1:1)$ Maker-Breaker game on $K_n$,
		Maker has a strategy to occupy a graph which contains 
		a copy of every tree $T$ with $n$ vertices
		and maximum degree $\Delta(T)\leq \frac{cn}{\log(n)}$.
	\end{theorem}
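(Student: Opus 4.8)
The plan is to build a Maker graph that is a good expander, and then invoke a universality result for embedding bounded-degree spanning trees into expanders. The key point is that Maker should aim to occupy a spanning subgraph $G\subseteq K_n$ with two properties: a suitable vertex-expansion property for sets of size up to roughly $n/\Delta$, and a matching/connectivity property linking every pair of disjoint linear-sized sets. This is precisely the type of structure exploited by Johannsen, Krivelevich, and Samotij~\cite{johannsen2013expanders}: once $G$ expands well enough, every spanning tree $T$ with $\Delta(T)\le cn/\log n$ embeds into $G$. So the real work is in the game-theoretic part, and the universality statement becomes a black box applied at the end.

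For the strategy itself, I would have Maker play a composite strategy on disjoint edge sets (via a standard partition of $E(K_n)$ into boards, or by interleaving moves). On one board Maker plays a pairing/expander-building strategy of the type used for the connectivity game, using that the threshold bias for connectivity is of order $n/\log n$; this guarantees Maker claims a linear number of edges at each vertex and that her graph has no small separators. On another board she plays to guarantee expansion of small sets: the obstacle is that the expansion factor must be at least of order $\log n$ for sets up to size $n/\Delta = \Theta(\log n)$ in order to support degree-$\Delta$ tree embeddings, and the expansion requirement for mid-range sets can be milder. I would formulate a ``box game'' / Erd\H{o}s--Selfridge style potential argument: for every candidate small set $S$ and every candidate small neighborhood-bound $N$, associate a dangerous configuration, and show that the sum of weights of all such configurations stays below the Erd\H{o}s--Selfridge threshold throughout the game, so Breaker can never fully block expansion. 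The reason a constant $c$ (rather than $o(1)$) is attainable here is that Maker only needs degree $\Theta(\log n)$ at vertices to feed the tree embedding, and she has $\Theta(n)$ moves per vertex available, so there is a constant factor of slack.

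Concretely the steps are: (1) fix the target expansion parameters $(k_1,k_2)$ and expansion factors so that the deterministic embedding lemma of~\cite{johannsen2013expanders} applies to all trees with $\Delta(T)\le cn/\log n$; (2) partition the board and describe Maker's strategy on each part — one part for global connectivity/no-small-cut, one for small-set expansion, possibly one for minimum-degree; (3) for the expansion part, set up the potential function indexed by pairs $(S,B)$ where $S$ is a would-be poorly-expanding set and $B$ is Breaker's attempted separator, and verify the initial potential is $<1$ using the bounds $|S|\le k_2$, $|B|\le (\text{factor})|S|$, and $\Delta \le cn/\log n$; (4) verify Maker can always respond to keep the potential non-increasing, hence wins each subgame; (5) take the union of Maker's graphs over all subgames and apply the embedding lemma. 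I expect step (3)—choosing the danger sets and checking that the binomial-type sum $\sum_{s\le k_2}\binom{n}{s}\binom{n}{\le \lambda s}2^{-(\text{edges Maker forces into }S)}$ is below $1$ with the right constant—to be the main obstacle, since it is where the constant $c$ and the trade-off between the two expansion scales are pinned down; everything else is either classical (connectivity game, Erd\H{o}s--Selfridge) or imported wholesale from~\cite{johannsen2013expanders}.
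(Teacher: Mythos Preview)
Your plan has a genuine gap at step~(1): the black-box universality lemma you intend to invoke does not exist in the strength you need. The Johannsen--Krivelevich--Samotij expander universality result only embeds spanning trees with $\Delta(T)\le c n^{1/3}/\log n$, and the sharpest known improvement along these lines (Han--Yang) reaches $\Delta(T)\le c n^{1/2}/\log n$. No expander-based universality theorem is currently available for $\Delta(T)=\Theta(n/\log n)$, so the embedding step is not ``imported wholesale'' --- it is in fact the main new content of the theorem. Your game-theoretic steps (2)--(4) are fine and essentially recover the earlier $n^{1/3}$ or $n^{1/2}$ results, but they do not close the gap to $n/\log n$.

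What the paper does instead is prove a new, bespoke sufficient condition for tree universality (Theorem~\ref{thm:universal_new}) that goes well beyond pure expansion: Maker's target graph must carry a spanning $K_5$-factor on a reserved vertex set $V_2$ (used to absorb bare paths), a designated star at a vertex $x^\ast$ (used to anchor leaf-neighbours when one vertex of $T$ dominates $N_T(L(T))$), and a pair-degree condition saying that for each $v$ at most $\log n$ vertices $w$ have $|N_M(v)\cap N_M(w)\cap V_1|<\alpha n$. These last two properties are exactly the sort that an Erd\H{o}s--Selfridge potential argument alone cannot deliver --- indeed Breaker can force $|N_M(v)\cap N_M(w)|=0$ for any \emph{fixed} pair --- so Maker's strategy splits into six interleaved subgames on disjoint boards, combining Erd\H{o}s--Selfridge with a $K_5$-factor game (Lemma~\ref{lem:k5}), degree games (Lemmas~\ref{lem:MinDeg} and~\ref{lem:Deg}), and a multistage criterion (Lemma~\ref{lem:criterion.multistage}). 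The embedding proof then does a case split on whether $T$ has many bare paths (handled via the $K_5$-factor and an auxiliary Dirac/Hall argument) or many leaves (handled via Haxell's lemma plus a star-matching lemma, with a randomised embedding of a small subtree in one subcase). The expansion properties you describe are present, but they are only one ingredient among several.
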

	
	Our proof technique is different from the one in~\cite{johannsen2013expanders}. In~\cite{johannsen2013expanders}, Maker in a $(1:1)$ game on $K_n$ builds a proper expander based on the Erd\H os-Selfridge criterion for Breaker’s win (see Lemma~\ref{lem:ES-criterion} in the next section). A natural way for obtaining a stronger result with this method would be to show stronger universality properties of expanders. Han and Yang~\cite{han2023expanders} went this route and showed that Maker in  a $(1:1)$ game on $K_n$ can build a graph containing copies of all spanning trees $T$ with $\Delta(T) \leq \frac{c n^{1/2}}{\log(n)}$. In our proof of Theorem~\ref{thm:MB.tree.universal}, Maker builds a graph having not only good expanding properties, but also other properties, which cannot be obtained by the Erd\H os-Selfridge criterion. Let us add that an advantage of the method in~\cite{johannsen2013expanders} is that it generalizes easily to biased Maker-Breaker games (in fact the authors present their result for biased games, played on expanders), while our method is less flexible in that sense. 
	
	\subsection{Waiter-Client games concerning spanning trees}
	A $(1:b)$ Waiter-Client game on some hypergraph $\mathcal{H} = (\mathcal{X},\mathcal{F})$ is played as follows: in each round, Waiter picks $b + 1$ elements of the board $\mathcal{X}$ and offers them to Client. Client chooses one of them for himself and returns the rest to Waiter. 
	If in the last round there are less than $b+1$ elements not selected yet, all the elements go to Waiter. Waiter wins if she is able to force Client to fully claim some winning set $F \in \mathcal{F}$. Otherwise, Client wins.
	
	Concerning spanning trees, similar results are known for Waiter-Client games as for Maker-Breaker games, where Waiter often can achieve better results because she has more control which edges get blocked.
	Similar to Maker-Breaker games, Waiter wins the $(1:1)$ connectivity game on some graph $G$ if $G$ contains two edge-disjoint spanning trees~\cite{csernenszky2009chooser}. Concerning the threshold bias, the third author together with Krivelevich and \L uczak~\cite{bednarska2016manipulative} showed that Waiter wins the $(1:b)$ Waiter-Client connectivity game on $K_n$ if and only if $b \leq \lfloor \frac{n}{2} \rfloor - 1$. 
	Moreover, in a $(1:1)$ Waiter-Client game on $K_n$ with $n$ large enough, Waiter can force Client to claim a Hamilton path within $n-1$ rounds, and she can force Client to claim any fixed tree $T$ with $\Delta(T) \leq c \sqrt{n}$ within $n$ rounds,
	for some suitable $c$ and $n$ large enough~\cite{clemens2020fast}. 
	Recently, we improved upon this result showing that Waiter can force any tree $T$ with $\Delta(T) < (\frac{1}{3} - o(1))n$ \cite{fixedtree}.
	
	Similar to the Maker-Breaker case, it is known that Waiter in the $(1:1)$ game on $K_n$ can force Client to claim a graph that contains copies of all trees $T$ with $\Delta(T) \leq \frac{c n^{1/3}}{\log(n)}$, 
	or even $\Delta(T) \leq \frac{c n^{1/2}}{\log(n)}$, which follows from the above mentioned universality properties of expanders proved in~\cite{johannsen2013expanders}, \cite{han2023expanders} and the fact that the Erd\H os-Selfridge criterion has its Waiter-Client counterpart (see~\cite{bednarska2013weight}). We again improve upon this result by showing a Waiter-Client version of Theorem~\ref{thm:MB.tree.universal}.
	
	\begin{theorem}\label{thm:WC.tree.universal}
		There exists a constant $c>0$ such that the following holds for every large enough integer $n$.
		In the $(1:1)$ Waiter-Client game on $K_n$,
		Waiter has a strategy to force Client to claim a graph which contains 
		a copy of every tree $T$ with $n$ vertices
		and maximum degree $\Delta(T)\leq \frac{cn}{\log(n)}$.
	\end{theorem}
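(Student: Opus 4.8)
The plan is to deduce the Waiter--Client statement from (a slight strengthening of) the Maker--Breaker construction in Theorem~\ref{thm:MB.tree.universal}, exploiting the fact that Waiter has \emph{more} control over which edges get blocked, so that essentially any positive result Maker can achieve by a pairing- or potential-based strategy can be upgraded to a Waiter--Client forcing strategy. Concretely, I would first isolate the combinatorial properties of the graph $G$ that Maker builds in the proof of Theorem~\ref{thm:MB.tree.universal} --- presumably a conjunction of (i) a suitable expansion/connectivity property and (ii) the extra local/pseudorandomness properties the authors allude to that are not obtainable from Erd\H{o}s--Selfridge --- and record them as a deterministic list of ``good events'' $\mathcal{P}_1,\dots,\mathcal{P}_k$, each of which is a monotone increasing graph property that the embedding argument for trees of maximum degree $cn/\log n$ relies on. The key point is that each $\mathcal{P}_i$ is a property Maker forces via a box/pairing argument or via a weight-function potential argument, and both of these transfer to Waiter--Client games.

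The main technical step is the transfer itself. For properties forced by a pairing strategy (Maker claiming one edge from each of a family of disjoint pairs), Waiter simply offers the two edges of a pair and is guaranteed Client takes one of them; so Client ends up with a ``pairing graph'' exactly as Maker would. For properties forced by a potential/weight argument à la Erd\H{o}s--Selfridge, one invokes the Waiter--Client analogue of the Erd\H{o}s--Selfridge criterion (the weight-function method of Bednarska-Bzd\c{e}ga~\cite{bednarska2013weight}, which the excerpt explicitly flags): if Waiter can keep a potential function below a threshold, she forces Client into the desired set. The delicate part is running these several strategies \emph{simultaneously} on the single board $E(K_n)$: I would partition $E(K_n)$ into edge-disjoint parts (a random-looking partition, or one obtained greedily), dedicate one part to each property $\mathcal{P}_i$, and check that within each part Waiter still has enough room to force $\mathcal{P}_i$ --- this is where the slack in the constant $c$ and the ``$n$ large enough'' hypothesis gets spent. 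Since each $\mathcal{P}_i$ only needs a linear-in-$n$ (up to logs) number of edges, a constant-size partition suffices, and the constants multiply out to a single admissible $c$.

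The final step is purely deterministic: once Client's graph satisfies $\mathcal{P}_1\cap\cdots\cap\mathcal{P}_k$, we rerun the universal tree-embedding argument from the proof of Theorem~\ref{thm:MB.tree.universal} verbatim, since that argument used only these properties of the host graph and not the game dynamics. This yields that Client's graph contains a copy of every tree on $n$ vertices with maximum degree at most $c'n/\log n$ for the adjusted constant $c'$.

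I expect the main obstacle to be the simultaneous-forcing bookkeeping: verifying that the edge-disjoint partition does not destroy any of the properties $\mathcal{P}_i$ --- in particular that the expansion and pseudorandomness conditions survive being restricted to a constant fraction of $K_n$'s edges --- and that the Waiter--Client weight-function method applies with the same quantitative bounds Maker enjoyed. If the proof of Theorem~\ref{thm:MB.tree.universal} is not literally a union of pairing and potential strategies but uses a more bespoke adaptive argument, the transfer is not automatic, and one would instead have to reprove the construction directly in the Waiter--Client setting, mirroring each of Maker's moves by an appropriate offer; the structure of the argument would be the same but every step would need its Waiter--Client justification.
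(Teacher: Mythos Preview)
Your high-level plan---isolate the deterministic properties (1)--(5) of Theorem~\ref{thm:universal_new}, force each separately on edge-disjoint subboards of $E(K_n)$ via Waiter--Client analogs of pairing and potential arguments, then rerun the embedding---is precisely what the paper does, including the random edge-partition and the use of the Waiter--Client Erd\H{o}s--Selfridge criterion (Theorem~\ref{thm:WC_transversal}) for the ``edges between sets'' property.

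Where your ``transfer Maker's subgames'' framing does not quite suffice is in two places, and your closing caveat applies to both. First, property~(3) (pair degree) is won by Maker \emph{indirectly}, via a minimum-degree game (Lemma~\ref{lem:MinDeg}) plus a fair-share game (Lemma~\ref{lem:criterion.multistage}); neither of these is a plain pairing or Erd\H{o}s--Selfridge argument, and the paper does not transfer them. Instead it proves a new Waiter--Client tool (Lemma~\ref{lem:large.common.degree}) that forces \emph{every} pair $v,w$ to have $|N_C(v)\cap N_C(w)|\geq \alpha n$ directly---something Maker provably cannot do, since Breaker can kill the common neighbourhood of any fixed pair. Second, property~(5) (the clique factor) in the Maker--Breaker proof is an adaptive interleaving of a $K_5$-factor game with a degree game, with $\mathcal{K}_{bad}$ chosen only after seeing Breaker's edges; in the Waiter--Client proof, Waiter instead invokes a Waiter--Client $K_5$-factor result and then forces a perfect matching between \emph{every} pair of $K_5$'s lying in different cells of a fixed partition of $V_2$ (Lemma~\ref{lem:matching.K55}), so that the split $\mathcal{K}_{good}\cup\mathcal{K}_{bad}$ may be taken arbitrarily. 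In both cases the Waiter--Client argument is not a translation of Maker's strategy but a genuinely new (and simpler) proof exploiting Waiter's extra control; your plan would need to supply these pieces rather than appeal to a generic transfer principle.
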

	
	\medskip
	
	\textbf{Organisation of the paper.}
	In Section~\ref{sec:preliminaries}
	we collect useful tools from probability, for positional games and for embedding trees.
	In Section~\ref{sec:tree.universal}
	we prove a sufficient condition
	for a graph to be universal for trees of large maximum degree. In Section~\ref{sec:maker.strategy}
	and in Section~\ref{sec:waiter.strategy}
	we then show that Maker and Waiter, respectively,
	have a strategy for creating such a graph,
	hence proving Theorem~\ref{thm:MB.tree.universal}
	and Theorem~\ref{thm:WC.tree.universal}.
	We add some concluding remarks in Section~\ref{sec:concluding}, in which we consider also the tree universality problem in Client-Waiter and Avoider-Enforcer games. 
	
	\medskip
	
	\subsection{Notation} 
	Most of our notation is standard 
	and follows that of~\cite{west2001introduction}.
	First of all, we set $[n]:=\{k\in\mathbb{N}:~ 1\leq k\leq n\}$ for
	every positive integer $n$. 
	Let $G$ be any graph. Then we write $V(G)$ and $E(G)$ for the vertex  set and the edge set of $G$, respectively, and we set 
	$v(G) :=|V(G)|$ and $e(G):=|E(G)|$.
	If $\{v,w\}$ is an edge in $G$,  
	we shortly write $vw$. 
	The neighborhood of $v$ is
	$N_G(v) : =\{w\in V(G): vw\in E(G)\}$,
	and its degree is
	$d_G(v) :=|N_G(v)|$.
	The maximum degree and minimum degree in $G$ are denoted
	$\Delta(G)$ and $\delta(G)$, respectively.
	Given any $A,B\subset V(G)$ and $v\in V(G)$,
	we write 
	$d_G(v,A):=|N_G(v)\cap A|$,
	$N_G(A):= \left(\bigcup_{v\in A} N_G(v)\right)\setminus A$, 
	$E_G(A):=\{vw\in E(G):~ v,w\in A\}$,
	$e_G(A):=|E_G(A)|$,
	$E_G(A,B):=\{vw\in E(G):~ v\in A,w\in B\}$,
	and
	$e_G(A,B):=|E_G(A,B)|$.
	Note that whenever the graph $G$ is clear from the context,
	we may omit the subscript $G$ in all definitions above.
	Given any $A\subset V(G)$, we let $G[A]=(A,E_G(A))$  
	be the subgraph of $G$ induced by $A$,
	and we set $G-A:=G[V(G)\setminus A]$.

	Let $G$ and $H$ be any graphs. 
	We write $H\subset G$ if both $V(H)\subset V(G)$ and 
	$E(H)\subset E(G)$ hold, and then call $H$ a subgraph of $G$.
	We say that $H$ and $G$ are isomorphic
	if there exists a bijection $b:V(H)\rightarrow V(G)$
	such that $vw$ is an edge of $H$ if and only if
	$b(v)b(w)$ is an edge of $G$. In this case we  
	also say that $H$ forms a copy of $G$. 
	An embedding of $H$ into $G$ is an injective map 
	$f:V(H)\rightarrow V(G)$ such that $vw\in E(H)$ implies
	$f(v)f(w)\in E(G)$.
	
	Let $T$ be a tree. Then we write $L(T)$ for the set of leaves, i.e.~all vertices of degree 1 in $T$. Moreover, a path $P$ in $T$ is called a bare path if all of its inner vertices
	have degree 2 in $T$. 
	
	Assume that some Waiter-Client game is in progress, then
	we let $C$ denote the graph consisting of 
	Client's edges only, with the vertex set being equal 
	to the graph that the game is played on.
	Similarly, in a Maker-Breaker game, we use $M$
	and $B$
	to denote the graph consisting of Maker's or Breaker's edges only.
	If an edge belongs to any player in the game, 
	then we call it claimed. Otherwise, we say that the edge is free. The graph consisting of the free edges will always be denoted $F$.
	
	We write Bin$(n,p)$ for the binomial random variable
	with $n$ trials, each having success independently 
	with probability $p$. We write $X\sim \text{Bin}(n,p)$ to   
	denote that $X$ is distributed like Bin$(n,p)$.
	We say that an event, depending on $n$,
	holds asymptotically almost surely (a.a.s.)
	if it holds with probability tending to 1 if $n$
	tends to infinity. For functions $f,g:\mathbb{N} \rightarrow \mathbb{R}$, we write $f(n)=o(g(n))$ if
	$\lim_{n\rightarrow \infty} |f(n)/g(n)| = 0$.
	All logarithms are with respect to basis $e$.

	\bigskip

	\section{Preliminaries}\label{sec:preliminaries}

	\subsection{Probabilistic tools}
	In some of our probabilistic arguments, we will use
	Chernoff bounds (see e.g.~\cite{janson2011random}) 
	to show concentration for binomially distributed random variables. 
	Specifically, we will use the following.
	
	\begin{lemma}\label{lem:Chernoff}
		If $X \sim \text{Bin}(n,p)$, then
		\begin{itemize}
			\item $\Prob(X<(1-\delta)np)
			< \exp\left(-\frac{\delta^2np}{2}\right)$ 
			for every $\delta>0$, and
			\item $\Prob(X>(1+\delta)np)< \exp\left(-\frac{np}{3}\right)$ 
			for every $\delta\geq 1$.
		\end{itemize}
	\end{lemma}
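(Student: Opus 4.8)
The plan is to prove both tail bounds with the standard exponential-moment (Chernoff) method. Write $X=\sum_{i=1}^n X_i$ as a sum of independent $\text{Bernoulli}(p)$ variables, so that for every real $t$ one has $\Exp\!\big[e^{tX}\big]=\big(1-p+pe^{t}\big)^n\le\exp\!\big(np(e^{t}-1)\big)$, the last inequality being just $1+x\le e^x$ applied to $x=p(e^t-1)$. Combining this bound with Markov's inequality applied to a suitable exponential of $X$, and then optimising over the free parameter $t$, is all the argument needs.

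For the upper tail I would fix $t>0$, note that $\Prob\big(X>(1+\delta)np\big)=\Prob\big(e^{tX}>e^{t(1+\delta)np}\big)\le e^{-t(1+\delta)np}\,\Exp[e^{tX}]\le\exp\!\big(np(e^{t}-1)-t(1+\delta)np\big)$, and choose $t=\log(1+\delta)$. This yields $\Prob\big(X>(1+\delta)np\big)\le\big(e^{\delta}(1+\delta)^{-(1+\delta)}\big)^{np}$, and it then remains to check the one-variable inequality $\delta-(1+\delta)\log(1+\delta)\le-\tfrac13$ for all $\delta\ge 1$. This is immediate: the left-hand side has derivative $-\log(1+\delta)<0$, so it is decreasing in $\delta$, and at $\delta=1$ it equals $1-2\log 2<-\tfrac13$.

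For the lower tail I may assume $0<\delta<1$, since for $\delta\ge 1$ the event $\{X<(1-\delta)np\}$ is empty. Fixing $t>0$ and applying Markov to $e^{-tX}$ gives $\Prob\big(X<(1-\delta)np\big)\le e^{t(1-\delta)np}\,\Exp[e^{-tX}]\le\exp\!\big(np(e^{-t}-1)+t(1-\delta)np\big)$; the choice $t=-\log(1-\delta)$ then gives $\Prob\big(X<(1-\delta)np\big)\le\big(e^{-\delta}(1-\delta)^{-(1-\delta)}\big)^{np}$. The argument closes with the classical estimate $-\delta-(1-\delta)\log(1-\delta)\le-\tfrac{\delta^2}{2}$ on $(0,1)$: setting $g(\delta)=-\delta-(1-\delta)\log(1-\delta)+\tfrac{\delta^2}{2}$, one computes $g(0)=g'(0)=0$ and $g''(\delta)=-\delta/(1-\delta)<0$, so $g\le 0$ throughout $(0,1)$.

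The main point is that there is no real obstacle here: once the two elementary one-variable inequalities above are in hand, they convert the optimised Chernoff expressions into the advertised clean bounds $\exp(-\delta^2np/2)$ and $\exp(-np/3)$, and nothing deeper is involved. Since these bounds are entirely standard, an equally acceptable route is simply to cite them from a reference such as~\cite{janson2011random}, as the statement already indicates.
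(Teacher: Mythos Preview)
Your argument is correct and entirely standard: the moment-generating function bound, Markov's inequality, the optimal choice of $t$, and the two one-variable calculus checks are all handled cleanly, including the trivial observation that the lower-tail event is empty for $\delta\ge 1$.

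The only thing to note is that the paper does not actually prove this lemma at all; it is stated as a quoted tool with a reference to~\cite{janson2011random} and used as a black box. So your write-up goes beyond what the paper does, supplying a self-contained proof where the paper simply cites one. Either route is fine here --- you even anticipate this at the end of your proposal --- and there is no meaningful difference in content to compare.
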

	
	\medskip

	\subsection{Maker-Breaker game tools}
	
	For the discussion of the Maker-Breaker tree universality game,
	we will use several tools from positional games theory.
	The first one is the famous Erd\H{o}s-Selfridge-Criterion~\cite{erdos1973combinatorial},
	stated as e.g.~in Theorem 2.3.3 in~\cite{hefetz2014positional}.
	
	\begin{lemma}[Erd\H{o}s-Selfridge-Criterion~\cite{erdos1973combinatorial}]\label{lem:ES-criterion}
		Let $(X,\mathcal{F})$ be a hypergraph satisfying
		$$\sum_{F\in \mathcal{F}} 2^{-|F| + 1} < 1.$$
		Then in the $(1:1)$ Maker-Breaker game on 
		$(X,\mathcal{F})$, Breaker has a strategy to
		claim at least one element in each of the
		winning sets in $\mathcal{F}$. 
	\end{lemma}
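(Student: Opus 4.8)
The plan is to use the classical weight-function (potential) argument. For every position of the game I would introduce a nonnegative \emph{danger function}: to each winning set $F\in\mathcal{F}$ that Breaker has not yet touched (i.e.\ $F\cap B=\emptyset$ in the current position) I assign the weight $2^{-|F\setminus M|}$, and I assign weight $0$ to every $F$ with $F\cap B\neq\emptyset$; then $\Phi$ denotes the sum of these weights over all $F\in\mathcal{F}$. Two observations are immediate. At the start $M=B=\emptyset$, so $\Phi=\sum_{F}2^{-|F|}<\tfrac12$ by hypothesis. And if at some moment Maker has completed a winning set $F$ (so $F\subseteq M$, hence also $F\cap B=\emptyset$), then that single set contributes $2^{-|F\setminus M|}=2^0=1$, so $\Phi\ge 1$ at that moment.

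Next I would describe Breaker's strategy: on each of his turns Breaker claims a free board element $y$ maximising $\dang(y):=\sum_{F\ni y,\ F\cap B=\emptyset}2^{-|F\setminus M|}$, all quantities referring to the current position. The heart of the proof is the claim that, under this strategy, the value of $\Phi$ measured \emph{immediately after each of Maker's moves} never increases from one round to the next, and is $<1$ already after Maker's first move. Granting this and the second observation above, Maker can never complete a winning set; since the board is finite and the hypothesis forbids $\emptyset\in\mathcal{F}$, at the end of the game every $F\in\mathcal{F}$ satisfies $F\subseteq M\cup B$ but $F\not\subseteq M$, hence $F\cap B\neq\emptyset$, which is exactly the assertion of the lemma.

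To prove the monotonicity claim I would track $\Phi$ at the instants right after Maker moves. When Maker claims a vertex $x$, each still-alive winning set through $x$ has its weight exactly doubled and all other weights are unchanged, so $\Phi$ increases by precisely the pre-move value $\dang(x)$; in particular after Maker's first move $\Phi\le\sum_F 2^{-|F|+1}<1$. When Breaker then claims $y$, every still-alive set through $y$ is killed, so $\Phi$ drops by exactly the current value $\dang(y)$. To compare consecutive rounds, I note that Maker's \emph{next} vertex $x'$ is still free at the instant Breaker moves, so Breaker's greedy choice yields $\dang(y)\ge\dang(x')$ evaluated at that instant; and during Breaker's move no weight can increase, so the value of $\dang(x')$ at the later instant (when Maker actually plays $x'$) is no larger. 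Hence the drop caused by Breaker's move is at least the rise caused by Maker's subsequent move, and the post-Maker values of $\Phi$ form a non-increasing sequence bounded by $1$, completing the claim.

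I expect the only genuinely delicate point to be exactly this bookkeeping of \emph{which instant} the potential is measured at: the naive attempt to let Breaker's move directly cancel Maker's preceding move fails, since Breaker cannot re-claim the vertex Maker just took, and the fix is to phase the accounting so that the relevant comparison is with Maker's \emph{upcoming} vertex, which is still available to Breaker. The remaining points are routine and I would dispatch each in a sentence: that Breaker always has a legal move while the game is still live, the handling of the final move and of the (easier) case in which Breaker moves first, and the vacuous exclusion of $\emptyset\in\mathcal{F}$.
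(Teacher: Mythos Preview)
Your argument is the classical potential-function proof of the Erd\H{o}s--Selfridge criterion and is correct, including the careful phasing of the potential at post-Maker instants so that Breaker's greedy drop can be compared against Maker's \emph{next} rise. The paper itself does not supply a proof of this lemma; it is quoted as a known tool with a citation to the original 1973 paper of Erd\H{o}s and Selfridge (and to the positional-games textbook), so there is no in-paper argument to compare against.
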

	
	We will also use statements ensuring that Maker can achieve sufficiently large degrees.

	\begin{lemma}[Mindegree game, Lemma 10 in~\cite{hefetz2009sharp}]\label{lem:MinDeg}
		Let $H$ be a graph of minimum degree $d$, then in a 
		$(1:1)$ Maker-Breaker game played on the edges of $H$, 
		Maker can build a spanning graph $M$ with minimum degree
		at least $\lfloor d/4 \rfloor$.
	\end{lemma}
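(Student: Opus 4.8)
The plan is to convert the vertex-degree requirements, which overlap because each edge meets two vertices, into a family of \emph{pairwise disjoint} requirements, on which a pairing strategy is available. The one-line obstruction to a naive pairing strategy is that one cannot simultaneously pair up the edges around every vertex, since an edge $uv$ lies both ``at $u$'' and ``at $v$''. I would remove this obstruction by first orienting $H$.

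First I would orient the edges of $H$ so that every vertex has out-degree at least $\lfloor d_H(v)/2\rfloor\ge\lfloor d/2\rfloor=:m$. Such an orientation exists by the standard Eulerian argument: attach an auxiliary vertex to all odd-degree vertices of $H$ (there are an even number of them), orient each component along an Euler tour so that $d^+=d^-$ everywhere, then delete the auxiliary vertex, which changes each out-degree by at most one. Writing $O_v$ for the set of edges directed out of $v$, the sets $\{O_v\}_{v\in V(H)}$ then partition $E(H)$ and satisfy $|O_v|\ge m$.

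Next I would fix, inside each $O_v$, an arbitrary partition into $\lfloor |O_v|/2\rfloor$ pairs plus at most one leftover edge; together these give a partial pairing $\mathcal P$ of $E(H)$ each of whose pairs lies inside a single $O_v$. Maker plays the corresponding pairing strategy: if Breaker's last move was an edge $e$ whose $\mathcal P$-partner is still free, Maker takes that partner; otherwise she takes an arbitrary free edge (in particular on her very first move). A routine check then shows that for each pair of $\mathcal P$ Maker ends up owning at least one of its two edges: in the first round in which an edge of the pair gets claimed, either Breaker claims one of them and Maker answers with the partner immediately afterwards (she never has two outstanding obligations, since a Breaker move claims only one edge), or Maker herself claims one and keeps it. Consequently, for every vertex $v$, $d_M(v)\ge\lfloor |O_v|/2\rfloor\ge\lfloor m/2\rfloor=\big\lfloor\lfloor d/2\rfloor/2\big\rfloor=\lfloor d/4\rfloor$, so the spanning graph $M$ of Maker's edges has minimum degree at least $\lfloor d/4\rfloor$.

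The step I expect to carry the weight is the orientation: it is exactly what decouples the degree constraints into disjoint ``boxes'', after which the pairing strategy and the floor identity $\big\lfloor\lfloor d/2\rfloor/2\big\rfloor=\lfloor d/4\rfloor$ are just bookkeeping. The only other point to watch is the turn order — Maker moving first merely wastes one move, which a pairing strategy absorbs without loss.
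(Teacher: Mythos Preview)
Your argument is correct: the Eulerian orientation gives out-degree at least $\lfloor d_H(v)/2\rfloor$ at every vertex, the out-stars $O_v$ partition $E(H)$, and the pairing strategy on pairs within each $O_v$ secures at least $\lfloor|O_v|/2\rfloor\ge\lfloor d/4\rfloor$ incident edges for Maker at every vertex, with the floor identity $\lfloor\lfloor d/2\rfloor/2\rfloor=\lfloor d/4\rfloor$ closing the count. The paper does not give its own proof of this lemma --- it merely cites Lemma~10 of~\cite{hefetz2009sharp} --- so there is nothing in the paper to compare against; your orientation-plus-pairing argument is in fact the standard proof of this statement.
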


	\begin{lemma}[Degree game, Corollary of Lemma 6 in~\cite{balogh2009diameter}]\label{lem:Deg}
		Playing a $(1:2)$ Maker-Breaker game on the edges of $K_n$, Maker can ensure that every vertex reaches
		degree at least
		$\frac{1}{3}n - 3\sqrt{n\log(n)}$
		in her graph.
	\end{lemma}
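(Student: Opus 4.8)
The plan is to read the statement off the degree game result quoted just above, i.e.\ Lemma~6 of~\cite{balogh2009diameter}, of which the present claim is the $(1:2)$, $K_n$ instance once the error term is simplified; so the ``proof'' is essentially one citation, and I would sketch only why such a bound holds and where the work lies. It is convenient to phrase everything on Breaker's side. Since the board is the complete graph, by the end of the game every edge of $K_n$ belongs to exactly one player, so $d_M(v)=n-1-d_B(v)$ for each vertex $v$; hence it is enough to give Maker a strategy ensuring $d_B(v)\le \tfrac{2}{3}n+3\sqrt{n\log n}$ for \emph{all} $v$ at once, i.e.\ $\Delta(B)$ never runs more than about $3\sqrt{n\log n}$ above Breaker's ``fair share'' $\tfrac23(n-1)$ of a vertex degree.

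This is exactly what the degree game provides, via an Erd\H{o}s--Selfridge--type potential argument played on Maker's behalf. Maker keeps a single potential that sums over all vertices an exponential penalty measuring how far $d_B(v)$ has run ahead of Breaker's fair share there, with the exponential base tuned to $1+\eta$ where $\eta=\Theta\!\big(\sqrt{\log n/n}\big)$; on each turn she claims a free edge at a currently heaviest vertex, permanently removing it from the set of edges through which Breaker could still raise that vertex's weight. A (somewhat delicate) accounting of how Breaker's two edges inflate the potential against how much Maker's one edge deflates it keeps the potential bounded throughout, and unravelling the exponential converts this into $d_B(v)\le \tfrac23(n-1)+O(\eta^{-1}\log n)=\tfrac23 n+O(\sqrt{n\log n})$ for every $v$; choosing the constants carefully yields the displayed $3\sqrt{n\log n}$. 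For the write-up it is cleanest simply to invoke Lemma~6 of~\cite{balogh2009diameter} in the case of the $(1:2)$ game on $K_n$ and to simplify the error term using $\sqrt{(n-1)\log n}\le\sqrt{n\log n}$.

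The one genuine obstacle is simultaneity: Maker has a single edge per round against Breaker's two, so once Breaker overloads a vertex Maker cannot repair it, and a single aggregated potential --- equivalently a union bound over the $n$ vertices, each controlled by a Chernoff-type tail --- can only absorb deviations of order $\sqrt{n\log n}$ from the fair share. That is why the bound is not of the cleaner shape $\tfrac n3-O(1)$: it is the game-theoretic counterpart of the fact that $G(n,1/3)$ has minimum degree $\tfrac n3-\Theta(\sqrt{n\log n})$ a.a.s., so this lemma simply records that in the degree game Maker plays as well as the random-graph heuristic predicts.
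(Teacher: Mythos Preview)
Your proposal is correct and matches the paper's treatment: the paper gives no proof at all for this lemma, simply recording it as a corollary of Lemma~6 in~\cite{balogh2009diameter}. Your additional sketch of the underlying potential-function argument and the fair-share heuristic is accurate and goes beyond what the paper provides, but the core ``proof'' in both cases is just the citation.
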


	\medskip
	
	Moreover, we will use the following lemma which allows Maker
	to distribute her elements nicely over all sets of
	a given family $\mathcal{F}$.
	
	\begin{lemma}[Corollary of Lemma 2.3 in~\cite{barkey2023multistage}]\label{lem:criterion.multistage}
		Let $X$ be a set and let $\delta \in (0,1)$. Let $\mathcal{H} = (X, \mathcal{F})$ be a hypergraph, and $k = \min_{F \in \mathcal{F}} |F|$. 
		If $k > 4\delta^{-2}\ln(|\mathcal{F}|)$, 
		then in a $(1:1)$ Maker-Breaker game on $\mathcal{H}$, 
		Maker has a strategy to claim at least 
		$(\frac{1}{2} - \delta)|F|$ elements of every set 
		$F \in \mathcal{F}$.
	\end{lemma}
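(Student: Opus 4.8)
The plan is to run a weight-function (potential) argument in the spirit of the Erd\H{o}s--Selfridge criterion, but set up so that it is the \emph{building} player, Maker, who drives the potential down. We may assume $\delta<\tfrac12$, since otherwise $(\tfrac12-\delta)|F|\le 0$ for every $F$ and there is nothing to prove; and we may assume Maker moves first (if Breaker opens instead, his opening move raises the potential defined below by at most a factor $1+\lambda$, and the rest of the argument goes through verbatim). Fix $\lambda:=2\delta\in(0,1)$ and, throughout the game, assign to every $F\in\mathcal F$ the weight
\[
w(F):=(1+\lambda)^{b_F}\,(1-\lambda)^{m_F},
\]
where $m_F$ and $b_F$ count the elements of $F$ currently claimed by Maker and by Breaker. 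Put $\Phi:=\sum_{F\in\mathcal F}w(F)$, so $\Phi=|\mathcal F|$ at the start, and for a free element $x$ write $W_x:=\sum_{F\ni x}w(F)$ for its current weighted degree. The core of the argument is the invariant $\Phi\le|\mathcal F|$, to be maintained until the game ends.

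Maker's strategy is greedy: on her turn she claims a free element $x$ of maximum weighted degree $W_x$. The first task is to verify the invariant. Claiming such an $x$ multiplies $w(F)$ by $1-\lambda$ for every $F\ni x$, hence lowers $\Phi$ by exactly $\lambda W_x=\lambda\max_z W_z$. Breaker's reply multiplies the weights of the sets through his chosen element by $1+\lambda$, so it raises $\Phi$ by $\lambda$ times the weighted degree of that element; but that degree is now computed with the weights Maker has just shrunk, so it is at most $\max_z W_z$ as measured before Maker's move. Thus a Maker move together with the Breaker move following it changes $\Phi$ by at most $0$, and since Maker moves first, going through the game in such pairs keeps $\Phi\le|\mathcal F|$ throughout.

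It remains to deduce the statement. Suppose, for contradiction, that at the end of the game Maker owns fewer than $(\tfrac12-\delta)|F|$ of the elements of some $F\in\mathcal F$. Every element is then claimed, so $b_F=|F|-m_F>(\tfrac12+\delta)|F|$, and since $(1+\lambda)^{b}(1-\lambda)^{|F|-b}$ is increasing in $b$ we get
\[
w(F)\;\ge\;\Bigl((1+\lambda)^{\frac12+\delta}(1-\lambda)^{\frac12-\delta}\Bigr)^{|F|}.
\]
Writing the bracket as $e^{g(\delta)}$ with $g(\delta)=(\tfrac12+\delta)\ln(1+2\delta)+(\tfrac12-\delta)\ln(1-2\delta)$, one computes $g'(\delta)=\ln\tfrac{1+2\delta}{1-2\delta}$ and $g''(\delta)=\tfrac{4}{1-4\delta^2}\ge 4$, whence $g(\delta)\ge 2\delta^2$. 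So $w(F)\ge e^{2\delta^2|F|}\ge e^{2\delta^2 k}$, and the hypothesis $k>4\delta^{-2}\ln|\mathcal F|$ makes this exceed $|\mathcal F|^{8}\ge|\mathcal F|\ge\Phi$ --- a contradiction. Hence Maker claims at least $(\tfrac12-\delta)|F|$ elements of every $F\in\mathcal F$.

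The one genuinely delicate point --- and the step I expect to need the most care --- is the bookkeeping behind the invariant: one must rule out Breaker profiting by seizing the current heaviest element before Maker can react. The move-pairing is exactly what handles this, since within each pair the Breaker move is weighed against weights Maker has already pulled down, so the pair's net effect is nonpositive. Everything else is a routine estimate. (Alternatively, the statement can simply be quoted, being the one-stage specialisation of \cite[Lemma~2.3]{barkey2023multistage}.)
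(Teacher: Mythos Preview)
Your proof is correct. The paper itself does not prove this lemma at all: it is stated as a black-box corollary of \cite[Lemma~2.3]{barkey2023multistage} and simply invoked later in Subgame~5 of Maker's strategy. So rather than ``taking the same approach as the paper,'' you have supplied a complete, self-contained argument where the paper gives none.

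Your route is the standard biased-potential variant of the Erd\H{o}s--Selfridge method (as in Beck's book), with weights $(1+\lambda)^{b_F}(1-\lambda)^{m_F}$ and $\lambda=2\delta$; the greedy move-pairing showing $\Phi$ is nonincreasing is exactly right, and your Taylor estimate $g(\delta)\ge 2\delta^2$ is clean. This is presumably also what underlies the cited multistage lemma, so in spirit you are reproducing the one-stage case of that reference rather than doing something genuinely different. The benefit of writing it out is that your argument is elementary and transparent; the benefit of the paper's citation is brevity.

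One small point: your parenthetical handling of the Breaker-first case (``raises the potential by at most a factor $1+\lambda$, and the rest goes through verbatim'') is slightly loose at the extreme $|\mathcal F|=1$, where the slack $|\mathcal F|^8/|\mathcal F|$ disappears. This is irrelevant for any nontrivial $\mathcal F$ and certainly for the paper's application (where $|\mathcal F|$ is exponential in $n$), but if you want the statement to hold literally with Breaker opening, either assume $|\mathcal F|\ge 2$ or note that the single-set case is trivial by a direct count.
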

	
	Finally, we will use the following corollary of
	a recent result by 
	Liebenau and Nenadov~\cite{liebenau2022threshold}.
	
	\begin{lemma}[$K_5$-factor game, Corollary of
		Theorem~1.1 in~\cite{liebenau2022threshold}]
		\label{lem:k5}
		There exist constants $c,C>0$ such that the following holds for every large enough integer 
		$n$ divisible by 5. 
		Playing a $(1:b)$ Maker-Breaker game on $K_n$,
		with $b\leq cn^{2/7}$, Maker has a strategy to occupy a 
		spanning $K_5$-factor of $K_n$ within at most $Cn^{12/7}$ rounds.
	\end{lemma}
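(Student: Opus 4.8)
The plan is to deduce this from Maker's side of the clique-factor threshold theorem of Liebenau and Nenadov by a routine bias-monotonicity argument, after observing that once Breaker's bias is of order $n^{2/7}$ the whole game is forced to end within $O(n^{12/7})$ rounds.

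First I would invoke Theorem~1.1 of~\cite{liebenau2022threshold}: for the $K_r$-factor game on $K_n$ it pins down the threshold bias at order $n^{2/(r+2)}$, so for $r=5$ it yields a constant $c>0$ such that, for every large enough $n$ divisible by $5$, Maker has a strategy $\mathcal{S}$ to occupy a spanning $K_5$-factor of $K_n$ in the $(1:b_0)$ Maker-Breaker game, where $b_0:=\lfloor cn^{2/7}\rfloor$. Next I would bound the length of any play of that game: in every round but possibly the last, Breaker claims $b_0$ edges, so after $R$ rounds at least $(R-1)b_0$ edges are claimed; hence $R\le \binom{n}{2}/b_0+1\le (2/c)\,n^{12/7}$ once $n$ is large enough that $b_0\ge \tfrac{1}{2} cn^{2/7}$. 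Setting $C:=2/c$, it follows that Maker, playing $\mathcal{S}$, completes a $K_5$-factor within at most $Cn^{12/7}$ rounds of the $(1:b_0)$ game.

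Finally I would pass from $b_0$ to an arbitrary bias $b\le cn^{2/7}$ (so $b\le b_0$) by the standard imaginary-moves trick: Maker plays the real $(1:b)$ game while running, in lockstep, an auxiliary $(1:b_0)$ game on $K_n$ in which she follows $\mathcal{S}$ and the ``virtual Breaker'' is fed the edges of the real Breaker together with, after each of Breaker's turns, up to $b_0-b$ arbitrary still-free edges chosen by Maker (if fewer than $b_0-b$ remain she takes all of them, which only affects the auxiliary game's final round and leaves it a legal $(1:b_0)$ game). The virtual Breaker always owns a superset of the real Breaker's edges while Maker's edges coincide in the two games, so Maker's auxiliary $K_5$-factor is also a real $K_5$-factor, reached after the same number of Maker moves, i.e.\ within $Cn^{12/7}$ real rounds; this is the assertion. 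I do not expect a real obstacle: the only things needing care are the bookkeeping of the fake-moves reduction and the (non-)issue that the $O(n^{12/7})$ speed requires no ``fast'' version of the Liebenau--Nenadov theorem, since it comes solely from counting the edges Breaker claims.
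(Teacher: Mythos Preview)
Your proposal is correct and follows essentially the same argument as the paper: invoke the Liebenau--Nenadov threshold at bias $\Theta(n^{2/7})$, use the fake-moves trick to pass to any smaller bias, and read off the $O(n^{12/7})$ round count from the total number of edges divided by Breaker's bias. The paper's proof is identical in structure, differing only in cosmetic details (it takes $b^\ast=\lceil cn^{2/7}\rceil$ and $C=1/c$, and cites the fake-moves reduction from~\cite{clemens2012fast} rather than spelling it out).
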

	
	\begin{proof}
		Let $n$ be large enough. By Theorem~1.1 in~\cite{liebenau2022threshold} there is a 
		constant $c>0$ such that
		Maker can occupy a spanning $K_5$-factor of $K_n$
		against a bias $b^*= \lceil cn^{2/7} \rceil$.
		Let $C=\frac{1}{c}$. By the trick of fake moves
		(see e.g.~Lemma 2.4 in~\cite{clemens2012fast}) 
		it follows that Maker can occupy a spanning $K_5$-factor of 
		$K_n$ against any bias $b\leq b^*$
		within $\lceil \binom{n}{2}/(b^*+1) \rceil \leq C n^{12/7}$
		rounds.
	\end{proof}
	
	\medskip

	\subsection{Waiter-Client game tools}
	When describing strategies for Waiter we will make use of
	the following variant of the Erd\H{o}s-Selfridge Criterion.
	
	\begin{theorem}[Corollary 1.4 in~\cite{bednarska2013weight}]
		\label{thm:WC_transversal}
		Consider a $(1:1)$ Waiter-Client game on a hypergraph $(X,\mathcal{F})$ satisfying
		$$\sum_{F\in \mathcal{F}} 2^{-|F| + 1} < 1.$$
		Then Waiter has a strategy to force Client to
		claim at least one element in each of the
		winning sets in $\mathcal{F}$. 
	\end{theorem}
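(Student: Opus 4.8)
The plan is to mirror the weight-function proof of the Erd\H os--Selfridge criterion (Lemma~\ref{lem:ES-criterion}), adjusted to the fact that in a Waiter--Client game it is Waiter who chooses the offered pair but Client who keeps one of its two elements. At any point of the game call a winning set $F\in\mathcal F$ \emph{unhit} if Client has so far claimed no element of $F$, and for an unhit $F$ let $u(F)$ denote the number of its elements that are still unclaimed. Track the potential
\[
\Phi\ :=\ \sum_{F\in\mathcal F\ \text{unhit}}2^{-u(F)+1}.
\]
Initially $\Phi=\sum_{F\in\mathcal F}2^{-|F|+1}<1$ by hypothesis. If $\Phi<1$ holds at some moment, then every still-unhit $F$ satisfies $2^{-u(F)+1}\le\Phi<1$, hence $u(F)\ge 2$; in particular Waiter has not yet lost any set, and there remain at least two unclaimed board elements. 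Consequently, if Waiter can keep $\Phi<1$ for the whole game, then at the end (indeed already once only one unclaimed element is left) there is no unhit set at all, i.e.\ Client has claimed an element of every $F\in\mathcal F$ and Waiter has won. So it suffices to exhibit such a strategy.

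Consider one round from a position with $\Phi<1$ and at least one unhit set; Waiter will offer two unclaimed elements $x\ne y$. For an unclaimed element $z$ write $W_z:=\sum_{F\ \text{unhit},\,z\in F}2^{-u(F)+1}$, and put $W_{xy}:=\sum_{F\ \text{unhit},\,x,y\in F}2^{-u(F)+1}$. If Client keeps $x$ (so $y$ goes to Waiter), the unhit sets through $x$ become hit, removing $W_x$ from $\Phi$, while each unhit set containing $y$ but not $x$ loses one unclaimed element and therefore has its weight doubled, adding $W_y-W_{xy}$; the net change of $\Phi$ is $-W_x+W_y-W_{xy}$. Symmetrically, if Client keeps $y$ the change is $-W_y+W_x-W_{xy}$. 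Since Client will pick whichever keeps $\Phi$ larger, the round changes $\Phi$ by $|W_x-W_y|-W_{xy}$. Hence Waiter maintains $\Phi$ (and the induction) as soon as in every such position he can find unclaimed $x\ne y$ with
\[
|W_x-W_y|\ \le\ W_{xy}.
\]
This is the heart of the matter, and the one place where the Waiter--Client analysis genuinely departs from the Maker--Breaker one.

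It thus remains to prove the following claim: whenever $\Phi<1$ and some winning set is still unhit, such a pair $\{x,y\}$ exists. I would argue by contradiction. If no good pair exists, then first the values $W_z$ over unclaimed $z$ must all be distinct (two equal values would already give a good pair, as $W_{xy}\ge 0$); second, for any unclaimed $x\ne y$ exactly one of $W_x-W_{xy}>W_y$ and $W_y-W_{xy}>W_x$ holds — both cannot, since their sum is $0>2W_{xy}$ — and it is the one with the larger $W$-value, so the relation ``$W_x-W_{xy}>W_y$'' is a transitive tournament coinciding with the linear order of the $W_z$. Listing the unclaimed elements as $r_1,\dots,r_m$ with $W_{r_1}>\dots>W_{r_m}$, this says $W_{r_ir_j}<W_{r_i}-W_{r_j}$ for all $i<j$; since every unhit $F$ has $u(F)\ge 2$ and therefore supplies at least one pair $\{r_a,r_b\}$ of unclaimed elements with $W_{r_ar_b}\ge 2^{-u(F)+1}$, summing these strict inequalities against the weights $2^{-u(F)+1}$ should force $\sum_F 2^{-u(F)+1}\ge 1$, contradicting $\Phi<1$. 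In any case, this statement is exactly Corollary~1.4 of~\cite{bednarska2013weight} and follows from the weight-function machinery developed there. Granting it, Waiter plays, round after round, any unclaimed pair $x\ne y$ with $|W_x-W_y|\le W_{xy}$; by induction on the number of rounds $\Phi$ never exceeds its initial value and hence stays below $1$, which by the first paragraph forces Client to claim an element of each winning set. The main obstacle is precisely the combinatorial claim about the existence of a good pair; the rest is routine bookkeeping.
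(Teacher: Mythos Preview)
The paper does not prove this statement at all: it is quoted verbatim as Corollary~1.4 of~\cite{bednarska2013weight} and used as a black box. So there is no ``paper's own proof'' to compare against; your write-up is an attempt to supply what the paper deliberately outsources.

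Your potential-function framework is exactly the standard one, and your computation of the one-round change
\[
\Delta\Phi \;=\; |W_x-W_y|-W_{xy}
\]
is correct. The reduction to the claim ``whenever $\Phi<1$ and some set is unhit, there exist unclaimed $x\neq y$ with $|W_x-W_y|\le W_{xy}$'' is also fine. The genuine gap is your argument for that claim. After setting up the linear order $W_{r_1}>\cdots>W_{r_m}$ and the inequalities $W_{r_ir_j}<W_{r_i}-W_{r_j}$, you write that ``summing these strict inequalities against the weights $2^{-u(F)+1}$ should force $\sum_F 2^{-u(F)+1}\ge 1$''. This step is not carried out, and it is not clear how to make it work: picking for each unhit $F$ one pair $\{r_a,r_b\}\subset F$ only yields $W_{r_a}-W_{r_b}>2^{-u(F)+1}$, and summing such inequalities does not obviously collapse to a lower bound of $1$ on $\Phi$ (the left-hand sides involve differences of the $W_{r_i}$ with uncontrolled multiplicities and signs). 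Note also that the good pair need \emph{not} contain the element of maximum weight --- for instance with $\mathcal F=\{\{a,b,c\},\{a,d,e\},\{a,f,g\}\}$ one has $\Phi=\tfrac34<1$, but no pair containing $a$ is good while $\{b,c\}$ is --- so any completion of your argument must allow for that.

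Finally, closing the gap by writing ``In any case, this statement is exactly Corollary~1.4 of~\cite{bednarska2013weight}'' is circular: that corollary \emph{is} the theorem you are asked to prove. If you want a self-contained proof, you need to actually establish the existence of the non-increasing pair (the argument in~\cite{bednarska2013weight} does this via a more general weighted criterion); otherwise, simply citing the reference --- as the paper does --- is the honest route.
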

	
	Moreover, we will use that Waiter has a strategy to force
	large pair degrees. 
	
	\begin{lemma} \label{lem:large.common.degree}
		If $\beta\in (0,1)$, then for every large enough integer $n$ 
		the following holds. 
		Let $G$ be a graph on $n$ vertices such that
		for every two vertices $v,w\in V(G)$ 
		there is a set $N_{v,w}$ of at least 
		$\beta n$ common neighbors.
		Playing a $(1:1)$ Waiter-Client game on $G$, Waiter can force Client to claim a graph 
		$C$ that satisfies the following:
		$$
		|N_C(v)\cap N_C(w)\cap N_{v,w}| 
		\geq \frac{\beta n}{500}  \quad 
		\text{ for every }v,w\subset V(G).
		$$
	\end{lemma}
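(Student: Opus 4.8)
The plan is to reduce the statement to two applications of the Waiter--Client analogue of Lemma~\ref{lem:criterion.multistage}, after first splitting the board at random so that the two applications do not interfere with one another.

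\emph{Random splitting.} First I would colour each edge of $G$ independently and uniformly with one of two colours, obtaining a partition $E(G)=E_1\sqcup E_2$. For any ordered pair $(v,w)$ of distinct vertices and any $u\in N_{v,w}$, the events $\{vu\in E_1\}$ and $\{wu\in E_2\}$ are independent and of probability $\tfrac12$ each, and these events are mutually independent over $u\in N_{v,w}$ since they involve pairwise distinct edges; hence $|\{u\in N_{v,w}:vu\in E_1,\ wu\in E_2\}|$ stochastically dominates a $\mathrm{Bin}(\beta n,\tfrac14)$ random variable. By Lemma~\ref{lem:Chernoff} and a union bound over the fewer than $n^2$ ordered pairs, for $n$ large there is a fixed partition with
\[
N^*_{v,w}:=\{u\in N_{v,w}:vu\in E_1,\ wu\in E_2\},\qquad |N^*_{v,w}|\ \ge\ \tfrac{\beta n}{5}\quad\text{for all ordered }(v,w).
\]
Fix such a partition and, for each unordered pair, fix an ordering of its two vertices so that $N^*_{v,w}$ is well defined.

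\emph{Two phases.} Waiter now plays so that in the first phase she offers only pairs of edges lying in $E_1$ (until $E_1$ is exhausted) and in the second phase only pairs lying in $E_2$; this amounts to playing a Waiter--Client game on the board $E_1$ followed by one on the still-untouched board $E_2$. For the first phase I would apply the Waiter--Client version of Lemma~\ref{lem:criterion.multistage} to the hypergraph with board $E_1$ and winning sets $\{\,\{vu:u\in N^*_{v,w}\}:v\ne w\,\}$ (each contained in $E_1$); since every winning set has size at least $\tfrac{\beta n}{5}$ and there are fewer than $n^2$ of them, the hypothesis holds with, say, $\delta=\tfrac16$ for $n$ large, so Waiter forces Client to claim at least a $\tfrac13$-fraction of each. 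Writing $T_{v,w}:=N_C(v)\cap N^*_{v,w}$ after the first phase, we get $|T_{v,w}|\ge\tfrac{\beta n}{15}$, $T_{v,w}\subseteq N_{v,w}$, and $wu\in E_2$ for every $u\in T_{v,w}$. For the second phase (whose winning sets Waiter fixes once $T_{v,w}$ has been revealed) I would apply the same lemma on the board $E_2$ with winning sets $\{\,\{wu:u\in T_{v,w}\}:v\ne w\,\}$, which are contained in $E_2$ and have size at least $\tfrac{\beta n}{15}$; Waiter thus forces $|N_C(w)\cap T_{v,w}|\ge\tfrac13|T_{v,w}|\ge\tfrac{\beta n}{45}$. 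Finally, every $u$ in this last set satisfies $wu\in E(C)$, $vu\in E(C)$ (because $u\in T_{v,w}\subseteq N_C(v)$), and $u\in N_{v,w}$, so $|N_C(v)\cap N_C(w)\cap N_{v,w}|\ge\tfrac{\beta n}{45}\ge\tfrac{\beta n}{500}$; as the left-hand side is symmetric in $v,w$, the chosen ordering does no harm.

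\emph{Main obstacle.} The one ingredient that is not completely routine is the Waiter--Client counterpart of Lemma~\ref{lem:criterion.multistage}: that in a $(1:1)$ Waiter--Client game Waiter can force Client to claim at least $(\tfrac12-\delta)|F|$ elements of every set $F$ in a family all of whose sets are large compared with the logarithm of its cardinality. I expect this to carry the real weight of the lemma. It can be obtained either from the general principle that, for monotone increasing objectives in $(1:1)$ games, Waiter forcing Client is at least as strong as Maker, or directly by running the weight-function argument behind Lemma~\ref{lem:criterion.multistage} in the Waiter--Client setting, precisely as Theorem~\ref{thm:WC_transversal} is the Waiter--Client form of the Erd\H{o}s--Selfridge criterion. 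The random splitting, by contrast, is needed only to guarantee that when the second phase starts the edges $\{wu:u\in T_{v,w}\}$ are still free, so that Client can be forced to ``complete the cherries'' chosen in the first phase; without it a single phase would control $|N_C(v)\cap N_{v,w}|$ and $|N_C(w)\cap N_{v,w}|$ separately only, and inclusion--exclusion then says nothing about the codegree.
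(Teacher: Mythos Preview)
Your overall architecture---random two-colouring of $E(G)$ followed by two phases, the first forcing many $v$-edges into $N^*_{v,w}$ and the second forcing many $w$-edges into the surviving set---is exactly the paper's approach, and your analysis of the splitting and of the final counting is correct.

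The only real divergence is the tool used inside each phase. You invoke a Waiter--Client analogue of Lemma~\ref{lem:criterion.multistage}, flag it as the main obstacle, and offer two justifications. The first justification is not safe: there is no general theorem saying that whatever Maker can achieve in a $(1:1)$ game, Waiter can force on Client; the known comparisons between the two models go through specific criteria, not through monotonicity alone. The second justification (rerun the weight-function argument) would work, but it is extra work you do not need. The paper sidesteps the issue entirely: in each phase it applies Theorem~\ref{thm:WC_transversal} (the Waiter--Client Erd\H{o}s--Selfridge criterion, already stated) to the family of all $0.9$-fraction subsets of the target edge stars. Hitting every such subset forces Client to own more than a $0.1$-fraction of each star; a short computation with $\binom{s}{0.1s}2^{-0.9s}$ verifies the criterion. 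This yields factors $1/10$ in place of your $1/3$ (hence $\beta n/500$ rather than $\beta n/45$), but it uses only tools already on the table. If you replace your appeal to a Waiter--Client version of Lemma~\ref{lem:criterion.multistage} by this ``large-subset'' trick with Theorem~\ref{thm:WC_transversal}, your proof becomes self-contained and coincides with the paper's.
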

	
	\begin{proof}
		Before the game starts, we split the edge set of the
		graph $G$ in order to obtain two graphs $G_1$ and $G_2$ such that
		$$
		|N_{G_1}(v)\cap N_{G_2}(w)\cap N_{v,w}| 
		\geq \frac{\beta n}{5} \quad 
		\text{ for every }v,w\subset V(G).
		$$
		That this is possible can be proven
		by taking a partition $G=G_1\cup G_2$ 
		uniformly at random and then showing 
		with the help of a standard Chernoff argument
		(Lemma~\ref{lem:Chernoff}) that the 
		above holds a.a.s.
		
		\smallskip
		
		Then, for a Stage I, Waiter plays on $G_1$ 
		considering the family
		$$
		\mathcal{F}_1 :=
		\left\{
		A:~
		\begin{array}{l}
			A\subset E_{G_1}(v,N_{G_1}(v)\cap N_{G_2}(w)\cap N_{v,w})
			\text{ for some distinct }v,w\in V(G)\\
			\text{and }|A|=0.9|N_{G_1}(v)\cap N_{G_2}(w)\cap N_{v,w}|
		\end{array}
		\right\}.
		$$
		It holds that
		\begin{align*}
			\sum_{F\in \mathcal{F}_1} 2^{-|F|}
			& \leq \sum_{v,w} 
			\binom{|N_{G_1}(v)\cap N_{G_2}(w)\cap N_{v,w}|}{%
				0.1|N_{G_1}(v)\cap N_{G_2}(w)\cap N_{v,w}|}	
			\cdot 2^{-0.9|N_{G_1}(v)\cap N_{G_2}(w)\cap N_{v,w}|}\\
			& \leq \sum_{v,w} 
			(10e)^{0.1|N_{G_1}(v)\cap N_{G_2}(w)\cap N_{v,w}|}	
			\cdot 2^{-0.9|N_{G_1}(v)\cap N_{G_2}(w)\cap N_{v,w}|}\\
			& < \sum_{v,w} 
			0.8^{|N_{G_1}(v)\cap N_{G_2}(w)\cap N_{v,w}|}	
			\leq n^2 0.8^{0.2\beta n} = o(1).
		\end{align*}
		Thus, by Lemma~\ref{thm:WC_transversal}, Waiter can ensure that
		Client claims an element in each set of $\mathcal{F}_1$.
		By this, it follows that Client's subgraph $C_1\subset G_1$ 
		at the end of Stage I satisfies
		$$
		|N_{C_1}(v)\cap N_{G_2}(w)\cap N_{v,w}| 
		\geq \frac{\beta n}{50} \quad 
		\text{ for every }v,w\subset V(G).
		$$
		
		\smallskip
		
		Afterwards, for a Stage II, Waiter plays on $G_2$ 
		considering the family
		$$
		\mathcal{F}_2 :=
		\left\{
		A:~
		\begin{array}{l}
			A\subset E_{G_2}(w,N_{C_1}(v)\cap N_{G_2}(w)\cap N_{v,w})
			\text{ for some distinct }v,w\in V(G)\\
			\text{and }|A|=0.9|N_{C_1}(v)\cap N_{G_2}(w)\cap N_{v,w}|
		\end{array}
		\right\}.
		$$
		Note that so far, no edge of $G_2$ was claimed.
		It analogously holds that
		$\sum_{F\in \mathcal{F}_2} 2^{-|F|} = o(1)$.
		Thus, by Lemma~\ref{thm:WC_transversal}, Waiter can ensure that
		Client claims an element in each set of $\mathcal{F}_2$.
		By this, it follows that Client's subgraph $C_2\subset G_2$ 
		at the end of Stage II satisfies
		$$
		|N_{C_1}(v)\cap N_{C_2}(w)\cap N_{v,w}| 
		\geq \frac{\beta n}{500} \quad 
		\text{ for every }v,w\subset V(G). 
		$$
		Hence, the statement is proven.
	\end{proof}
	
	Finally, we will use that Waiter can force a perfect matching on $K_{5,5}$. Indeed, the statement below is an easy exercise, and it also follows from Stage~II in the proof of Theorem~2.1 in~\cite{clemens2020fast}.
	
	\begin{lemma}[WC Perfect Matching,~\cite{clemens2020fast}]\label{lem:matching.K55}
		Playing a $(1:1)$ Waiter-Client game on $K_{5,5}$,
		Waiter has a strategy to force a perfect matching
		of $K_{5,5}$.
	\end{lemma}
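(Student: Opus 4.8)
The plan is to have Waiter first build a near‑perfect matching greedily and then complete it by forcing a single augmenting path; the statement also follows from Stage~II of the proof of Theorem~2.1 in \cite{clemens2020fast}, so only a sketch is needed. Write $A=\{a_1,\dots,a_5\}$ and $B=\{b_1,\dots,b_5\}$ for the two sides of $K_{5,5}$. In Stage~1 (four rounds), in round $i\le 4$ Waiter offers two free edges at $a_i$ whose $B$-endpoints lie in $B\setminus\{b_5\}$ and are still unsaturated by Client's matching; for round~$4$ this amounts to offering the two edges at the unique remaining unsaturated $B$-vertex, say $\{a_4b_4,a_5b_4\}$. Whatever Client picks, after relabelling we may assume that at the end of Stage~1 Client holds the matching $N=\{a_ib_i:i\in[4]\}$, that $a_5$ and $b_5$ are unsaturated, and — by the choice of offers — that none of Waiter's four Stage~1 edges is incident to $b_5$, with exactly one such edge incident to each of $a_1,a_2,a_3$ (and one at $a_5$).

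In Stage~2 Waiter forces an $a_5$--$b_5$ augmenting path with respect to $N$ inside Client's graph. First Waiter offers $\{a_5b_5,a_4b_5\}$: if Client takes $a_5b_5$ then $N\cup\{a_5b_5\}$ is a perfect matching in Client's graph and we are done, so assume Client takes $a_4b_5$. Now Waiter keeps an $N$-alternating path $Q$ in Client's graph starting at $a_5$, all of whose non-$N$ edges were forced by Waiter, with $Q=(a_5)$ initially. If $v$ is the endpoint of $Q$, Waiter offers $\{vb_5,vb'\}$ (in the very first extension $vb_5=a_5b_5$ is already Waiter's, so Waiter instead offers two routing edges), where $b'$ is chosen so that $vb'$ is free and the $N$-partner of $b'$ lies in $\{a_1,a_2,a_3\}\setminus V(Q)$ whenever this set is nonempty, and otherwise $b'=b_4$ so that the $N$-partner of $b'$ is $a_4$. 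If Client takes $vb_5$, then $Q+vb_5$ is an augmenting path and flipping $N$ along it gives a perfect matching in Client's graph; if Client takes $vb'$, Waiter appends the $N$-edge from $b'$ to its partner, so $Q$ gains a new vertex of $\{a_1,\dots,a_4\}$. Hence after at most four such rounds either Client has completed a perfect matching, or $Q$ contains all of $a_1,\dots,a_5$ and ends at $a_4$; in the latter case the edge $a_4b_5$, claimed by Client in the first round of Stage~2, closes $Q$ to an $a_5$--$b_5$ augmenting path, and flipping $N$ along it yields a perfect matching in Client's graph. In total at most nine rounds are used, well within the twelve available.

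The step I expect to be the main obstacle is the bookkeeping that guarantees the edges Waiter must offer in Stage~2 are actually free. Here $vb_5$ is free because Waiter claimed no $b_5$-edge in Stage~1 and in Stage~2 only acquires $b_5$-edges at vertices already internal to $Q$; and a routing edge $vb'$ to a fresh vertex exists because $v$ is blocked by at most its single Stage~1 leftover edge while it has two or three candidate routes, the only delicate case being the final extension to $a_4$ through $b_4$. For that case Waiter simply chooses the order in which $a_1,a_2,a_3$ are visited so that the (at most one) of them whose Stage~1 leftover edge lands on $b_4$ is not visited last; since Waiter controls the routing this is always possible, and the whole verification is a routine finite check.
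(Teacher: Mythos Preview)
The paper does not give its own proof of this lemma; it simply calls it an easy exercise and cites Stage~II of the proof of Theorem~2.1 in~\cite{clemens2020fast}. Your self-contained sketch via a near-matching plus a forced augmenting path is a reasonable line of attack and can be made to work, but the bookkeeping claim you single out as ``the main obstacle'' is in fact wrong as stated.

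You assert that at most one of $a_1,a_2,a_3$ has its Stage~1 leftover edge landing on $b_4$, and you rely on this to guarantee that $a_{\sigma(3)}b_4$ is free in the last extension. But look at round~3 of your Stage~1: only two vertices of $B\setminus\{b_5\}$ are still unsaturated, Waiter must offer both, Client's pick is (by your relabelling) $b_3$, and hence Waiter's leftover at $a_3$ is \emph{always} $a_3b_4$. Moreover the round-2 leftover $a_2b_{j_2}$ has $j_2\in\{3,4\}$ and the round-1 leftover $a_1b_{j_1}$ has $j_1\in\{2,3,4\}$; with your underspecified Stage~1 (you do not say \emph{which} two unsaturated endpoints Waiter offers) Client can in fact force $j_1=j_2=4$ as well. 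In that situation every choice of $\sigma(3)\in\{1,2,3\}$ leaves $a_{\sigma(3)}b_4$ already claimed by Waiter, and your final extension cannot be carried out.

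The repair is small but must be stated: in round~2 Waiter should deliberately avoid offering the edge to the $B$-endpoint of $f_1$. This forces $\{j_1,j_2\}=\{3,4\}$, so exactly one index $i\in\{1,2\}$ has $j_i\ne 4$, and in Stage~2 Waiter can steer the path (via the choice of the two routing edges in round~6 and the choice of $b'$ in round~7) so that this index is $\sigma(3)$. One then also checks that the intermediate routing edges $a_{\sigma(1)}b_{\sigma(2)}$ and $a_{\sigma(2)}b_{\sigma(3)}$ are free, which holds because $j_{\sigma(1)},j_{\sigma(2)}=4$ in those cases. So the approach is salvageable, but the ``routine finite check'' you defer does not go through with the strategy exactly as written.
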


	\subsection{Structural properties of trees} 
	When we want to embed spanning trees into some graph, 
	we may first care about a small subtree with 
	suitable properties. For this, the following lemmas
	will turn out to be useful.
	
	\begin{lemma}[Small subtree lemma] \label{lem:small.subtree1}
		Let $k\in\mathbb{N}$ and let $T$ be a tree on $n\geq 2k$ 
		vertices. Then there exists a set-cover $V(T)=V_A\cup V_B$ 
		such that $T[V_A]$ and $T[V_B]$ are trees,
		$|V_A\cap V_B|\leq 1$, and 
		$k\leq |V_A| < 2k$.
	\end{lemma}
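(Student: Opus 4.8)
The plan is to realize $V_A$ as the vertex set of a subtree of $T$ that is attached to the rest of $T$ at a single vertex $v$; once I have found such a $V_A$ with $k\le |V_A|<2k$, I would simply set $V_B:=(V(T)\setminus V_A)\cup\{v\}$, so that $V_A\cap V_B=\{v\}$, $V_A\cup V_B=V(T)$, and the only remaining point would be to check that the two induced subgraphs are trees.

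After disposing of the trivial case $k=1$ (where one can take $V_A=\{u\}$ for any vertex $u$ and $V_B=V(T)$), I would root $T$ at an arbitrary vertex $r$ and, for $u\in V(T)$, write $s(u)$ for the number of vertices of the subtree $T_u$ rooted at $u$, so that $s(r)=n\ge 2k$. Since $\{u:\ s(u)\ge k\}$ is nonempty, I would choose in it a vertex $v$ of \emph{maximum depth}; then every child of $v$, being strictly deeper, has subtree size $<k$. Listing the children of $v$ as $c_1,\dots,c_m$ and setting $\sigma_0:=1$ and $\sigma_i:=1+\sum_{t=1}^{i}s(c_t)$ for $1\le i\le m$, I would use that $\sigma_0=1<k$, that $\sigma_m=s(v)\ge k$, and that consecutive $\sigma_i$ differ by $s(c_i)<k$: the smallest index $j$ with $\sigma_j\ge k$ then satisfies $\sigma_j=\sigma_{j-1}+s(c_j)<k+k=2k$, hence $k\le\sigma_j<2k$. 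Consequently
$$V_A:=\{v\}\cup V(T_{c_1})\cup\dots\cup V(T_{c_j}),\qquad V_B:=\big(V(T)\setminus V_A\big)\cup\{v\}$$
has $|V_A|=\sigma_j\in[k,2k)$, and $V_A\cap V_B=\{v\}$, $V_A\cup V_B=V(T)$.

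Finally, to see that $T[V_A]$ and $T[V_B]$ are trees, I would note that both are acyclic as subgraphs of $T$, so it suffices to argue connectivity. In $T[V_A]$ each $T_{c_i}$ is connected and each $c_i$ is joined to $v$, so $V_A$ is connected. For $T[V_B]$, observe that $V_B=V(T)\setminus\big(V(T_{c_1})\cup\dots\cup V(T_{c_j})\big)$, i.e.\ $V_B$ is obtained from $T$ by pruning the $j$ branches $T_{c_1},\dots,T_{c_j}$ hanging at $v$ while keeping $v$ itself; this leaves $v$ joined to its parent and to the remaining children $c_{j+1},\dots,c_m$, with the rest of $T$ still attached, so $T[V_B]$ is connected as well. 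I expect the only step needing care to be the accumulation argument: it relies on \emph{all} children of $v$ having subtree size below $k$ at once, which is precisely why $v$ must be taken to be a deepest vertex with subtree size at least $k$, and not merely some such vertex.
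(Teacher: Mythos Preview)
Your proof is correct and follows essentially the same approach as the paper's: root the tree, locate a special vertex whose children's subtrees are small, and accumulate a suitable prefix of those subtrees together with the vertex itself to form $V_A$. The only difference is your choice of the special vertex: you take a \emph{deepest} vertex with subtree size at least $k$, forcing all children to have subtree size $<k$ directly, whereas the paper takes a vertex with \emph{smallest} subtree of size at least $2k$, which only guarantees each child's subtree has size $<2k$ and then requires a short case split (either some child already has subtree size in $[k,2k)$, or else all are $<k$ and one accumulates as you do). Your variant is slightly cleaner for avoiding that split; otherwise the arguments coincide.
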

	
	\begin{proof}
		Fix any vertex $r\in V(T)$ as the root of $T$ and orient the edges 
		of $T$ such that every vertex except the root has exactly one 
		ingoing edge. Moreover, for each vertex $v\in V(T)$ 
		denote with $T_v = (V_v,E_v)$ the tree which is induced by all 
		the vertices which can be reached from $v$ by a directed path.
		Now, choose a vertex $w\in V(T)$ such that $T_w$ is a smallest tree
		among all trees $T_v$ with at least $2k$ vertices. 
		Such a tree must exist, as by assumption $|V_r|=n\geq 2k$. 
		Let $w_1,\ldots,w_t$ be all outgoing neighbors of $w$. 
		For each $i\in [t]$ we have that $T_{w_i} \subset T_w$ and hence, 	
		by the choice of $T_w$, we conclude that $|V_{w_i}|<2k$.
		If there exists $i\in [t]$ such that $|V_{w_i}|\geq k$,
		then we can set $V_A:=V_{w_i}$ and $V_B:=V(T)\setminus V_{w_i}$. 
		Otherwise, we have $|V_{w_i}| < k$ for every $i\in [t]$.
		Then let $\ell$ be the smallest integer such that
		$1 + \sum_{i\in [\ell]} |V_{w_i}| \geq k$.
		Such an $\ell$ must exist, as $|V_w|\geq 2k$.
		Moreover, by the sizes of the subtrees $T_{w_i}$, we know that 
		$1 + \sum_{i\in [\ell]} |V_{w_i}| < 2k$. That is, we can choose
		$V_A:=\{w\}\cup \bigcup_{i\in [\ell]} V_{w_i}$ and
		$V_B:=(V(T)\setminus V_A) \cup \{w\}$.
	\end{proof}

	\medskip
	
	\begin{lemma}[Small subtree cover lemma]
		\label{lem:subtree.partition}
		Let $k\in\mathbb{N}$ and let $T$ be a tree.
		Then there exists a set-cover 
		$V(T)=V_1\cup V_2\cup \ldots \cup V_t$ with 
		$t\leq \lceil \frac{v(T)}{k-1} \rceil + 1$ 
		such that the following holds:
		\begin{itemize}
			\item[(i)] $T[V_i]$ is a tree for every $i\in [t]$.
			\item[(ii)] $|V_i|<2k$ for every $i\in t$.
		\end{itemize}
	\end{lemma}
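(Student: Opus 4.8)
The plan is to iterate Lemma~\ref{lem:small.subtree1}, repeatedly slicing off a subtree on between $k$ and $2k-1$ vertices from the current tree until what remains has fewer than $2k$ vertices.

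Formally, I would set $T^{(0)}:=T$ and build a finite sequence of induced subtrees $T^{(0)}\supset T^{(1)}\supset\cdots$ together with the sets $V_1,V_2,\dots$ as follows. Given the induced subtree $T^{(i)}=T[W_i]$ with $W_i:=V(T^{(i)})$: if $|W_i|<2k$, stop, set $t:=i+1$ and $V_t:=W_i$; otherwise $|W_i|\ge 2k$, so Lemma~\ref{lem:small.subtree1} applied to $T^{(i)}$ with parameter $k$ yields a decomposition $W_i=V_A\cup V_B$ with $T^{(i)}[V_A]$ and $T^{(i)}[V_B]$ trees, $|V_A\cap V_B|\le 1$ and $k\le|V_A|<2k$; then set $V_{i+1}:=V_A$ and $T^{(i+1)}:=T^{(i)}[V_B]=T[V_B]$. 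Each such step decreases the number of vertices of the current tree by at least $k-1\ge 1$ (here I use $k\ge 2$, which is implicit in the statement since otherwise $\lceil v(T)/(k-1)\rceil$ is undefined), because $|W_{i+1}|=|V_B|\le|W_i|-|V_A|+|V_A\cap V_B|\le|W_i|-(k-1)$; hence the process terminates.

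The three required properties are then immediate. By construction each $T^{(i)}$ is an induced subtree of $T$, so $T[V_{i+1}]=T^{(i)}[V_A]$ is a tree for $i<t-1$, and $T[V_t]=T^{(t-1)}$ is a tree as well; this gives (i). For (ii), $|V_i|<2k$ holds for $i<t$ by the size bound on $V_A$ in Lemma~\ref{lem:small.subtree1}, and $|V_t|=|W_{t-1}|<2k$ by the stopping rule. That $V(T)=V_1\cup\cdots\cup V_t$ follows by the obvious induction from $W_i=V_{i+1}\cup W_{i+1}$. It remains to bound $t$. If $t=1$ the bound $t\le\lceil v(T)/(k-1)\rceil+1$ is trivial, so assume $t\ge 2$. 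Then Lemma~\ref{lem:small.subtree1} was applied to $T^{(0)},\dots,T^{(t-2)}$, each of which had at least $2k$ vertices, and the vertex count dropped by at least $k-1$ at each of these $t-2$ steps, so
$$2k\le|W_{t-2}|\le v(T)-(t-2)(k-1).$$
This gives $t-2\le\frac{v(T)-2k}{k-1}<\frac{v(T)}{k-1}$, and since $t-2$ is an integer we conclude $t-2\le\lceil v(T)/(k-1)\rceil-1$, i.e. $t\le\lceil v(T)/(k-1)\rceil+1$, as required.

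I do not expect a genuine obstacle here; the only point that needs a little care is the bookkeeping on the number of parts together with the treatment of the final piece $V_t$, which is handled cleanly by always keeping the leftover tree $T^{(i)}[V_B]$ as an honest induced subtree of $T$ to which Lemma~\ref{lem:small.subtree1} can be reapplied.
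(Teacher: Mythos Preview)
Your argument is correct and is essentially the iterative unrolling of the paper's inductive proof: both repeatedly apply Lemma~\ref{lem:small.subtree1} to peel off a subtree of size between $k$ and $2k-1$, with the same bookkeeping that each step removes at least $k-1$ vertices. The only cosmetic difference is that the paper phrases the bound on $t$ via the inductive hypothesis applied to $T[V_B]$, while you track the drop in $|W_i|$ explicitly; the arithmetic is equivalent.
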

	
	\begin{proof}
		We do an induction on $v(T)$.
		If $v(T)<2k$, there is nothing to do, as we can set 
		$t=1$ and $V_1=V(T)$. So, let $v(T)\geq 2k$. 
		Then by Lemma~\ref{lem:small.subtree1} we can find 
		a set-cover $V(T)=V_A\cup V_B$ such that 
		$T[V_A],T[V_B]$ are trees, $|V_A\cap V_B|\leq 1$, 
		and $k\leq |V_A| < 2k$. In particular, $|V_B|\leq v(T)-k+1$.
		We set $V_1:=V_A$ and by induction 
		we can find a set-cover $V_B=V_2\cup \ldots \cup V_t$ 
		with
		$t\leq \left( \lceil \frac{v(T)-k+1}{k-1} \rceil + 1\right) + 1
		= \lceil \frac{v(T)}{k-1} \rceil + 1$
		and such that $T[V_i]$ is a tree with $|V_i|<2k$ 
		for every $i\in\{2,3,\ldots,t\}$.
		Putting everything together, we obtain a set-cover
		$V(T)=V_1\cup V_2\cup \ldots \cup V_t$ as required.
	\end{proof}

	\begin{lemma}[Lemma 2.1~in~\cite{krivelevich2010embedding}]
		\label{lem:leaves.and.paths.kriv}
		Let $k,\ell,n>0$ be integers.
		Let $T$ be a tree on $n$ vertices with at most $k$ leaves.
		Then $T$ contains a collection of at least
		$\frac{n-(2k-2)(\ell+1)}{\ell+1}$ 
		vertex-disjoint bare paths of length $\ell$.
	\end{lemma}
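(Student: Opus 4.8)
The plan is to break $T$ into a bounded number of maximal bare paths and then carve each of them greedily into length-$\ell$ pieces. We may assume $n\ge 2$, so $T$ has at least two leaves. First I would bound the number of branch vertices of $T$. Write $L$ for the number of leaves and $B$ for the number of vertices of degree at least $3$. From $\sum_{v\in V(T)}(d_T(v)-2)=2(n-1)-2n=-2$, together with the facts that leaves contribute $-1$, degree-$2$ vertices contribute $0$, and higher-degree vertices contribute at least $1$ each, we get $-2\ge -L+B$, i.e.\ $B\le L-2\le k-2$. Hence the set $W$ consisting of all leaves and all branch vertices of $T$ has $|W|=L+B\le 2k-2$.

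Next I would note that $T-W$ is a vertex-disjoint union of paths, and each such path is a bare path of $T$, since every vertex of $T-W$ has degree $2$ in $T$ and in particular so do the inner vertices of these path-components. To bound the number of components, consider the tree $T'$ obtained from $T$ by replacing every maximal path whose interior consists of degree-$2$ vertices by a single edge; then $V(T')=W$, $T'$ has no vertex of degree $2$, and so $e(T')=|W|-1\le 2k-3$. The components of $T-W$ are precisely the (possibly empty) interiors of these maximal paths, one for each edge of $T'$, so $T-W$ has at most $2k-3$ components. Call them $P_1,\dots,P_r$ with $r\le 2k-3$, and set $m_j:=|V(P_j)|$, so that $\sum_{j=1}^{r}m_j=n-|W|\ge n-(2k-2)$.

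Finally, from each path $P_j$ I would cut off $\lfloor m_j/(\ell+1)\rfloor$ consecutive subpaths on $\ell+1$ vertices each. Every such subpath has all of its vertices (in particular its inner vertices) of degree $2$ in $T$, hence is a bare path of $T$ of length $\ell$; and since the $P_j$ are vertex-disjoint, so is the whole collection. Using $\lfloor x/(\ell+1)\rfloor\ge (x-\ell)/(\ell+1)$ for every integer $x\ge 0$, the number of bare paths so obtained is at least
$$\sum_{j=1}^{r}\left\lfloor\frac{m_j}{\ell+1}\right\rfloor\;\ge\;\sum_{j=1}^{r}\frac{m_j-\ell}{\ell+1}\;=\;\frac{(n-|W|)-\ell r}{\ell+1}\;\ge\;\frac{n-(2k-2)-\ell(2k-3)}{\ell+1}\;\ge\;\frac{n-(2k-2)(\ell+1)}{\ell+1},$$
where the last step uses $(2k-2)+\ell(2k-3)\le (2k-2)+\ell(2k-2)=(2k-2)(\ell+1)$. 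This gives the claimed bound.

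The only genuinely delicate point is the bookkeeping in the middle paragraph: correctly bounding the number of bare-path components of $T-W$, which is why I pass through the homeomorphically reduced tree $T'$ rather than trying to track components as vertices of $W$ are deleted one at a time. Everything else is elementary counting, and degenerate configurations (for instance $T-W$ empty, or the right-hand side already non-positive) cause no trouble, since a count of vertex-disjoint paths is automatically a non-negative integer.
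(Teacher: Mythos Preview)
Your proof is correct. The paper does not give a proof of this lemma; it simply quotes it as Lemma~2.1 in~\cite{krivelevich2010embedding}. Your argument is the standard one: bound the number of leaves plus branch vertices by $2k-2$ via the identity $\sum_v(d_T(v)-2)=-2$, observe that deleting this set $W$ leaves at most $|W|-1\le 2k-3$ bare-path components (one for each edge of the homeomorphically reduced tree on $W$), and then greedily chop these paths into length-$\ell$ pieces. The arithmetic in your final display is clean, and passing through the reduced tree $T'$ is indeed the tidiest way to bound the number of components rather than tracking deletions one at a time.
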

	
	\begin{corollary}
		\label{cor:leaves.and.paths}
		Let $\ell$ be a positive integer.
		Then there exists a constant $\gamma'>0$
		such that the following holds.
		Every tree $T$ has at least $ \gamma' v(T) $ leaves
		or a collection of at least $ \gamma' v(T) $
		vertex-disjoint bare paths of length $\ell$ each.
	\end{corollary}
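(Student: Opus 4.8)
The plan is to derive this from Lemma~\ref{lem:leaves.and.paths.kriv} by splitting into two cases according to whether $T$ has many leaves. Fix $\ell$ and set $k:=\lceil v(T)/2 \rceil$ as a threshold for ``many leaves'' (any constant fraction would do). If $T$ has at least $k$ leaves, then $|L(T)|\geq v(T)/2$, so $\gamma'=1/2$ already works for the first alternative.

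Otherwise $T$ has fewer than $k$ leaves, and I would apply Lemma~\ref{lem:leaves.and.paths.kriv} with this $k$ to obtain a collection of at least
\[
\frac{n-(2k-2)(\ell+1)}{\ell+1}
\]
vertex-disjoint bare paths of length $\ell$, where $n=v(T)$. The point is that this bound is \emph{not} linear in $n$ with the naive choice $k=n/2$, so the actual work is to pick $k$ more cleverly. I would instead fix a small constant $\alpha\in(0,1)$ (depending on $\ell$), set the leaf threshold to be $k:=\lceil \alpha n\rceil$, and argue: either $|L(T)|\geq \alpha n$, giving the first alternative with $\gamma'\leq \alpha$; or $|L(T)|<\alpha n$, in which case Lemma~\ref{lem:leaves.and.paths.kriv} yields at least
\[
\frac{n-(2\alpha n-2)(\ell+1)}{\ell+1}
\geq \frac{n(1-2\alpha(\ell+1))}{\ell+1}
\]
vertex-disjoint bare paths of length $\ell$. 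Choosing $\alpha$ small enough that $1-2\alpha(\ell+1)\geq \tfrac12$, say $\alpha=\tfrac{1}{4(\ell+1)}$, this is at least $\tfrac{n}{2(\ell+1)}$. Setting $\gamma':=\min\{\alpha,\tfrac{1}{2(\ell+1)}\}=\tfrac{1}{4(\ell+1)}$ then works in both cases.

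The only mild subtlety is handling small trees: Lemma~\ref{lem:leaves.and.paths.kriv} is vacuous when $n$ is tiny (the guaranteed count can be negative or zero), but for $v(T)$ bounded by a constant depending on $\ell$ one can shrink $\gamma'$ further so that $\gamma' v(T)<1$, making the statement trivially true since a tree always has at least one leaf. So there is no real obstacle here — the lemma is essentially a clean corollary once one resists the temptation to set the threshold $k$ at $n/2$ and instead balances the two cases by taking $k$ proportional to $n/(\ell+1)$.
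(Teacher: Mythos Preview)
Your final argument is correct and essentially identical to the paper's: both set $\gamma'=\tfrac{1}{4(\ell+1)}$, split according to whether $|L(T)|\geq \gamma' v(T)$, and in the few-leaves case invoke Lemma~\ref{lem:leaves.and.paths.kriv} to obtain at least $\bigl(\tfrac{1}{\ell+1}-2\gamma'\bigr)v(T)=\tfrac{v(T)}{2(\ell+1)}>\gamma' v(T)$ bare paths. The paper simply takes $k$ to be the actual number of leaves rather than a ceiling threshold, which streamlines the arithmetic slightly; your digression about small trees is unnecessary, since any tree on at least two vertices already has at least two leaves, which exceeds $\gamma' v(T)$ whenever $v(T)\leq 8(\ell+1)$.
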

	
	\begin{proof}
		Set $\gamma'=\frac{1}{4(\ell +1)}$. Let $T$ be any tree.
		If the number of leaves in $T$ is $k<\gamma'v(T)$, then by 
		Lemma~\ref{lem:leaves.and.paths.kriv} there are at least	
		$$
		\frac{v(T)-(2k-2)(\ell+1)}{\ell+1}
		> \frac{v(T)}{\ell+1} - 2k
		> \left(\frac{1}{\ell+1} - 2\gamma'\right)v(T)
		> \gamma'v(T)	
		$$ 
		bare paths of length $\ell$.
	\end{proof}
	
	\smallskip

	\begin{lemma}[Classifying trees lemma]\label{lem:classify.trees}
		For every $\ell \in \mathbb{N}$, $\delta\in (0,1)$ and 
		$C \in \mathbb{N}$ there exist constants $\gamma, c\in (0,1)$
		such that the following is true for every large enough $n$.
		Let $T$ be a tree on $n$ vertices with maximum degree 
		$\Delta(T) \leq \frac{cn}{\log(n)}$.
		Let $L(T)$ denote the set of leaves of $T$. 
		Then at least one of the following properties hold:
		\begin{enumerate}
			\item[(i)] $T$ has at least $\gamma n$ vertex-disjoint 
			bare paths of length $\ell$.
			\item[(ii)] $|L(T)| \geq C \gamma n$ and 
			there is a tree $T'\subset T$ with
			$v(T')\leq \delta n$ and 
			$|V(T')\cap N_T(L(T))| \geq C \log(n)$.
		\end{enumerate}
	\end{lemma}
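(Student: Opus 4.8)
The plan is to dichotomize on the number of leaves of $T$, using the threshold $C\gamma n$ where $\gamma$ will be chosen so that Corollary~\ref{cor:leaves.and.paths} applies. If $|L(T)| < C\gamma n$, then $T$ has few leaves, and I would like to deduce property (i) directly. Applying Corollary~\ref{cor:leaves.and.paths} with the given $\ell$ yields a constant $\gamma'$ such that $T$ has either $\gamma' n$ leaves or $\gamma' n$ vertex-disjoint bare paths of length $\ell$; so if I set $\gamma := \gamma'/(C+1)$ (say), then $|L(T)| < C\gamma n < \gamma' n$ forces the bare-path alternative, giving $\gamma' n \geq \gamma n$ disjoint bare paths of length $\ell$, which is (i). So the real work is the complementary case $|L(T)| \geq C\gamma n$, where I must produce the small subtree $T'$ in (ii).

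In the case $|L(T)| \geq C\gamma n$, the first half of (ii) is immediate, and I need a tree $T' \subset T$ with $v(T') \leq \delta n$ and $|V(T') \cap N_T(L(T))| \geq C\log(n)$. The idea is to use the \emph{Small subtree lemma} (Lemma~\ref{lem:small.subtree1}): apply it with parameter $k := \lceil \delta n / 2 \rceil$ (so $2k \leq \delta n$ eventually, since $n$ large) to get a set-cover $V(T) = V_A \cup V_B$ with $T[V_A], T[V_B]$ trees and $k \leq |V_A| < 2k \leq \delta n$. Then $T' := T[V_A]$ is automatically a tree on at most $\delta n$ vertices. The only thing left to check is that $T'$ meets the set $N_T(L(T))$ of ``leaf-neighbors'' in at least $C\log(n)$ vertices — and here I would want to iterate. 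If $T[V_A]$ does not contain enough leaf-neighbors, I instead run through the \emph{Small subtree cover lemma} (Lemma~\ref{lem:subtree.partition}) with $k = \lceil \delta n/2\rceil$: this produces a set-cover $V(T) = V_1 \cup \dots \cup V_t$ with $t \leq \lceil v(T)/(k-1)\rceil + 1 \leq 3/\delta + 2 =: t_0$ (a constant), each $T[V_i]$ a tree on fewer than $2k \leq \delta n$ vertices. Since $N_T(L(T))$ is covered by $V_1 \cup \dots \cup V_t$, some part $V_i$ satisfies $|V_i \cap N_T(L(T))| \geq |N_T(L(T))| / t_0$, and I take $T' := T[V_i]$.

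It remains to bound $|N_T(L(T))|$ from below. Every leaf of $T$ has a unique neighbor, and each vertex $v$ is the neighbor of at most $d_T(v) \leq \Delta(T) \leq cn/\log(n)$ leaves, so
\[
|N_T(L(T))| \;\geq\; \frac{|L(T)|}{\Delta(T)} \;\geq\; \frac{C\gamma n}{cn/\log(n)} \;=\; \frac{C\gamma}{c}\,\log(n).
\]
Combining with the factor $1/t_0$ lost above, $T'$ meets $N_T(L(T))$ in at least $\frac{C\gamma}{c\,t_0}\log(n)$ vertices. So I just need $\frac{C\gamma}{c\,t_0} \geq C$, i.e. $c \leq \gamma / t_0$; since $\gamma$ and $t_0 = 3/\delta + 2$ are already fixed (depending only on $\ell$, $\delta$, $C$), this merely fixes the constant $c \in (0,1)$, completing the choice of constants.

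\textbf{The main obstacle} is bookkeeping the order of quantifiers: $\gamma$ must be chosen (via Corollary~\ref{cor:leaves.and.paths}, hence depending only on $\ell$) \emph{before} $c$, and then $c$ is forced small enough both so that $\Delta(T) \leq cn/\log(n)$ gives the leaf-neighbor bound above and so that $c \leq \gamma/t_0$. One should double-check that $\gamma$ and $c$ land in $(0,1)$ and that ``large enough $n$'' absorbs the ceilings (e.g. $2\lceil \delta n/2\rceil \leq \delta n + 2 \leq \delta' n$ is fine after relaxing $\delta$ slightly, or one simply uses $k = \lfloor \delta n/2 \rfloor$ and notes $2k < \delta n$). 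No single step is hard; the care is entirely in the constant-chasing and in noting that applying Lemma~\ref{lem:subtree.partition} rather than Lemma~\ref{lem:small.subtree1} gives the needed ``some part is large'' pigeonhole for free.
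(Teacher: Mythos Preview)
Your proof is correct and follows essentially the same route as the paper: invoke Corollary~\ref{cor:leaves.and.paths} to handle the few-leaves case via bare paths, and in the many-leaves case bound $|N_T(L(T))|\geq |L(T)|/\Delta(T)$, apply the Small subtree cover lemma (Lemma~\ref{lem:subtree.partition}) with $k\approx \delta n/2$, and pigeonhole to locate a subtree $T'$ with enough leaf-neighbors. The paper frames the argument as ``assume (i) fails, deduce (ii)'' rather than your explicit dichotomy on $|L(T)|$, and uses slightly different constants ($\gamma=\gamma'/(2C)$, $k=\lfloor 0.4\delta n\rfloor$), but the substance is identical.
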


	\begin{proof}
		Having $\ell,\delta$ and $C$ fixed, we first choose 
		$\gamma' = \gamma'(\ell)$ according to 
		Corollary~\ref{cor:leaves.and.paths}. We then set
		$c = \frac{\gamma'\delta}{10C}$ and $\gamma = \frac{\gamma'}{2C}$.
		Now, let $T$ be a tree on $n$ vertices with maximum degree 
		$\Delta(T) \leq \frac{cn}{\log(n)}$. Provided $n$ is large enough, 
		we show that if $T$ does not satisfy (i), 
		then property (ii) must hold. 
		
		Since (i) does not hold and $\gamma' > \gamma$, there is 
		no collection of at least $\gamma' n$ vertex-disjoint bare paths 
		of length $\ell$ in $T$. Hence, by 
		Corollary~\ref{cor:leaves.and.paths} we can conclude 
		$|L(T)| \geq \gamma' n$, which in particular gives 
		$$
		|L(T)|\geq C\gamma n
		\quad \text{and} \quad
		|N_T(L(T))| \geq \frac{|L(T)|}{\Delta(T)} 
		\geq \gamma' c^{-1} \log(n),
		$$
		since $\Delta(T)\leq \frac{cn}{\log(n)}$.
		Now, we apply Lemma~\ref{lem:subtree.partition} 
		to the tree $T$ with $k:=\lfloor 0.4\delta n \rfloor$, 
		and we find a set-cover
		$V(T)=V_1\cup V_2\cup \ldots \cup V_t$ such that 
		$t\leq \lceil \frac{n}{k-1} \rceil +1 < 5\delta^{-1}$, 
		and $T[V_i]$ is a tree with
		$|V_i| < 2k < \delta n$ for all $i\in [t]$. 
		By the Pigeonhole Principle there must exist $i^\ast\in [t]$
		such that
		$$
		|V_{i^\ast}\cap N_T(L(T))|\geq \frac{|N_T(L(T))|}{t}
		\geq  0.2 \gamma' c^{-1} \delta \log(n)
		\geq C\log(n)
		$$
		by the choice of $c$. Hence, (ii) follows by setting 
		$T':=T[V_{i^\ast}]$.
	\end{proof}

	\medskip
	
	\subsection{Tree embedding lemmas} 
	For the embedding of almost spanning trees, we may use
	the following variant of an embedding result due to 
	Haxell~\cite{haxell2001tree}.

	\begin{lemma}[Embedding almost spanning trees; 
		variant of Theorem 1 in~\cite{haxell2001tree}]
		\label{lem:haxell}
		Let $T$ be a tree with maximum degree $d$, 
		and let $S\subset T$ be a subtree of $T$. Moreover, 
		let $G$ be a graph and let $g:V(S)\rightarrow V(G)$ be an 
		embedding of $S$ into $G$. Assume that the following properties 
		hold for some $k \in \mathbb{N}$:
		\begin{enumerate}
			\item[(P1)] $|N_G(X)\setminus g(V(S))| \geq d|X| + 1$ 
			for every $X\subset V(G)$ with $1\leq |X|\leq 2k$,
			\item[(P2)] $|N_G(X)| \geq d|X| + v(T)$ for every
			$X\subset V(G)$ with $k<|X|\leq 2k$.
		\end{enumerate}
		Then the embedding $g$ can be extended to an embedding of $T$ 
		into $G$.
	\end{lemma}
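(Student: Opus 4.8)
I would prove this by a Friedman--Pippenger/Haxell-style online embedding: process the vertices of $V(T)\setminus V(S)$ one at a time, placing each new vertex greedily when that is possible and otherwise making room by an alternating-path rerouting, with (P1) controlling the rerouting search while it is small and (P2) shutting it down once it grows to medium size. First I would fix a root $r\in V(S)$ of $T$ and order $V(T)\setminus V(S)$ as $w_1,\dots,w_m$ so that for every $i$ the neighbour $\pi(w_i)$ of $w_i$ towards $r$ already lies in $V(S)\cup\{w_1,\dots,w_{i-1}\}$; writing $T_i:=T[\,V(S)\cup\{w_1,\dots,w_i\}\,]$, this gives $T_0=S\subsetneq T_1\subsetneq\dots\subsetneq T_m=T$ with $v(T_i)=v(S)+i$, in particular $v(T_i)<v(T)$ for $i<m$. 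I would then build embeddings $f_0=g,f_1,\dots,f_m$ of $T_0,\dots,T_m$ into $G$, all agreeing with $g$ on $V(S)$, maintaining the invariant
\[
  \bigl|\,N_G(X)\setminus f_i(V(T_i))\,\bigr|\ >\ \sum_{x\in X}\bigl(d-d_{T_i}(f_i^{-1}(x))\bigr)
  \qquad\text{for all }X\subseteq V(G)\text{ with }1\le|X|\le 2k,
\]
where an unused vertex $x$ is taken to contribute the full summand $d$. Since the right-hand side is at most $d|X|$, property (P1) gives this invariant for $f_0=g$.

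For the step from $f_{i-1}$ to $f_i$ I would try to place $w_i$ on an unused neighbour of $u_0:=f_{i-1}(\pi(w_i))$; if some such choice keeps the invariant I take it, and otherwise I grow an alternating search set $P\subseteq V(G)$, starting from $P=\{u_0\}$, by repeatedly doing the following: if $N_G(P)$ contains an unused vertex, stop; else pick a used vertex $f_{i-1}(x)\in N_G(P)$ whose preimage $x$ is a leaf of $T_{i-1}$ outside $V(S)$ with $f_{i-1}(\pi(x))\notin P$, and add $f_{i-1}(\pi(x))$ to $P$. I would add these parent-images one at a time, so $|P|$ can never jump past $2k$, and regard $u_0$ itself as a dead end, since $\pi(w_i)$ is about to acquire a child. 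This ends in one of three ways. (a) An unused $z^\ast\in N_G(P)$ appears: tracing the alternating path back from $z^\ast$ to $u_0$ and sliding each leaf on it one step along the path vacates a neighbour of $u_0$, where I place $w_i$, defining $f_i$. (b) $P$ stabilises with $|P|\le 2k$ and $N_G(P)$ entirely used: impossible, since the invariant applied to $X=P$ (its right-hand side being nonnegative) already forces an unused vertex in $N_G(P)$. (c) $|P|$ first reaches a size in $(k,2k]$ with $N_G(P)$ entirely used: impossible, since (P2) then gives $|N_G(P)|\ge d|P|+v(T)>v(T_{i-1})=|f_{i-1}(V(T_{i-1}))|$. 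Hence (a) always occurs, $f_i$ is well defined, and after $m$ steps $f_m$ embeds $T=T_m$ and extends $g$.

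The hard part will be twofold. First, one must re-establish the invariant after a rerouting step: the rerouting only turns one extra vertex $z^\ast$ into a used vertex and slides a chain of current leaves whose tree-degrees do not change, so this is a finite check over sets $X$ of size at most $2k$, but because the invariant carries only a single unit of slack, one has to choose the endpoint $z^\ast$ (equivalently, the alternating path) carefully, exactly as in Haxell's argument, so that no set $X$ which is currently tight loses its last free neighbour --- and one must argue that such a good endpoint always exists whenever any unused vertex is reachable. Second, the rerouting must only ever move vertices that are currently leaves of $T_{i-1}$ (so that no already-embedded edge is destroyed) and must never touch $g(V(S))$; this is precisely why the relevant hypothesis for small search sets is (P1), stated for $N_G(X)\setminus g(V(S))$ --- the vertices of $g(V(S))$ are immovable and cannot be relied upon --- whereas (P2) is needed only once $P$ has grown to medium size, where it is strong enough that $N_G(P)$ cannot be covered by the fewer than $v(T)$ currently used vertices at all, and growing $P$ one vertex at a time guarantees that neither hypothesis is ever needed for sets larger than $2k$.
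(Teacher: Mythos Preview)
Your proposal follows essentially the same route as the paper, which itself gives only a sketch: the paper verifies that (P1) and (P2) yield Haxell's conditions (0)--(2), observes that $g$ is what Haxell calls a Type-1 embedding, and then invokes Claims~3 and~4 of \cite{haxell2001tree} verbatim for the extension (first growing $S$ to $T$ minus its leaves via Claim~3, then attaching the remaining leaves via Claim~4).

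Your outline of the underlying Friedman--Pippenger mechanism has one glitch worth flagging: the stopping rule ``if $N_G(P)$ contains an unused vertex, stop'' fires immediately at $P=\{u_0\}$, since your invariant already guarantees such a vertex for every $P$ with $|P|\le 2k$; so as written cases~(b) and~(c) are vacuous and case~(a) with $P=\{u_0\}$ collapses to the very greedy step you assumed had just failed to preserve the invariant. In Haxell's actual argument one does not search for \emph{any} unused vertex but instead grows a \emph{tight} set (one where the invariant holds with equality), uses (P2) to show that tight sets have size at most $k$, and then argues within size $\le 2k$ that a suitable rerouting exists --- this is the content of Haxell's Claim~3. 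You correctly identify exactly this as the hard part and defer to Haxell for it, which is precisely what the paper does too, so the proposal is fine as a sketch at the paper's own level of detail.
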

	
	\begin{proof}[Sketch of proof.]
		Let $T_0=\varnothing\subset T_1\subset T$ such that 
		$S\subset T_1$ and $T_1$ is a tree which can be obtained from $T$ 
		by removing leaves only. We first note that (P1) and (P2) 
		imply the properties (0)--(2) from Theorem 1 in~\cite{haxell2001tree}, 
		when we set $\ell:=1$, $d_1:=d$.
		Moreover, (P1) implies that
		\begin{align*}
			|N_G(X)\setminus g(V(S))| 
			& \geq d_1|X\cap g(V(S))| + d_1|X\setminus g(V(S))| \\
			& \geq \sum_{x\in X\cap g(V(S))} \big( d_T(g^{-1}(x)) -
			d_S(g^{-1}(x) \big)  + d_1|X\setminus g(V(S))| 
		\end{align*}
		holds for every $X\subset V(G)$ such that $|X|\leq 2k$.
		Because of this, in the proof of Theorem 1 
		in~\cite{haxell2001tree}, the embedding $g$ would be called 
		a Type-1 embedding of $S$ into $G$. Now, let 
		$S=:S_0\subset S_1\subset S_2\subset \ldots \subset S_r:=T_1$ be 
		any sequence of trees such that $S_{i+1}$ is obtained from $S_i$ 
		by attaching one new leaf. By Claim 3 in~\cite{haxell2001tree} it 
		follows iteratively that for every $i\in[r]$ we can extend $g$ to 
		a Type-1 embeddings of $S_i$ into $G$.
		Moreover, by Claim 4 in~\cite{haxell2001tree} (applied with $\ell=1$) it follows that 
		the Type-1 embedding of $S_r=T_1$ can be extended to an 
		embedding of $T$ into $G$. Note that the proofs of these claims 
		only use the fact that $G$ satisfies the properties (0)-(2).
	\end{proof}
	
	Finally, we will use the following lemma, which 
	is helpful for finishing 
	the embedding of spanning trees with many leaves.
	This lemma is a consequence of a generalization
	of Hall's Marriage Theorem.
	
	\begin{lemma}[Star matching lemma, Lemma 3.10 in \cite{johannsen2013expanders}]
		\label{lem:star-matching}
		Let $d,m \in \mathbb{N}$ and let $G$ be a graph. Suppose that two 
		disjoint sets $U,W \subset V(G)$ satisfy the following three 
		conditions:
		\begin{enumerate}
			\item[(i)] $|N_G(X) \cap W| \geq d|X|$ for all 
			$X \subseteq U$ with $1 \leq |X| \leq m$,
			\item[(ii)] $e_G(X,Y) > 0$ for all $X \subseteq U$ and 
			$Y \subseteq W$ with $|X| = |Y| \geq m$,
			\item[(iii)] $|N_G(w) \cap U| \geq m$ for all $w \in W$.
		\end{enumerate}
		Then, for every map $k : U \rightarrow \{1,\dots,d\}$ that 	
		satisfies $\sum_{u \in U} k(u) = |W|$, the set $W$ can be 
		partitioned into $|U|$ disjoint subsets $\{W_u\}_{u \in U}$ 
		satisfying $|W_u| = k(u)$ and $W_u \subseteq N_G(u) \cap W$. 
	\end{lemma}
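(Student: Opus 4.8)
The plan is to encode the desired partition of $W$ as a perfect matching in an auxiliary bipartite graph obtained by ``cloning'' the vertices of $U$ according to $k$, and then to verify Hall's condition from (i)--(iii).

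First I would build the bipartite graph $H$ with parts $\widetilde U$ and $W$: for each $u\in U$ introduce $k(u)$ clones $u^{(1)},\dots,u^{(k(u))}$, let $\widetilde U$ be the set of all clones, and join $u^{(j)}$ to every vertex of $N_G(u)\cap W$. Since $|\widetilde U|=\sum_{u\in U}k(u)=|W|$, a perfect matching of $H$ is exactly a matching saturating $W$; and given such a matching, setting $W_u$ to be the set of those $w\in W$ matched to a clone of $u$ yields $|W_u|=k(u)$, $W_u\subseteq N_G(u)\cap W$, and a partition of $W$, as required. So it is enough to show that $H$ has a perfect matching, and by Hall's marriage theorem it suffices to check that $|N_H(B)|\ge|B|$ for every $B\subseteq W$, where $|N_H(B)|=\sum_{u\in N_G(B)\cap U}k(u)$.

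To verify this I would argue by contradiction. Suppose $B\subseteq W$ violates it, and put $A:=N_G(B)\cap U$ and $A':=U\setminus A$, so that $\sum_{u\in A}k(u)<|B|$. Since $k\ge1$ everywhere, (iii) gives $N_G(w)\cap U\subseteq A$ and $|N_G(w)\cap U|\ge m$ for every $w\in B$, so $|A|\ge m$ (we may assume $B\neq\varnothing$), whence $|B|>\sum_{u\in A}k(u)\ge|A|\ge m$. As there is no $G$-edge between $B$ and $A'$ we get $N_G(A')\cap W\subseteq W\setminus B$, so
\[
|N_G(A')\cap W|\;\le\;|W|-|B|\;<\;|W|-\sum_{u\in A}k(u)\;=\;\sum_{u\in A'}k(u)\;\le\;d\,|A'|,
\]
where $A'\neq\varnothing$ (otherwise $\sum_{u\in A}k(u)=|W|\ge|B|$). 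By the contrapositive of (i), $|N_G(A')\cap W|<d|A'|$ forces $|A'|>m$. Now choosing $X\subseteq A'$ and $Y\subseteq B$ with $|X|=|Y|=m$, the absence of edges between $A'$ and $B$ gives $e_G(X,Y)=0$, contradicting (ii). Hence Hall's condition holds, $H$ has a perfect matching, and the lemma follows.

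The computations are routine; the points needing care are, first, the observation that after cloning the two sides of $H$ have equal size, so that a single Hall condition imposed on the $W$-side already produces a \emph{perfect} matching, and second, checking that the three hypotheses are precisely tailored to exclude every size regime of a would-be violating pair $(B,A)$: (iii) keeps $A$ from being too small, (i) handles the complement $A'$ when it is small, and (ii) handles $A'$ when it is large. I expect this last bit of bookkeeping --- ensuring that no case slips through --- to be the main (though modest) obstacle.
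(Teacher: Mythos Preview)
Your proof is correct. The paper does not give its own argument for this lemma but cites it from~\cite{johannsen2013expanders} and remarks that it ``is a consequence of a generalization of Hall's Marriage Theorem''; your cloning-plus-Hall approach is exactly such a derivation, and the case analysis using (i)--(iii) is airtight.
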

	
	As in \cite{johannsen2013expanders}
	we call the set of edges between the vertices of $U$ and their respective parts in $W$ a star matching.

	\bigskip

	\section{A tree universal graph}\label{sec:tree.universal}
	
	The following theorem provides a sufficient condition
	for a graph to be universal for all trees of almost linear maximum degree. In the next two sections we will then prove that Maker and Waiter have strategies
	to build a graph satisfying such a condition.
	
	\begin{theorem}\label{thm:universal_new}
		Let $\alpha\in (0,1)$, 
		and $C_0>0$ be any constants.
		There exist constants $\gamma',c>0$ such that 
		the following is true
		for every $\gamma\in (0,\gamma')$ and every large enough integer $n$.
		Let $G=(V,E)$ be a graph on $n$ vertices
		such that the following properties hold:
		\begin{enumerate}
			
			\item[(1)] \emph{Partition:} 
			There is a partition $V=V_1\cup V_2$ such that 
			$|V_2| = 500 \lfloor \gamma n \rfloor$.
			
			\item[(2)] \emph{Suitable star:}
			There are a vertex $x^\ast$ and disjoint sets 
			$R^\ast,S^\ast\subset V_1$ 
			such that the following holds:
			\begin{enumerate}
				\item[(a)] $|S^\ast| = \lfloor 25 C_0\log(n) \rfloor$ and
				$S^\ast \subset N_G(x^\ast)$.
				\item[(b)] $|R^\ast|\leq 25$ and for each $v\in R^\ast$
				the following holds: 
				If $v$ is not adjacent with $x^\ast$, 
				then $v$ is adjacent with a vertex $s_v\in S^\ast$,
				such that $s_v\neq s_w$ if $v\neq w$.
				\item[(c)] For all 
				$w\in V\setminus (R^\ast\cup S^\ast)$, we have
				$d_G(w,S^\ast) \geq 2C_0 \log(n)$.
			\end{enumerate}
			
			\item[(3)] \emph{Pair degree conditions:}
			For every $v\in V(G)$ there are at most $\log(n)$ vertices 
			$w\in V(G)$ such that $|N_G(v)\cap N_G(w)\cap V_1| < 
			\alpha n$.
			
			\item[(4)] \emph{Edges between sets:}
			Between every two disjoint sets 
			$A\subset V_1$ and $B\subset V$ of size
			$\lfloor C_0\log(n) \rfloor$ there is an edge in $G$.
			
			\item[(5)] \emph{Suitable clique factor:}
			In $G[V_2]$ there is a collection
			$\mathcal{K}$ of $100\lfloor \gamma n\rfloor $ 
			vertex-disjoint $K_5$-copies such that the following holds:
			\begin{enumerate}
				\item[(a)] There is a partition 
				$\mathcal{K}=\mathcal{K}_{good}\cup \mathcal{K}_{bad}$
				such that $|\mathcal{K}_{bad}|=\lfloor \gamma n \rfloor$.
				\item[(b)] Every vertex $v\in V$ which is not in a clique 
				of $\mathcal{K}_{good}$
				satisfies $d_G(v,V_2)\geq 40\lfloor \gamma n \rfloor$. 
				\item[(c)] For every clique $K\in \mathcal{K}_{good}$
				there are at most $\gamma n$ cliques
				$K'\in \mathcal{K}_{good}$ such that $G$ does not have
				a matching of size 3 between $V(K)$ and $V(K')$.
			\end{enumerate}	
		\end{enumerate}
		Then $G$ contains a copy of every tree $T$ on $n$ vertices and with maximum degree $\Delta(T)\leq \frac{cn}{\log(n)}$.
	\end{theorem}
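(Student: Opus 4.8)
The plan is to run the dichotomy of the Classifying trees lemma (Lemma~\ref{lem:classify.trees}) and treat the two resulting kinds of tree separately, in each case embedding a large ``core'' subforest of $T$ by a Haxell-type argument and then reattaching a linear-sized ``disposable'' remainder using the remaining structure of $G$. Concretely: given $\alpha$ and $C_0$, I would fix a large constant $\ell$, a small constant $\delta$ with $\delta\ll\alpha$, and $C$ a large multiple of $C_0$; let $\gamma_0,c_0$ be the constants returned by Lemma~\ref{lem:classify.trees} for $\ell,\delta,C$, and set $c:=c_0$ and $\gamma':=\gamma_0/10^3$, so that for any tree $T$ with $\Delta(T)\le cn/\log n$ the dichotomy applies and, in its case~(i), even yields at least $100\lfloor\gamma n\rfloor$ vertex-disjoint bare paths of length $\ell$. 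In both cases the expansion fueling the Haxell embedding (Lemma~\ref{lem:haxell}) comes from properties~(3) and~(4): for $|X|\ge C_0\log n$, property~(4) forces $|N_G(X)|\ge n-C_0\log n$, an enormous surplus over $d|X|+v(T)$ once $|X|\ge k$, while for $1\le|X|<C_0\log n$ property~(3) gives all but $O(\log n)$ single vertices degree $\ge\alpha n$, so $|N_G(X)|\ge\alpha n-|X|\log n\gg d|X|$. The $O(\log n)$ exceptional vertices of $G$ (those of small degree, or bad for property~(3)) together with the $O(\log n)$ vertices of $T$ of largest degree are handled separately, via the suitable star of property~(2) -- this is precisely the ingredient that is invisible to the Erd\H os--Selfridge/expander approach of~\cite{johannsen2013expanders,han2023expanders} and is what permits $\Delta(T)=\Theta(n/\log n)$.

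For Case~(ii) (many leaves, so $|L(T)|\ge C\gamma_0 n$ and there is a subtree $T'$ with $v(T')\le\delta n$ and $|V(T')\cap N_T(L(T))|\ge C\log n$), I would let $Z\subset V(G)$ be the $O(\log n)$ vertices of small degree or bad for~(3), and first embed $T^\circ:=T-L(T)$ into $G-Z$. The pre-embedded seed $S$ (in the sense of Lemma~\ref{lem:haxell}) is taken to be a subtree of $T^\circ$ containing the $\le C\log n$ leaf-parents of $T'$ and the $\le 25$ highest-degree vertices of $T$; it is hand-embedded into the suitable star, sending a vertex of largest leaf-demand to $x^\ast$, the leaf-parents of $T'$ into $S^\ast$, and the remaining large-degree vertices into $R^\ast$ -- conditions~(2a)--(2c) are exactly what makes this consistent and leaves these images with $\ge\Delta(T)\cdot C_0\log n$ free neighbours. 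With this seed fixed, Lemma~\ref{lem:haxell} (with $d:=\Delta(T)$, $k:=\lceil C_0\log n\rceil$) extends the embedding to all of $T^\circ$. Since $v(T^\circ)=n-|L(T)|$ and we embedded into $G-Z$, the set $W$ of unused vertices (which contains $Z$) has size exactly $|L(T)|$, so each leaf must be placed on a distinct vertex of $W$ adjacent to its parent's image. I would finish by the Star matching lemma (Lemma~\ref{lem:star-matching}) with $U:=\{g(p):p\text{ a leaf-parent of }T\}$, $d:=\Delta(T)$, $m:=\lceil C_0\log n\rceil$: conditions~(ii),(iii) follow from property~(4) (plus the arrangement that $Z$-vertices are adjacent to many leaf-parents and that the leaf-parents of $T'$ sit around $x^\ast$), and condition~(i) holds for $|X|\ge m$ by property~(4) and for $|X|<m$ by the free-neighbour bound secured above.

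For Case~(i) (many bare paths) I would select $50\lfloor\gamma n\rfloor$ of the bare paths of length $\ell$ (with $\ell$ chosen large enough) and earmark in each a block of $10$ consecutive interior vertices; the forest $F$ obtained by deleting these $500\lfloor\gamma n\rfloor$ vertices then has exactly $|V_1|$ vertices, and I embed it into $G[V_1]$ by the same Haxell machinery (growing components one after another, with the same suitable-star treatment of the $O(\log n)$ delicate vertices), arranging in addition that the two ``stubs'' of each earmarked path flanking its block are sent to vertices with many neighbours in $V_2$ -- which is free, since any image vertex, not lying in a good clique, has $\ge 40\lfloor\gamma n\rfloor$ neighbours in $V_2$ by property~(5b). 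It remains to route each earmarked block (a path on $10$ vertices) through a pair of good cliques of $\mathcal K$ joined by a matching of size $3$, as provided by~(5c): each of the $10$ vertices of $G$ in such a pair is used, $K_5$ is Hamilton-connected so the ends of the two hosted sub-paths can be prescribed, and a $3$-matching lets us route from one clique to the other avoiding the (at most one) prescribed vertex on each side. Since each stub reaches $\ge 8\lfloor\gamma n\rfloor$ cliques by~(5b) and each good clique has a $3$-matching to all but $\gamma n$ others by~(5c), a Hall/defect argument over the cliques assigns a valid pair to each block, with the $\lfloor\gamma n\rfloor$ bad cliques absorbed using a little slack; together with $F$ this produces a spanning copy of $T$.

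The hard part, and the reason properties~(2) and~(5) are needed at all, will be the simultaneous control of the $O(\log n)$ ``hard'' vertices: in $T$, the few vertices of degree close to $n/\log n$ and, in case~(ii), the $\ge C\log n$ parents of leaves crowded into the small subtree $T'$; in $G$, the $O(\log n)$ vertices of small degree. An expander-type argument simply cannot place a degree-$\Theta(n/\log n)$ vertex of $T$ so that it retains $\Theta(n/\log n)$ free neighbours until its leaves are attached, which forces us to pre-embed the offending part of $T$ into the clean star $x^\ast, S^\ast, R^\ast$ before invoking Haxell, and forces case~(i) to use the clique-factor of property~(5) as a flexible absorber for the bare-path interiors rather than routing them through the possibly degree-deficient bulk of $G$. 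Making this pre-embedding mesh exactly with the hypotheses~(P1),(P2) of Lemma~\ref{lem:haxell} and with the hypotheses of Lemma~\ref{lem:star-matching}, and checking that precisely the right number of vertices is left unused in precisely the right places (the $|L(T)|$ unused vertices in case~(ii); the $500\lfloor\gamma n\rfloor$ clique-slots matched to the earmarked blocks in case~(i)), is where essentially all of the work lies; the pseudorandom bookkeeping behind properties~(3) and~(4) is comparatively routine.
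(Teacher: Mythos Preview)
Your high-level plan (dichotomy via Lemma~\ref{lem:classify.trees}, Haxell for the core, star matching for leaves, clique factor for bare paths) matches the paper's, but there is a genuine gap in Case~(ii). You propose to hand-embed a seed subtree sending the leaf-parents of $T'$ into $S^\ast$; this is only feasible when those leaf-parents are all $T$-neighbours of a single vertex $x$ (which can then be sent to $x^\ast$), since $S^\ast\subset N_G(x^\ast)$ is the sole structural guarantee and nothing controls $G[S^\ast]$ or the neighbourhoods of individual $S^\ast$-vertices. When the $\ge C\log n$ leaf-parents are spread through $T'$ with no common $T$-neighbour, there is no way to force them into $S^\ast$ while respecting the tree edges. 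The paper splits exactly here: if some $x\in V(T_1)$ has $\ge 0.5C_1\log n$ leaf-parent neighbours, it proceeds essentially as you suggest; otherwise it performs a \emph{random} greedy embedding of $T_1$ into $G[V_1]$ and shows, using property~(3) and Chernoff, that a.a.s.\ every unused vertex $w$ satisfies $d_G(w,g(N_1))\ge 2C_0\log n$, which is precisely what condition~(iii) of the star matching needs. This random-embedding step is the crux of Case~2 and is absent from your plan.

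Two further corrections. Property~(3) actually forces $|N_G(v)\cap V_1|\ge\alpha n$ for \emph{every} $v$ (if some $v_0$ had small degree into $V_1$, every $w$ would be a bad partner of $v_0$, contradicting~(3) with $v=v_0$), so your set $Z$ is empty and the ``$\le 25$ highest-degree vertices of $T$'' need no special treatment; $R^\ast$ is not a receptacle for high-degree $T$-vertices but the set of $G$-vertices that may fail~(2.c), which the paper matches to leaves individually via the edges $s_v$ of~(2.b). And in Case~(i) your forest $F$ has $v(F)=|V_1|$ exactly, so Haxell's~(P2) would demand $|N_G(X)\cap V_1|\ge d|X|+|V_1|$, which is impossible; the paper instead joins the bare-path endpoints by edges (so $T_1$ is a tree with $v(T_1)=n-501\lfloor\gamma n\rfloor<|V_1|$), embeds it with $\lfloor\gamma n\rfloor$ vertices of $V_1$ to spare, and then threads each of the $\lfloor\gamma n\rfloor$ bare paths through one leftover $V_1$-vertex, one bad clique, six designated good cliques and a long chain of further good cliques obtained from a Hamilton cycle on the good-clique auxiliary graph, assigned via a perfect-matching argument.
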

	
	The overall idea of the proof will be as follows.
	We will distinguish the desired trees
	by their containment of many bare paths
	or many leaves. When a tree has many bare paths,
	we will first embed everything except from the bare paths into $V_1$,
	by using that $G$ has good expanding properties
	which are guaranteed by properties (3) and (4).
	By making use of the clique factor in (5), we will then manage to complete the embedding of $T$.
	Similarly, when caring about trees with many leaves,
	in a first step we will embed everything 
	except from the leaves into $V_1$
	by using properties (3) and (4), 
	and only afterwards we will care about the leaves by applying the star matching lemma.
	In order to succeed with this application,
	we need to do the first embedding step more carefully.
	We do so by distinguishing two cases,
	depending on whether there exists a vertex $x$
	which is adjacent to many neighbors of leaves.
	If such a vertex $x$ exists, we use property (2),
	embed this particular vertex onto $x^\ast$
	and make sure that each vertex in $S^\ast$
	becomes the image of a leaf neighbor. Property (2.c)
	together with the expanding properties
	then help to verify the conditions of the star matching lemma.
	Otherwise, if such a vertex $x^\ast$ does not exists,
	we apply a random embedding argument together with property (3)
	to ensure the properties needed for the star matching lemma.

	\medskip

	\begin{proof}[Proof of Theorem~\ref{thm:universal_new}]
		In the following we prove Theorem~\ref{thm:universal_new}.
		Let $\alpha$ and $C_0$ be given by the 
		statement of Theorem~\ref{thm:universal_new}.
		Choose $\delta := 0.5 \alpha$, and let $\ell := 502$. 
		Let $m := \lfloor C_0\log(n) \rfloor$.
		Choose $C_1 := \max\{100C_0\alpha^{-1},501\}$. 
		Let $c_0$ and $\gamma_0$ be given 
		by Lemma~\ref{lem:classify.trees} with input $\ell$, $\delta$, and
		$C_1$. Further, choose 
		$\gamma' := \min\{10^{-5},\gamma_0\}$, and $\gamma\in (0,\gamma')$, and let $c := \min\{c_0, \frac{\alpha}{10C_0}, 
		\frac{\gamma}{10C_0}\}$. 
		Let $d := \frac{cn}{\log(n)}$.
		In the following, we assume $n$ to be large enough 
		whenever necessary, e.g.~to apply Lemma~\ref{lem:classify.trees}
		with the specified inputs.

		Let $G$ be a graph satisfying the properties (1)--(5)
		from Theorem~\ref{thm:universal_new}.
		We want to show that $G$ contains a copy 
		of every tree $T$ with maximum degree $\Delta(T) \leq d$. 
		Consider any such tree $T$. 
		Because of Lemma~\ref{lem:classify.trees} 
		(with the inputs and outputs above) we know that $T$  
		contains $\gamma_0 n \geq \gamma n$ vertex-disjoint bare paths of 
		length $\ell$, or $T$ has at least $C_1 \gamma_0 n \geq C_1 \gamma n$
		leaves and contains a small subtree $T' \subset T$ with 
		$v(T') \leq \delta n$ and $|V(T') \cap N_T(L(T))|\geq C_1 \log(n)$. 
		In the following, we will show for each of these two cases separately 
		how we can embed $T$ into $G$.
		
		\smallskip
		
		\textbf{Case 1:} $T$ has at least $\gamma n$ vertex-disjoint bare
		paths of length $502$. In this case, roughly speaking, we embed all of $T$ but some
		of the bare paths into $V_1$ using Lemma~\ref{lem:haxell}, and finish
		the embedding by using the clique factor (property (5))
		to embed the bare paths and 
		absorb the left-over vertices of $V_1$.
		
		Let $\mathcal{P}$ be a family of exactly $\lfloor \gamma n \rfloor$ 
		vertex-disjoint bare paths of length $502$. We form a new tree $T_1$ 
		from $T$ as follows: for each path $P \in \mathcal{P}$ we delete the 
		inner vertices of the path and join the endpoints by an edge. Note 
		that $\Delta(T_1) \leq d$ and 
		$v(T_1) = n - 501\lfloor \gamma n \rfloor$.
		We want to embed $T_1$ into $G[V_1]$ using Lemma~\ref{lem:haxell} 
		(with $S$ being the empty graph and $k=m$). 
		To do so, we have to check the following properties:
		\begin{enumerate}
			\item[(P1)] $|N_G(X) \cap V_1| \geq d |X| + 1$ 
			for every $X \subset V_1$ with $1 \leq |X| \leq 2m$.
			\item[(P2)] $|N_G(X) \cap V_1| \geq d |X| + v(T_1)$ 
			for every $X \subset V_1$ with $m < |X| \leq 2m$.
		\end{enumerate}
		
		By property (3) we conclude that 
		$|N_G(v) \cap V_1| \geq \alpha n$ for all $v \in V_1$.
		Hence, for every $X \subset V_1$ with $1 \leq |X| \leq 2m$
		we obtain
		$|N_G(X) \cap V_1| \geq \alpha n - |X| 
		\geq 0.9 \alpha n \geq d\cdot 2m + 1$
		by the choice of $d$ and $m$, and for $n$ large enough. 
		In particular, (P1) holds. 
		
		Now consider any set $X \subset V_1$ with $m < |X| \leq 2m$. 
		Then, by property (4), 
		less than $m$ vertices in $V_1\setminus X$ 
		are not in the neighborhood of $X$. Therefore,
		\begin{align*}
			|N_G(X) \cap V_1| 
			& > |V_1\setminus X| - m 
			\geq |V_1| - 3m
			= n - 500 \lfloor \gamma n \rfloor - 3m \\
			& > 2 d \cdot m + n - 501 \lfloor \gamma n \rfloor 
			\geq d |X| + v(T_1)
		\end{align*}
		by the choice of $d$ and $m$. That is, (P2) holds.
		As $G$ satisfies (P1) and (P2), we can embed $T_1$ into 
		$G[V_1]$ using Lemma~\ref{lem:haxell}, resulting in an 
		embedding $g : V(T_1) \rightarrow V_1$. Note that this also is an
		embedding of the tree obtained from $T$ by deleting the inner vertices
		of all paths in $\mathcal{P}$.

		\smallskip
		
		Hence, we are left with embedding the $501 \lfloor \gamma n \rfloor$ 
		inner vertices of the family of bare paths $\mathcal{P}$. 
		Let $R := V_1 \setminus g(V(T_1))$ be the set of 
		$\lfloor \gamma n \rfloor$ vertices of $V_1$ 
		which were not used for the embedding of $T_1$.
		For each path $P \in \mathcal{P}$, denote by $v^P$ and $w^P$ 
		the images of the endpoints of $P$ under $g$. Further, since 
		$|\mathcal{P}| = |R| = \lfloor \gamma n \rfloor$, 
		we can fix exactly one distinct
		vertex $r^P \in R$ for every path $P \in \mathcal{P}$. 
		Similarly, since 
		$|\mathcal{P}| = |\mathcal{K}_{bad}| = \lfloor \gamma n \rfloor$, 
		we can fix exactly one distinct
		clique $K^P \in \mathcal{K}_{bad}$ for every path $P \in \mathcal{P}$.
		In each of these cliques we fix two arbitrary vertices
		$x^P,y^P\in V(K^P)$.

		\smallskip
		
		As a first step towards embedding the inner vertices of $\mathcal{P}$,
		we choose a collection of six cliques
		$\mathcal{K}^P = \{K^P_1,\ldots,K^P_6\} \subset \mathcal{K}_{good}$ 
		for every $P \in \mathcal{P}$ such that the following properties 
		are satisfied:
		
		\begin{minipage}[t]{0.49\textwidth}
			\vspace{0pt}
			\begin{enumerate}
				\item[(1)] $\mathcal{K}^P \cap \mathcal{K}^{P'} = \varnothing$ 
				for all $P' \in \mathcal{P}\setminus\{P\}$,
				\item[(2)] $e_G(v^P, V(K^P_1)) > 0$,
				\item[(3)] $e_G(r^P, V(K^P_2)) > 0$,
				\item[(4)] $e_G(r^P, V(K^P_3)) > 0$,
			\end{enumerate} 
		\end{minipage}
		\begin{minipage}[t]{0.49\textwidth}
			\vspace{0pt}
			\begin{enumerate}
				\item[(5)] $e_G(x^P, V(K^P_4)) > 0$,
				\item[(6)] $e_G(y^P, V(K^P_5)) > 0$,
				\item[(7)] $e_G(w^P, V(K^P_6)) > 0$.
			\end{enumerate} 
		\end{minipage}
		
		\medskip
		
		Note that we can find such cliques greedily by property (5.b).
		Indeed, this property ensures that 
		each of the relevant vertices 
		$r^P,v^P,w^P,x^P,y^P$
		is adjacent to
		at least $8\lfloor \gamma n\rfloor$ cliques from $\mathcal{K}$, 
		and hence to at least $7\lfloor \gamma n\rfloor$ cliques from
		$\mathcal{K}_{good}$,
		while for the properties above we only need to choose 
		$6\lfloor \gamma n\rfloor$ cliques in total.
		
		Moreover, based on (2)--(7), let $k_1^P\in V(K_1^P)$
		be a neighbor of $v^P$, let $k_2^P\in V(K_2^P)$
		be a neighbor of $r^P$, and so on, until reaching
		a neighbor $k_6^P\in V(K_6^P)$ of $w^P$.

		\smallskip
		
		Next, we consider the following auxiliary graph $H$: 
		its vertex set is $\mathcal{K}_{good}$ and 
		we put an edge between two vertices $K,K'\in V(H)$
		if and only if in $G$ there is a matching 
		of size $3$ between the cliques $K$ and $K'$.
		For the graph $H'\subset H$ induced on
		$\mathcal{K}'=\mathcal{K}_{good}\setminus 
		\bigcup_{P\in\mathcal{P}} \mathcal{K}^P$
		we then have
		$v(H')=93\lfloor \gamma n\rfloor$
		and $\delta(H')\geq v(H') - \gamma n > v(H')/2$
		by property (5.c).
		Hence, by Dirac's Theorem (see e.g.~\cite{west2001introduction}), we can find a Hamilton cycle in $H'$,
		and we can split this Hamilton cycle
		into $3\lfloor \gamma n\rfloor$ vertex-disjoint paths
		each having exactly 31 vertices. Denote with $\mathcal{P}_H$
		the collection of these paths.
		
		\smallskip
		
		We then define an auxiliary bipartite graph $F=(A\cup B,E(F))$
		with partite sets
		$$A:=\{ (K_i^P,K_{i+1}^P):~ i\in\{1,3,5\}, P\in\mathcal{P}\}
		\quad \text{and} \quad
		B:=\mathcal{P}_H,$$
		where we put an edge between a vertex 
		$(K_i^P,K_{i+1}^P)\in A$ and a vertex $Q\in B$
		if and only if in $H$ there is a perfect matching
		between the endpoints of the path $Q$ and the vertices 
		$K_i^P,K_{i+1}^P$
		(which means that $Q$ can be extended to a longer path in $H$ with endpoints 
		$K_i^P,K_{i+1}^P$).
		We then have
		$|A|=|B|=3\lfloor \gamma n\rfloor$
		and $\delta(F)\geq |A| - \gamma n > |A|/2$. Indeed, a vertex 
		$(K_i^P,K_{i+1}^P)\in A$ and a vertex $Q\in B$ with endpoints $(K_1^Q, K_2^Q)$ are connected in $F$ unless there are at least two edges missing between the vertices $K_i^P,K_{i+1}^P$ and the vertices $K_1^Q, K_2^Q$ in the auxiliary graph $H$. This can happen at most $\gamma n$ times per vertex $v \in V(F)$ since $\delta(H)\geq v(H) - \gamma n$ by (5.c).
		By a standard application of Hall's condition
		(see e.g.~\cite{west2001introduction})
		it follows that $F$ has a perfect matching.
		Denote with $Q_{i,i+1}^P$ the path which is matched to 
		the pair $(K_i^P,K_{i+1}^P)$ in this matching, for every $i\in\{1,3,5\}$ and  $P\in\mathcal{P}$.
		
		\begin{center}
			\includegraphics[scale=0.8,page=1]{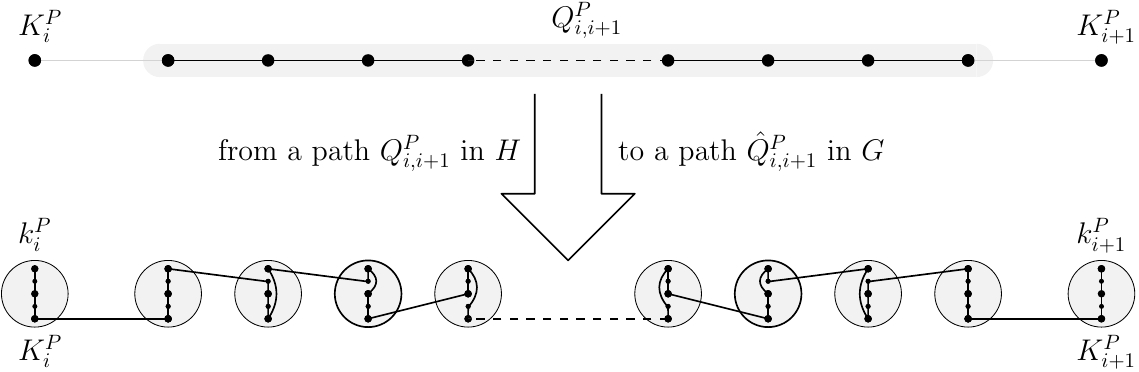}
		\end{center}
		
		Note that this path $Q_{i,i+1}^P$ describes a sequence
		of 31 cliques from $\mathcal{K}_{good}$,
		such that between $K_i^P$ and the first clique in the sequence,
		between two consecutive cliques in the sequence, 
		and between the last clique in the sequence and $K_{i+1}^P$ 
		there is a matching of size $3$ in the graph $G$.

		It is then easy to find a path $\hat{Q}_{i,i+1}^P$
		on 165 vertices which has endpoints $k_i^P$ and $k_{i+1}^P$,
		and which goes through all the vertices of $K_i^P$, 
		$K_{i+1}^P$ and all the cliques
		in $Q_{i,i+1}^P$. Indeed, from each of the mentioned
		matchings pick greedily one edge such that all these edges 
		are independent and not incident with 
		$k_i^P$ or $k_{i+1}^P$;
		then connect $k_i^P$, $k_{i+1}^P$ and these matching edges
		with paths of length 4 that are fully contained in
		one of the relevant cliques.

		\smallskip
		
		We now have everything that we need to describe
		how we can embed the paths $P\in \mathcal{P}$
		into $R\cup V_2$ (with fixed endpoints $v^P,w^P$),
		and thus finish the embedding of $T$.
		Given $P\in \mathcal{P}$, let
		$Q_{xy}^P$ be any path of length 4
		in $K^P$ with endpoints $x^P$ and $y^P$.
		Then we embed $P$ to the path
		given by the sequence
		$(v^P,\hat{Q}_{1,2}^P,r^P,\hat{Q}_{3,4}^P,Q_{xy}^P,\hat{Q}_{5,6}^P,w^P)$.
		Note that this way, the inner vertices of $P$ are embedded
		into $R\cup V_2$, disjointly from the images of all other paths in $\mathcal{P}$.

		\begin{center}
			\includegraphics[scale=0.8,page=2]{path_pic_new}
		\end{center}

		\bigskip
		
		\textbf{Case 2:}
		$T$ has at least $C_1 \gamma n$ leaves and contains a subtree 
		$T' \subset T$ with $v(T') \leq \delta n$ and 
		$|V(T') \cap N_T(L(T))|\geq C_1 \log(n)$.
		Let $N_1\subset V(T') \cap N_T(L(T))$ be any subset
		of size $\lfloor C_1 \log(n) \rfloor$.
		
		In this case, we start by embedding the subtree $T'$ minus the leaves $L(T)$ in such a way that the vertices of $N_1$ are embedded in a suitable way. 
		Afterwards, we extend that embedding 
		to an embedding of $T - L(T)$ by 
		an application of Lemma~\ref{lem:haxell}. 
		Lastly, we use Lemma~\ref{lem:star-matching}, 
		to embed the leaves $L(T)$. 
		More precisely, we set 
		$T_1 := T' - L(T)$,  $T_2 := T - L(T)$,  
		and we distinguish two cases, depending on how the 
		vertices of $N_1$ are distributed in $T_1$.
		
		\textbf{Case 2.1:} Assume that there is a vertex $x\in V(T_1)$ 
		with $d_{T_1}(x,N_1)\geq 0.5C_1\log(n)$.
		Then we do the embedding of $T_1$ as follows:
		we embed $x$ onto $x^\ast$, and we embed the vertices of 
		$N_{T_1}(x,N_1)$ into $N_G(x^\ast)\cap V_1$, which has size at least $\alpha n\geq d_{T_1}(x,N_1)$ by property (3), in such a
		way that all vertices of $(R^\ast\cap N_G(x^\ast))\cup S^\ast$
		are used, which is possible since $d_{T_1}(x,N_1) \geq |R^\ast\cup S^\ast|$ by properties (2.a) and (2.b), and by choice of $C_1$.
		Afterwards, embed the rest of $T_1$ greedily into $V_1$,
		which is possible since by (3) all vertex degrees into $V_1$ are at least $\alpha n > v(T_1)$.
		Because of (2.c), the result then is an embedding $g:V(T_1)\rightarrow V_1$ into $G[V_1]$ such that the following holds:
		every vertex $w\in V(G)\setminus (g(V(T_1))\cup R^\ast)$ satisfies
		$d_G(w,g(N_1))\geq 2C_0\log(n)$.
		
		\smallskip
		
		Next, we extend this to an embedding of $T_2$ into $G[V_1]$.
		For this, we use Lemma~\ref{lem:haxell} (with $S := T_1$, $k := m$
		and with $T$ being replaced with $T_2$).
		To do so, we have to check the following properties:
		\begin{enumerate}
			\item[(P1)] $|(N_G(X) \cap V_1) \setminus g(V(T_1))| \geq d |X| + 1$ 
			for every $X \subset V_1$ with $1 \leq |X| \leq 2m$.
			\item[(P2)] $|N_G(X) \cap V_1| \geq d |X| + v(T_2)$ 
			for every $X \subset V_1$ with $m < |X| \leq 2m$.
		\end{enumerate}
		By property (3) we know that 
		$|(N_G(v) \cap V_1)\setminus g(V(T_1))| 
		\geq \alpha n - \delta n \geq 0.5\alpha n$ 
		for all $v \in V_1$. Hence, for every $X \subset V_1$ 
		with $1 \leq |X| \leq 2m$ we obtain
		$|(N_G(X) \cap V_1)\setminus g(V(T_1))| \geq 0.5\alpha n - |X| 
		\geq 0.4 \alpha n \geq d\cdot 2m + 1$,
		by the choice of $d$ and $m$, and for $n$ large enough. 
		In particular, (P1) holds. 
		Now consider any set $X \subset V_1$ with $m < |X| \leq 2m$. 
		Analogously to Case~1, since 
		$v(T_2)\leq n - C_1\gamma n \leq n - 501 \lfloor \gamma n \rfloor $, we get 
		$|N_G(X) \cap V_1| \geq d |X| + v(T_2)$,
		and hence (P2). As $G$ satisfies (P1) and (P2), we can extend
		$g$ to an embedding of $T_2$ into $G[V_1]$. 
		
		\smallskip
		
		We are left with embedding the leaves of $T$. Set 
		$U := g(N_T(L(T)))$ and $W := V \setminus g(V(T_2))$. 
		
		We first embed the vertices of $R^\ast\setminus g(V(T_2))$
		one by one. Let $w\in R^\ast\setminus g(V(T_2))$,
		then $w$ does not belong to $N_G(x^\ast)$, since otherwise
		it would be used for the embedding of $T_1$. With (2.b) it follows that there is a distinct vertex $s_w\in S^\ast\subset g(N_1)$
		such that $ws_w\in E(G)$. Since $g^{-1}(s_w)\in N_1\subset N_T(L(T))$ we can find a leaf $w'$ in $T$ which is adjacent with
		$g^{-1}(s_w)$. We then extend $g$ by embedding $w'$ to $w$.
		Moreover, 
		we then remove $w$ from $W$, and
		if $w'$ was the only leaf at $g^{-1}(s_w)$,
		we remove $s_w$ from $U$.
		Note that in this procedure we delete at most $|R^\ast|$
		vertices from each of the sets $U$ and $W$.
		
		Finally, we want to find a star matching 
		between the updated sets $U$ and $W$ using Lemma~\ref{lem:star-matching} applied with $d$, $m$, and 
		with $k(u)$ being the number of leaves in $T$ which are 
		adjacent with $g^{-1}(u)$ but are still not embedded,
		for every vertex $u \in U$. Then, by extending $g$ such that for every $u\in U$ we embed the remaining $k(u)$ leaves
		which are adjacent with $g^{-1}(u)$
		to the set $W_u$, as given by 
		Lemma~\ref{lem:star-matching}, the embedding will be finished. 
		Hence, it remains to be shown that we can apply 
		Lemma~\ref{lem:star-matching}. 
		We need to verify that the following three conditions hold:
		\begin{enumerate}
			\item[(i)] $|N_G(X) \cap W| \geq d|X|$ 
			for all $X \subseteq U$ with $1 \leq |X| \leq m$.
			\item[(ii)] $e_G(X,Y) > 0$ for all 
			$X \subseteq U$ and $Y \subseteq W$ with $|X| = |Y| \geq m$.
			\item[(iii)] $|N_G(w) \cap U| \geq m$ for all $w \in W$.
		\end{enumerate}
		By property (5.b) and since $U\subset V_1$ and $V_2\setminus R^\ast\subset W$, it follows that 
		$|N_G(X) \cap W| \geq 40\lfloor \gamma n\rfloor - |R^\ast|\geq d m$
		for every $X\subset U$, 
		by the choice of $d$ and $m$, and thus (i) holds.
		By property (4) and since 
		$U\subset V_1$, we can immediately conclude (ii).
		Lastly, (iii) follows directly from the fact that 
		$d_G(w,g(N_1))\geq 2C_0\log(n)$
		for all $w \in V(G)\setminus (g(V(T_1))\cup R^\ast)$,
		and since $R^\ast \cap W = \varnothing$.

		\medskip
		
		\textbf{Case 2.2:} Assume that there is no vertex $x\in V(T_1)$ 
		with $d_{T_1}(x,N_1)\geq 0.5C_1\log(n)$.
		This case works essentially the same way as Case 2.1;
		the main differences will be that we embed $T_1$ in a random way,
		and that we do not need to care separately about the vertices
		of $R^\ast$.
		For the first step we claim the following.
		
		\begin{claim}\label{clm:small.tree.random}
			There is an embedding $g:V(T_1)\rightarrow V_1$ of $T_1$ into $G[V_1]$
			such that every vertex $w\in V(G)\setminus g(V(T_1))$ satisfies
			$d_G(w,g(N_1))\geq 2C_0\log(n)$.
		\end{claim}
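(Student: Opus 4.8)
The plan is to build the embedding $g$ by a randomised greedy procedure and then verify the degree condition for the image of $N_1$ by a union bound over all vertices of $G$. First I fix a root of $T_1$ and a breadth-first order $v_1,\dots,v_t$ of $V(T_1)$, where $t=v(T_1)\le v(T')\le\delta n=\tfrac12\alpha n$. I embed $v_1$ onto a uniformly random vertex of $V_1$, and for $i\ge 2$, writing $v_j$ for the parent of $v_i$ in $T_1$, I embed $v_i$ onto a uniformly random element of $\bigl(N_G(g(v_j))\cap V_1\bigr)\setminus\{g(v_1),\dots,g(v_{i-1})\}$. Property~(3) gives $|N_G(u)\cap V_1|\ge\alpha n$ for every $u\in V(G)$, since all but at most $\log n$ vertices $u'$ satisfy $|N_G(u)\cap N_G(u')\cap V_1|\ge\alpha n$; hence this candidate set always has size at least $\alpha n-t\ge\tfrac12\alpha n>0$, so the procedure never gets stuck and outputs an embedding $g$ of $T_1$ into $G[V_1]$.

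Now fix $w\in V(G)$ and set $\operatorname{Bad}(w):=\{u\in V(G):|N_G(u)\cap N_G(w)\cap V_1|<\alpha n\}$, so that $|\operatorname{Bad}(w)|\le\log n$ by property~(3). I partition $N_1$ into \emph{clusters} according to the parent in $T_1$; the Case~2.2 hypothesis guarantees that every cluster has size less than $\tfrac12 C_1\log n$. If a cluster has parent $p$, then at the step when $p$ is embedded it is placed uniformly in a set of size at least $\tfrac12\alpha n$, so $\Prob[g(p)\in\operatorname{Bad}(w)]\le\tfrac{2\log n}{\alpha n}$; and whenever $g(p)\notin\operatorname{Bad}(w)$, each later-embedded member $v_i$ of that cluster is placed uniformly into a set of size at most $n$ of which at least $|N_G(g(p))\cap N_G(w)\cap V_1|-t\ge\tfrac12\alpha n$ elements lie in $N_G(w)$, so $v_i$ lands in $N_G(w)$ with probability at least $\tfrac12\alpha$ given everything revealed before that step.

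To finish, I would argue as follows. Since there are at most $|N_1|\le C_1\log n$ clusters, the probability that at least two cluster-parents fall into $\operatorname{Bad}(w)$ is at most $\binom{C_1\log n}{2}\bigl(\tfrac{2\log n}{\alpha n}\bigr)^2=o(1/n)$. On the complementary event, the clusters whose parent avoids $\operatorname{Bad}(w)$ contain together at least $|N_1|-\tfrac12 C_1\log n\ge\tfrac13 C_1\log n$ vertices of $N_1$, and for each such vertex the conditional probability of being mapped into $N_G(w)$, given the previously revealed choices, is at least $\tfrac12\alpha$; as $C_1\ge 100C_0\alpha^{-1}$, the conditional mean is at least $\tfrac16\alpha C_1\log n\ge\tfrac{50}{3}C_0\log n$, and a Chernoff-type bound for a sum of $\{0,1\}$-variables with such conditional lower bounds (via the usual supermartingale comparison, cf.\ Lemma~\ref{lem:Chernoff}) then shows—using that $C_1$ is large enough relative to $C_0$—that at least $2C_0\log n$ of them land in $N_G(w)$, except with probability $o(1/n)$. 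Hence $\Prob[d_G(w,g(N_1))<2C_0\log n]=o(1/n)$, and a union bound over the at most $n$ choices of $w$, together with $g(N_1)\subset g(V(T_1))$, produces (with probability tending to $1$, in particular positive) an embedding $g$ of $T_1$ as required in the claim.

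The heart of the matter is this last estimate. The choices made by the embedding procedure are strongly dependent, so the argument must be organised—via the clustering and the natural filtration of the process—so that, for each fixed $w$, the relevant indicator variables are conditionally independent enough for a concentration inequality, and, crucially, so that the failure probability for a single $w$ is genuinely $o(1/n)$ rather than merely $o(1)$ (a naive union bound ruling out a single exceptional parent only yields the latter, which is too weak to then sum over the $n$ vertices $w$). This is exactly where the Case~2.2 hypothesis is used: bounding the size of any one cluster ensures that excluding two exceptional cluster-parents already keeps a constant fraction of $N_1$ available, and it is also where the constants $C_1=\max\{100C_0\alpha^{-1},501\}$ and $\delta=\tfrac12\alpha$ have to be chosen carefully so that the Chernoff estimate closes.
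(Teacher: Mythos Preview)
Your proof is correct and follows essentially the same approach as the paper: a uniformly random greedy embedding of $T_1$, the definition of bad parents via property~(3), the observation that the probability of two or more bad parents for a fixed $w$ is $o(1/n)$, and a Chernoff-type bound on the remaining $\Theta(C_1\log n)$ vertices of $N_1$ whose parent is good. The only cosmetic differences are that the paper first establishes the ``at most one bad parent'' event simultaneously for all $w$ before running the Chernoff argument, and it retains the slightly sharper constant $\tfrac12 C_1\log n$ (rather than your $\tfrac13 C_1\log n$) for the number of surviving vertices of $N_1$; neither difference affects the argument.
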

		
		Before proving this claim, let us quickly explain how to finish the argument then. Using Lemma~\ref{lem:haxell} analogously to Case 2.1
		we can extend the embedding to the tree $T_2$. Afterwards,
		we embed the leaves of $T$ as follows. We again set
		$U := g(N_T(L(T)))$ and $W := V \setminus g(V(T_2))$, 
		but this time we do not
		embed the vertices of $R^\ast\setminus g(V(T_2))$ separately. 
		In this case, by the above claim,
		we know that even these vertices have degree
		at least $2C_0\log(n)$ into $g(N_1)\subset U$, 
		and hence, without updating $U$ and $W$, 
		the properties (i)--(iii) can be checked as in Case 2.1.
		Thus, we can finish the star matching with appropriate sizes for the stars and finish the embedding of $T$.
		
		\smallskip
		
		Hence, it remains to prove Claim~\ref{clm:small.tree.random}.
		For this, we embed $T_1$ in a random way into $V_1$:
		Let $t:=v(T_1)$, and fix an arbitrary ordering 
		$v_1,v_2,\ldots,v_t$ of the vertices of $T_1$ such that 
		every vertex $v_i$ has exactly one
		neighbor of smaller index; denote this neighbor $v_i^-$.
		Furthermore, set $T_i:=T[\{v_1,\ldots,v_i\}]$.	
		We consider the following simple randomized embedding $g$: 
		Choose $g(v_1)\in V_1$ uniformly at random.
		Then, for $i\in \{2,\ldots, t\}$,
		choose $g(v_i)$ uniformly at random from the
		set $(N_G(g(v_i^-))\cap V_1)\setminus g(V(T_{i-1}))$.
		
		We first observe that surely we succeed in embedding $T$. Indeed,
		in each step of the algorithm we have 
		\begin{align*}
			|(N_G(g(v_i^-))\cap V_1)\setminus g(V(T_{i-1}))|
			\geq |(N_G(g(v_i^-))\cap V_1)| - |g(V(T_{i-1}))| 
			\geq \alpha n - \delta n 
			\geq 0.5\alpha n,
		\end{align*}
		and we thus never run out of candidates for embedding a vertex $v_i$. 
		
		\smallskip
		
		Hence, it remains to check that a.a.s.
		\begin{enumerate}
			\item[(D)] for every $w\in V(G)\setminus g(V(T_1))$ we get
			$d_G(w,g(N_1))\geq 2C_0\log(n)$.
		\end{enumerate}
		In order to do so, let us define $P_1:=\{v^-:~ v\in N_1\}$ as the set of parents of the vertices in $N_1$, and note that $|P_1|\leq |N_1|\leq C_1\log(n)$.
		Moreover, for a vertex $w\in V(G)$ say that $v\in P_1$ is \textit{bad} in the embedding $g$ if $|N_G(g(v))\cap N_G(w)\cap V_1|<\alpha n$.
		We first prove that a.a.s.~the following holds:
		\begin{enumerate}
			\item[(B)] for every vertex $w\in V(G)$ there is at most one bad vertex in $P_1$ throughout the embedding process.
		\end{enumerate}
		Fix any $w\in V(G)$. Whenever we embed a vertex $v\in P_1$,
		we have a candidate set of size at least $0.5\alpha n$,
		as shown above. However, because of property (3), 
		there are at most $\log(n)$ candidates whose choice would make
		$v$ a bad vertex for $w$.
		Hence, the probability that $v$ becomes bad for $w$ is bounded
		by $\frac{2\log(n)}{\alpha n}$. It follows that the probability 
		that at least two vertices in $P_1$ become bad for $w$ is bounded by
		$
		\binom{|P_1|}{2}\cdot \left( \frac{2\log(n)}{\alpha n} \right)^2 < n^{-1.5},
		$
		provided $n$ is large enough. Hence, doing a union bound over all vertices $w\in V(G)$, it follows that (B) fails with probability at most $n\cdot n^{-1.5}=o(1)$.
		
		\smallskip
		
		From now on, let us condition on (B), and prove that (D) a.a.s.~holds. Again, fix any $w\in V(G)$.
		Since  by the assumption of Case 2.2 no vertex in $T_1$ is adjacent with more than $0.5C_1\log(n)$ vertices from $N_1$,
		it follows that the parent of at least $\lfloor 0.5C_1\log(n)\rfloor $ vertices 
		$v\in N_1$ is not bad for $w$ (where these parents do not need to be distinct), e.g.
		$|N_G(g(v^-))\cap N_G(w)\cap V_1|\geq \alpha n$.
		Therefore, whenever we embed one of these vertices 
		from $N_1$ into $V_1$, say it is a vertex $v_i$,
		then the probability that it ends up in $N_G(w)$ is
		at least 
		$$
		\frac{|(N_G(g(v_i^-))\cap N_G(w)\cap V_1)\setminus g(V(T_{i-1}))|}{%
			|(N_G(g(v_i^-))\cap V_1)\setminus g(V(T_{i-1}))|}
		\geq
		\frac{\alpha n - \delta n}{n} \geq 0.5\alpha ,
		$$
		and this bound of $0.5\alpha $ holds independently of
		the embeddings of other vertices from $N_1$.
		Thus, the random variable $X_w$ counting the number of vertices from
		$N_1$ ending up in $N_G(w)$ 
		stochastically dominates the binomial
		random variable
		$X\sim\text{Bin}(\lfloor 0.5C_1\log(n) \rfloor, 0.5\alpha )$
		with expectation about $0.25\alpha C_1\log(n) \geq 4C_0\log(n)$.
		By Lemma~\ref{lem:Chernoff} we conclude
		$$
		\Prob(X_w<2C_0\log(n))
		\leq
		\Prob(X < 0.5 \Exp(X))
		\leq 
		e^{-\frac{1}{8}\Exp(X)}
		<
		e^{-0.03\alpha C_1\log(n)}
		<
		e^{-2\log(n)},
		$$
		by the choice of $C_1$. Now, doing a union bound over all 
		$w\in V(G)$, we see that (D) fails with probability at most
		$ne^{-2\log(n)} = o(1)$.
	\end{proof}

	\bigskip

	\section{Maker's strategy}\label{sec:maker.strategy}
	
	\begin{proof}[Proof of Theorem~\ref{thm:MB.tree.universal}]
		Maker's goal it to occupy a graph $M$ which
		satisfies the properties (1)--(5) from 
		Theorem~\ref{thm:universal_new},
		with $\alpha,\gamma,c,C_0$ being chosen in an 
		appropriate way,
		as this theorem then ensures that Maker claims a
		graph as required.
		
		Choose $\alpha := 10^{-8}$ and $C_0 := 2000$,
		let $\gamma'$ and $c$ be given according to 
		Theorem~\ref{thm:universal_new},
		and let $\gamma := \min\{\gamma',10^{-5}\}$. 
		Whenever necessary, we assume that $n$ is large enough.
		Maker's strategy consists of two main stages
		that split into several subgames in which 
		she cares about the required properties (1)--(5) 
		of Theorem~\ref{thm:universal_new} separately.
		If at any point in the game she is unable to follow her strategy, she 
		forfeits the game (we will later see that this does not happen). In 
		the following, we will first describe the overall strategy. In the 
		strategy discussion we will then show that Maker can 
		follow the proposed strategy and occupy a graph 
		with the properties (1)--(5).
		
		\smallskip
		
		\textbf{Strategy description:} Maker's strategy consists of two main stages
		between which there is an additionally preparatory step in which no move is made,
		but the free edges are partitioned in a suitable way into several subboards.
		
		\medskip	
		
		\textbf{Stage I:} This stage consists of two substages:
		
		\begin{enumerate}
			\item[] {\bf Stage I.a:} Maker chooses an arbitrary vertex $x^*$ and claims 
			edges incident to $x^*$ until $N_M(x^*) = \lfloor 25C_0 \log(n)\rfloor $. 
			Let $S^* := N_M(x^*)$ at the end of this substage.
			\item[] {\bf Stage I.b:}  Afterwards consider all vertices $v \in V$ 
			with $d_B(v,S^*) > C_0 \log(n)$. 
			Let $R^*=\{v_1,v_2,\ldots,v_r\}$ be the union of those vertices. 
			For every $i\in [r]$, in the $i^{\text{th}}$ round of this substage,		
			Maker claims the edge $v_ix^*$ if possible, 
			otherwise she claims an edge $v_is_{v_i}$ such that 
			$s_{v_i}\in S^*$ and $d_M(s_{v_i})=1$. 
			Details are given in the strategy discussion.
		\end{enumerate}
		
		\textbf{Preparatory step:}
		Fix a partition $V = V_1 \cup V_2$ with $|V_2| = 500 \lfloor \gamma n \rfloor$ 
		and such that $K_n[V_2]$ does not contain any edges claimed so far by 
		Maker or Breaker. Moreover, find a partition of the graph induced by
		$E_F(K_n) \setminus E_F(V_2)$ into five graphs
		$G_1 \cup G_2 \cup G_3 \cup G_4 \cup G_5$ such that all of the following properties hold:
		
		\begin{enumerate}
			\item[(G1)] For every $v \in V\setminus (R^\ast\cup S^\ast)$ it holds that $d_{G_1}(v,S^*) > 4 C_0 \log(n)$.
			\item[(G2)] For every $v \in V_1$  it holds that $d_{G_2}(v,V_2) > 80 \gamma n$.
			\item[(G3)] For any two disjoint sets $A \subset V_1, B \subset V$ of sizes $|A| = |B| = \lfloor C_0 \log(n) \rfloor$ it holds that $e_{G_3}(A,B) \geq 0.1 C_0^2 (\log(n))^2$.
			\item[(G4)] For every $v \in V$ it holds that $d_{G_4}(v,V_1) > \frac{n}{10}$.
			\item[(G5)] For any two disjoint sets $A\subset V_1,B \subset V$ of sizes $|A| = \frac{n}{40}, |B| = \lfloor \log(n) \rfloor$ it holds that $e_{G_5}(A,B) > \frac{n \log(n)}{250}$.
		\end{enumerate}
		Details on why such a partition exists are given in the strategy discussion.
		
		\medskip
		
		\textbf{Stage II:} We split Stage II into six subgames which are played simultaneously on disjoint boards. During this stage, whenever Breaker claims an edge in one of the boards, Maker reacts on the same board by claiming one edge according to the correspondent strategy. Only in case that there is no free edge
		left of the relevant board, Maker claims an arbitrary free edge of another board.
		
		Maker's boards and goals in these subgames are described in the following. 
		
		\begin{enumerate}
			\item[] {\bf Subgame 1:} Playing on $G_1$, Maker ensures that by the end of the game for every $v \in  V\setminus (R^\ast\cup S^\ast)$ it holds that $d_M(v,S^*) \geq 2 C_0 \log(n)$.
			\item[] {\bf Subgame 2:} Playing on $G_2$, Maker ensures that by the end of the game for every $v \in V_1$ it holds that $d_M(v,V_2) \geq 40 \gamma n$.
			\item[] {\bf Subgame 3:} Playing on $G_3$, Maker claims at least one edge between any two disjoint sets $A \subset V_1$ and $B \subset V$ of sizes $|A| = |B| = \lfloor C_0 \log(n) \rfloor$.
			\item[] {\bf Subgame 4:} Playing on $G_4$, Maker ensures that by the end of the game for every $v \in V$ it holds that $d_M(v,V_1) \geq \frac{n}{40}$.
			\item[] {\bf Subgame 5:} Playing on $G_5$, Maker ensures that by the end of the game for any two disjoint sets $A\subset V_1,B \subset V$ of sizes $|A| = \frac{n}{40}, |B| = \lfloor \log(n) \rfloor $ it holds that $e_M(A,B) > \frac{n \log(n)}{2500}$.
			\item[] {\bf Subgame 6:} Playing on $E(V_2)$, Maker considers two substages. 
			
			\textit{Substage I:} Within at most $n^{1.8}$ rounds on $E(V_2)$, 
			Maker creates a $K_5$-factor on $V_2$ and simultaneously makes sure 
			that for every $v \in V_2$ it holds that 
			$d_B(v,V_2) \leq 0.4|V_2| + d_M(v,V_2)$. 
			Let $\mathcal{K}$ be the collection of $K_5$-copies that form the
			$K_5$-factor at the end of this substage, 
			let $\mathcal{K}_{bad}$ denote a subset of $\lfloor \gamma n \rfloor$ 
			of these $K_5$-copies with the most adjacent Breaker edges within $V_2$,
			and let $\mathcal{K}_{good}=\mathcal{K}\setminus \mathcal{K}_{bad}$.
			
			\textit{Substage II:} Maker makes sure that by the end of the game
			\begin{enumerate}
				\item[(i)]	every vertex $v$ which belongs to a clique in 
				$\mathcal{K}_{bad}$
				satisfies $d_M(v,V_2)\geq 40\lfloor \gamma n \rfloor$, 
				\item[(ii)] for every clique $K\in \mathcal{K}_{good}$
				there are at most $\gamma n$ cliques
				$K'\in \mathcal{K}_{good}$ such that $M$ does not have
				a matching of size 3 between $V(K)$ and $V(K')$.
			\end{enumerate}
		\end{enumerate}

		The details on how Maker can achieve these goals will be given in the 
		strategy discussion.
		
		\smallskip
		
		Before coming to the strategy discussion let us first check that,
		if Maker can follow the strategy without forfeiting the game 
		and can reach all the described goals,
		her graph $M$ fulfils the properties (1)--(5) of 
		Theorem~\ref{thm:universal_new} by the end of the game. 
		Property~(1) is true by the partition from the preparatory step.
		(2.a) is ensured in Stage I.a.
		For property (2.b) note that $|R^\ast|\leq 25$, by the definition
		of $R^\ast$ and since Stage I.a lasts $\lfloor 25C_0\log(n)\rfloor$ rounds.
		The rest of (2.b) follows from the goal of Stage I.b.
		Moreover, (2.c) is ensured in the Subgame 1.
		Property (3) is given by the following reason: For any $v\in V$
		we have $d_M(v,V_1)>\frac{n}{40}$ by Subgame~4, and hence, by the outcome
		of Subgame 5, there are less than $\log(n)$ vertices 
		which have at most $\alpha n$ neighbours into $N_M(v)\cap V_1$.
		Property (4) is obtained in the Subgame 3. 
		For property (5.a) note that in the first substage of Subgame 6, 
		we obtain $\mathcal{K}$, and a partition 
		$\mathcal{K}=\mathcal{K}_{good}\cup \mathcal{K}_{bad}$ as desired.
		Property (5.b) follows from the outcome of Subgame 2 and
		(i) in Subgame 6; property (5.c) is ensured by (ii) in Subgame 6.
		
		\smallskip
		
		\textbf{Strategy discussion:}
		We discuss all of the stages separately.
		
		\textbf{Stage I:} Maker can clearly follow Stage I.a, provided $n$ is
		large enough. So, consider Stage I.b from now on.
		Note that $e(B)=|S^\ast|$ and $d_M(s)=1$ for every $s\in S^\ast$
		when Maker enters Stage I.b.
		If $r=|R^\ast|=1$, then between the unique vertex $v_1\in R^\ast$
		and the set $\{x^\ast\}\cup S^\ast$ there must be at least one
		free edge. Hence, Maker can play as suggested.
		If otherwise $r>1$ then, at the beginning of this stage,
		every vertex $v_i\in R^\ast$ satisfies
		$d_B(v_i,S^\ast)<e(B) - C_0\log(n) = |S^\ast| - C_0\log(n),$
		and hence, Maker in each of the at most 25 rounds of Stage I.b 
		can find a vertex $s_{v_i}\in S^\ast$ as described by the strategy
		and such that $v_is_{v_i}$ is still free,
		if $v_ix^\ast$ is already blocked by Breaker.
		
		\smallskip
		
		\textbf{Preparatory step:}
		Provided $n$ is large enough, it is clear that we can find a partition 
		$V = V_1 \cup V_2$ with $|V_2| = 500 \lfloor \gamma n \rfloor$ 
		and such that $K_n[V_2]$ does not contain any edges claimed by 
		Maker or Breaker during Stage I, since so far at most $25 C_0 \log(n) + 25$ 
		rounds have been played and hence at most $51C_0\log(n)$ edges are claimed. 
		In order to show that there is a partition 
		$G_1\cup G_2\cup G_3\cup G_4\cup G_5$
		of the graph induced by 
		$E_F(K_n) \setminus E(V_2)$  
		as desired, we show that a randomly chosen partition 
		$G_1\cup G_2\cup G_3\cup G_4\cup G_5$
		a.a.s.~satisfies the properties (G1)--(G5).
		To be more precise, for each edge $e \in E_F(K_n) \setminus E(V_2)$ 
		we decide independently with equal probability $1/5$ in which of the graphs 
		$G_1,G_2,G_3,G_4,G_5$ it will be included.
		We consider each of the desired properties separately.
		
		\begin{itemize}
			\item[(G1)] Let $v \in V\setminus(S^* \cup R^*)$, then $d_M(v,S^\ast)=0$
			at the end of Stage I. Moreover, we have $d_B(v,S^\ast)\leq C_0\log(n) + 25$,
			as $v\notin R^\ast$ and Stage I.b lasts at most 25 rounds.
			Hence $d_F(v,S^\ast)>23C_0\log(n)$.
			For the random variable $X^{G_1}_v = d_{G_1}(v,S^*)$
			we have $X^{G_1}_v \sim \text{Bin} \left(d_F(v,S^\ast), \frac{1}{5} \right)$ 
			with expectation $\mathbb E(X^{G_1}_v) = 0.2d_F(v,S^\ast) > 4.6C_0\log(n)$. 
			Applying Chernoff (Lemma~\ref{lem:Chernoff}) we find that 
			$\mathbb{P}\left(d_{G_1}(v,S^*) < 4 C_0 \log(n)\right) 
			< \exp \left( - 0.05 C_0 \log(n) \right) $. 
			With a union bound over all $v \in V\setminus(S^* \cup R^*)$ 
			we see that (G1) fails with probability at most 
			$n \exp (-0.05 C_0 \log(n)) = o(1)$, by the choice of $C_0$.	
			
			\item[(G2)] Let $v\in V_1$. At the end of 
			Stage I we have $d_F(v,V_2)>|V_2|-e(B)>495\gamma n$, provided $n$ is large.
			Hence, for the random variable $X^{G_2}_v = d_{G_2}(v,V_2)$
			we have $X^{G_2}_v \sim \text{Bin} \left(d_F(v,V_2), \frac{1}{5} \right)$  
			with expectation
			$\mathbb E(X^{G_2}_v) > 99 \gamma n $. 
			Applying Chernoff (Lemma~\ref{lem:Chernoff}) and union bound as before,
			we see that (G2) fails with probability $o(1)$.
			
			\item[(G3)] Let $A \subset V_1$ and $B \subset V$ be disjoint sets
			of size $\lfloor C_0\log(n) \rfloor$. Then
			$e_F(A,B)\geq |A|\cdot|B| - 51C_0\log(n) > 0.99 C_0^2(\log(n))^2$,
			provided $n$ is large enough.
			For the random variable $X^{G_3}_{A,B} = e_{G_3}(A,B)$
			we have $X^{G_3}_{A,B} \sim \text{Bin} \left(e_F(A,B), \frac{1}{5} \right)$ 	
			with expectation $\frac15 e_F(A,B) \geq 0.19 C_0^2(\log(n))^2$. 
			Applying Chernoff (Lemma~\ref{lem:Chernoff}) and a union bound over all 
			possible pairs of sets $A,B$ we find that (G3) fails with probability $o(1)$.
			
			\item[(G4)] This can be verified analogously to (G2), using that
			for every $v\in V$ we have
			$d_F(v,V_1)\geq |V_1| - 51C_0\log(n) \geq n - 501 \gamma n > 0.9 n$,
			provided $n$ is large enough.
			
			\item[(G5)] This can be verified analogously to (G3), using that
			for every $A\subset V_1$, $B\subset V$ of sizes $|A|=\frac{n}{40}$
			and $|B|= \lfloor \log(n) \rfloor$ we have
			$d_F(A,B)\geq |A|\cdot |B| - 51C_0\log(n) \geq \frac{n \log(n)}{41}$,
			provided $n$ is large enough.
		\end{itemize}
		
		\textbf{Stage II:} We discuss the 6 subgames separately.
		
		\begin{enumerate}
			\item[] \textbf{Subgame 1:}
			Maker can reach her goal by (G1) and simply using a pairing strategy
			for the edges in $E_{G_1}(v,S^\ast)$ for every vertex 
			$v\in V\setminus (R^\ast\cup S^\ast)$.
			\item[] \textbf{Subgame 2:}
			Maker can reach her goal by (G2) and simply using a pairing strategy
			for the edges in $E_{G_2}(v,V_2)$ for every vertex 
			$v\in V_1$.
			\item[] \textbf{Subgame 3:}
			Maker can reach her goal by (G3) and the Erd\H{o}s-Selfridge-Criterion 
			(see Lemma~\ref{lem:ES-criterion}) as follows: Consider the family
			$$\mathcal{F}_3 := \{E_{G_3}(A,B): A \subset V_1, B \subset V,
			|A|=|B| = \lfloor C_0 \log(n) \rfloor, A \text{ and } B \text{ disjoint} \},$$
			and note that for large enough $n$, using (G3), we get
			$$
			\sum_{F \in \mathcal{F}_3} 2^{-|F|+1} \leq
			(n^{C_0\log(n)})^2\cdot 2^{-0.1C_0^2(\log(n))^2+1}
			\leq e^{2 C_0 (\log(n))^2 - 0.1 C_0^2 (\log(n))^2+1} < 1 ,
			$$
			by the choice of $C_0$. Hence Maker (taking the role of Breaker)
			can claim an edge of every edge set in $\mathcal{F}_3$.
			
			\item[] \textbf{Subgame 4:} Let $H_4\subset G_4$ be the graph
			induced by all edges of $G_4$ that intersect $V_1$, and note that
			by (G4), $\delta(H_4)>\frac{n}{10}$. Maker can reach her goal 
			by an application of Lemma~\ref{lem:MinDeg} to $H_4$.
			
			\item[] \textbf{Subgame 5:}
			Maker can reach her goal by (G5) and the criterion 
			in Lemma~\ref{lem:criterion.multistage} as follows: We 
			choose $\delta := \frac15$ and consider the family
			$$\mathcal{F}_5 = \{E_{G_5}(A,B) : 
			A \subset V_1, B \subset V, |A| = \frac{n}{40}, |B| = \lfloor \log(n) \rfloor, A \text{ and } B \text{ disjoint} \}.$$
			Then
			$k := \min_{F \in \mathcal{F}_5} |F| > \frac{n \log(n)}{250} > 100 \log(4^n) > 4\delta^{-2} \log(|\mathcal{F}_5|)$ for large enough $n$. Thus, by
			Lemma~\ref{lem:criterion.multistage}, Maker can claim at least 
			$\frac{3}{10} \cdot \frac{n \log(n)}{250} > \frac{n \log(n)}{2500}$ edges 
			in each set of $\mathcal{F}_5$.
			
			\item[] \textbf{Subgame 6:}
			For Substage I, Maker plays on $K_n[V_2]$
			alternating between the strategies
			from Lemma~\ref{lem:Deg} and Lemma~\ref{lem:k5},
			applied with $b:=2$ (since Breaker claims two
			edges between any two moves of Maker for any of these lemmas). 
			Lemma~\ref{lem:k5} ensures that Maker obtains a $K_5$-factor
			on $V_2$ before $n^{1.8}$ rounds are played on $V_2$.
			Lemma~\ref{lem:Deg} ensures that throughout this stage, $d_B(v,V_2) \leq 0.4|V_2| + d_M(v,V_2)$ holds for every $v\in V_2$, because of the following reason: Assume that Breaker could reach
			$d_B(v,V_2) > 0.4|V_2| + d_M(v,V_2)$ for some vertex $v\in V_2$ at some point, then by claiming only edges in $E_F(v,V_2)$ from that moment on,
			Breaker could maintain the inequality $d_B(v,V_2) > 0.4|V_2| + d_M(v,V_2)$, eventually leading to
			$d_B(v,V_2) \geq 0.7|V_2|$ by the end of the game, contradicting
			Lemma~\ref{lem:Deg} for large enough $n$.

			At the end of Substage I, let 
			$\mathcal{K}=\mathcal{K}_{good}\cup \mathcal{K}_{bad}$
			be given according to the strategy description,
			and let $V_{good}$ and $V_{bad}$ be the vertices 
			of all cliques in $\mathcal{K}_{good}$ and $\mathcal{K}_{bad}$, respectively.
			Moreover, let
			$$\mathcal{K}_{good}^{(2)}:=\{(K,K'): K,K'\in \mathcal{K}_{good},~ K\neq K',~ e_B(V(K),V(K'))=0\}.$$
			Note that, since Breaker has at most $n^{1.8}$ edges
			within $V_2$ at the end of Substage I, 
			and by definition of
			$\mathcal{K}_{bad}$, we have that for every $K\in \mathcal{K}_{good}$
			there are less than $\gamma n$ cliques $K'\in \mathcal{K}_{good}$
			such that $(K,K')\notin \mathcal{K}_{good}^{(2)}$.

			Now, in Substage II, Maker considers the disjoint boards 
			$E_F(V_{good})$ and $E_F(V_{good},V_{bad})$,
			and always makes her move on the same board that Breaker made his previous move on.
			On the first board $E_F(V_{good})$, Maker makes sure to get a matching of 
			size 3 between any pair $(K,K')\in \mathcal{K}_{good}^{(2)}$.
			Note that this is done easily, 
			since no edge between $V(K)$ and $V(K')$ is blocked by
			Breaker yet. This way, property (ii) of Substage II is ensured.
			On the second board $E_F(V_{good},V_{bad})$, Maker plays a degree game as follows:
			Let $v$ belong to a clique in $\mathcal{K}_{bad}$.
			If by the end of Substage I, we already have $d_M(v,V_2)\geq 40\gamma n$,
			then there is nothing to be done. Otherwise, by the outcome of Substage I,
			it holds that 
			$$d_B(v,V_2)\leq 0.4|V_2| + d_M(v,V_2) <240\gamma n.$$ 
			Then, by pairing the edges in $E_F(v,V_{good})$, 
			Maker can ensure to get 
			$d_M(v,V_2)\geq  40\gamma n.$
			Hence, property (i) of Substage II is ensured as well. \qedhere
		\end{enumerate}
	\end{proof}
	
	\bigskip

	\section{Waiter's strategy}\label{sec:waiter.strategy}
	
	\begin{proof}[Proof of Theorem~\ref{thm:WC.tree.universal}]
		Waiter's goal it to force Client to occupy a graph 
		$C$ which
		satisfies the properties (1)--(5) from 
		Theorem~\ref{thm:universal_new},
		with $\alpha,\gamma,c,C_0$ being chosen in an 
		appropriate way,
		as this theorem then ensures that $C$
		contains a copy of every tree $T$ on $n$
		vertices with $\Delta(T) \leq \frac{cn}{\log(n)}$.
		
		Choose $\alpha := 10^{-8}$ and $C_0 := 2000$,
		let $\gamma'$ and $c$ be given according to 
		Theorem~\ref{thm:universal_new},
		and let $\gamma := \min\{\gamma',10^{-5}\}$. 
		Whenever necessary, we assume that $n$ is large enough.
		Waiter's strategy consists of five stages
		in which she cares about the required properties 
		(1)--(5). 
		If at any point in the game she is unable to follow her 
		strategy, she forfeits the game 
		(we will later see that this does not happen). 
		In the following, we will first describe the overall 
		strategy. In the strategy discussion we will then show 
		that Waiter can 
		follow the proposed strategy and force a graph 
		with the properties (1)--(5).
		
		\smallskip	
		
		\textbf{Strategy description:}
		Waiter's overall strategy consists of five stages,
		and a preparatory step between Stage II and Stage III,
		in which no move is made,
		but the free edges are partitioned in a suitable way into several subboards.
		Before the game starts, fix a partition $V=V_1\cup V_2$ such that $|V_2| = 500 \lfloor \gamma n \rfloor$, and fix an equipartition 
		$V_2=V_{2,1}\cup \ldots \cup V_{2,100}$, 
		i.e.~$|V_{2,j}|=5\lfloor \gamma n \rfloor$ for every $j\in [100]$.	
		
		\smallskip
		
		\textbf{Stage I:} Waiter chooses an arbitrary vertex $x^* \in V_1$. Offering only edges in $E_{K_n}(V_1)$ incident to $x^*$ for $\lfloor 25 C_0 \log(n) \rfloor$ turns she creates a star with center $x^*$. Once this is done, let $S^* := N_C(x^*)$ and $R^* = \emptyset$. Afterwards she continues with Stage II.
		
		\smallskip	
		
		\textbf{Stage II:} This stage consists of two substages:
		\begin{itemize}
			\item[] {\bf Stage II.a:} 
			For every $j\in [100]$, Waiter forces a $K_5$-factor 
			on the graph $K_n[V_{2,j}]$. 
			The details will be given in the strategy discussion.
			Having done so, label 
			the $K_5$-copies in these factors with $K_{i,j}$ 
			and $i\in [\lfloor \gamma n \rfloor]$. Let 
			$\mathcal{K} := \{K_{i,j}:~ i\in [\lfloor \gamma n \rfloor],
			~j\in [100]\}$.
			\item[] {\bf Stage II.b:} For every $j_1\neq j_2$ and 
			$i_1,i_2\in [\lfloor \gamma n \rfloor]$, Waiter forces a 	
			perfect matching between $V(K_{i_1,j_1})$ and 
			$V(K_{i_2,j_2})$. Details will be given in the 
			strategy discussion. Afterwards, Waiter 
			does a preparatory step and afterwards proceeds
			with Stage III.
		\end{itemize}	
		
		\smallskip
		
		\textbf{Preparatory step:}
		Fix a partiton of the graph induced by $E_F(K_n)\setminus E_{K_n}(V_2)$
		into $G_1 \cup G_2 \cup G_3 \cup G_4$ such that the following holds:
		\begin{itemize}
			\item[(G1)] For every pair of vertices $v,w\in V$ it holds that 
			$|N_{G_1}(v)\cap N_{G_1}(w)\cap V_1|\geq 0.05 n$.
			\item[(G2)] For every $v\in V_1$ we have 
			$d_{G_2}(v,V_2)>80\lfloor \gamma n \rfloor$.
			\item[(G3)] For every disjoint sets $X\subset V_1,Y\subset V$ 
			of size $|X|=|Y|=m:=\lfloor C_0 \log(n)\rfloor$, it holds that 
			$e_{G_3}(X,Y)> 0.2 m^2$.
			\item[(G4)] For every $v \in V \setminus (S^*\cup \{x^\ast\})$
			it holds that $d_{G_4}(v,S^*) \geq 4 C_0 \log(n)$.
		\end{itemize}
		
		Details on why such a partiton exists are given in the strategy discussion.
		
		\smallskip
		
		\textbf{Stage III:} Playing on $G_1$, Waiter 	
		forces Client's graph to satisfy that $|N_C(v)\cap N_C(w)\cap V_1|\geq \alpha n$ holds for every $v,w \in V$. The details on how Waiter can achieve this goal will be given in the strategy discussion. Waiter proceeds with Stage IV.
		
		\medskip
		
		\textbf{Stage IV:} Playing on $G_2$, Waiter 
		forces Client's graph to satisfy that for every $v \in V_1$ it holds that $d_C(v,V_2) \geq 40 \lfloor \gamma n \rfloor $. The details on how Waiter can achieve this goal will be given in the strategy discussion. Waiter proceeds with Stage V.
		
		\medskip
		
		\textbf{Stage V:} Playing on $G_3$, Waiter 
		forces Client's graph to satisfy that between every two disjoint sets 
		$A\subset V_1$ and $B\subset V$ of size
		$\lfloor C_0\log(n) \rfloor$ there is an edge in $C$. The details on how 
		Waiter can achieve this goal will be given in the strategy 
		discussion. 
		
		\medskip
		
		\textbf{Stage VI:} Playing on $G_4$, for every 
		$v\in V\setminus (S^\ast \cup \{x^\ast\})$
		Waiter ensures that by the end of this stage it holds that 
		$d_C(v,S^*) \geq 2 \lfloor C_0 \log(n) \rfloor$. 
		The details on how 
		Waiter can achieve this goal will be given in the strategy 
		discussion. 
		
		\smallskip
		
		Now, before coming to the strategy discussion, let us first check that,
		if Waiter can follow the strategy without forfeiting the game 
		and can reach all the described goals,
		the final Client's graph $C$ fulfils the properties (1)--(5) of 
		Theorem~\ref{thm:universal_new}. 
		
		Property~(1) is true by the initial partition of $V$.
		(2.a) and (2.b) are ensured in Stage I,
		while (2.c) is given by the outcome of  Stage VI.
		Property (3) is obtained in Stage III,
		and property (4) is done in Stage V.
		Moreover, we can distribute the collection of cliques $\mathcal{K}$ arbitrarily into $\mathcal{K}=\mathcal{K}_{good}\cup \mathcal{K}_{bad}$
		such that $|\mathcal{K}_{bad}|=\lfloor \gamma n \rfloor$. 
		This way, (5.a) holds trivially. For property (5.b) note that by the outcome of Stage IV every vertex $v\in V_1$ satisfies $d_C(v,V_2) > 40 \lfloor \gamma n \rfloor$. Moreover,
		using the matchings from Stage II.b,
		we also obtain such a bound for every vertex $v$ belonging to $\mathcal{K}_{bad}$. Finally,
		property (5.c) follows from Stage II.b.
		
		\medskip
		
		\textbf{Strategy discussion:}
		We discuss all of the five stages separately. Note that the boards of these different stages are disjoint from each other.
		
		\smallskip	
		
		\textbf{Stage I:} Waiter can easily follow this strategy for large 
		enough $n$.
		
		\smallskip
		
		\textbf{Stage II:} Waiter can force a $K_5$-factor 
		on each $K_n[V_{2,j}]$ by Theorem~1.2 in~\cite{dvovrak2023waiter}. 
		Let
		$\mathcal{K} = \{K_{i,j}:~ i\in [\lfloor \gamma n \rfloor],
		~j\in [100]\}$
		be the set of cliques in the union of these $K_5$-factors.
		Additionally, for every $j_1\neq j_2$ and 
		$i_1,i_2\in [\lfloor \gamma n \rfloor]$, 
		let $G_{i_1,j_1,i_2,j_2}\subset K_n$ be the complete 
		bipartite graph between $V(K_{i_1,j_1})$ and $V(K_{i_2,j_2})$. 
		Then all edges
		of $G_{i_1,j_1,i_2,j_2}$ are still free when Waiter enters Stage~II, and for 
		different tuples $(i_1,j_1,i_2,j_2)\neq (i_1',j_1',i_2',j_2')$ 
		the graphs $G_{i_1,j_1,i_2,j_2}$ and $G_{i_1',j_1',i_2',j_2'}$
		are edge-disjoint. Hence, Waiter can apply the strategy
		from Lemma~\ref{lem:matching.K55} to each of the graphs 
		$G_{i_1,j_1,i_2,j_2}$ separately, and thus create a matching of size 5.
		
		\textbf{Preparatory step:}
		In order to show that there is a partition 
		$G_1\cup G_2\cup G_3\cup G_4$
		of the graph induced by 
		$E_F(K_n) \setminus E_{K_n}(V_2)$  
		as desired, we show that a randomly chosen partition 
		$G_1\cup G_2\cup G_3\cup G_4$
		a.a.s.~satisfies the properties (G1)--(G4).
		To be more precise, for each edge $e \in E_F(K_n) \setminus E_{K_n}(V_2)$ 
		we decide independently with equal probability $1/4$ in which of the graphs 
		$G_1,G_2,G_3,G_4$ it will be included.
		We consider each of the desired properties separately.
		
		\begin{itemize}
			\item[(G1)] Let $v,w \in V$, then
			$|N_{K_n}(v)\cap N_{K_n}(w)\cap V_1| 
			= |V_1\setminus \{v,w\}|$. Since so far
			edges intersecting $V_1$ have only been claimed in
			Stage I, we conclude that
			$$|N_{F}(v)\cap N_{F}(w)\cap V_1| 
			\geq n - 500\gamma n - 2 - 50C_0\log(n) 
			> 0.99n ,$$
			by the choice of $\gamma$, for large $n$.
			For the random variable 
			$X^{G_1}_{v,w} = |N_{G_1}(v)\cap N_{G_1}(w)\cap V_1|$
			we have 
			$X^{G_1}_{v,w} \sim \text{Bin} \left(%
			|N_{F}(v)\cap N_{F}(w)\cap V_1|, \frac{1}{16} \right)$ 
			with expectation 
			$\mathbb E(X^{G_1}_{v,w}) >  0.06n$. 
			Applying Chernoff (Lemma~\ref{lem:Chernoff}) and
			union bound we find that (G1) fails with probability $o(1)$.	
			
			\item[(G2)] Checking (G2) can be done analogously,
			using that for every $v\in V_1$, we have
			$d_F(v,V_2)\geq |V_2| - 50C_0\log(n) > 499\gamma n$.
			
			\item[(G3)]	This can be checked analogously to property (G3)
			in the proof of Theorem~\ref{thm:MB.tree.universal},
			using that for every
			every disjoint sets $X\subset V_1,Y\subset V$ 
			of size $|X|=|Y|=m=\lfloor C_0 \log(n)\rfloor$, 
			it holds that $e_{F}(X,Y) \geq 
			|X|\cdot |Y| - 50C_0\log(n) \geq 0.9 m^2$.
			
			\item[(G4)]	This can be checked analogously to property (G2),
			using that $d_F(v,S^\ast)=|S^\ast|$ for every 
			$v\notin S^\ast\cup \{x^\ast\}$.
		\end{itemize}

		\textbf{Stage III:} In this stage Waiter only offers edges of $G_1$ 	
		which are incident with $V_1$. Waiter plays the strategy from 
		Lemma~\ref{lem:large.common.degree} with $\beta=0.05$, 
		and $N_{\{v,w\}}:=N_{G_1}(v)\cap N_{G_1}(w)\cap V_1$
		for every set $\{v,w\}$ of two vertices in $V$.
		By Lemma~\ref{lem:large.common.degree}, Waiter then ensures that
		$|N_C(v)\cap N_C(w)\cap N_{\{v,w\}}| 
		\geq \frac{\beta n}{500} \geq \alpha n$ holds 
		for every $v,w\in V$.
		
		\textbf{Stage IV:} 
		By property (G2) it holds that 
		$d_{G_2}(v,V_2) > 80 \lfloor \gamma n \rfloor$ for every 
		$v \in V_1$.  With a simple pairing strategy,
		Waiter reaches the described goal.
		
		\textbf{Stage V:} Waiter reaches the described goal
		by an application of Theorem~\ref{thm:WC_transversal}.
		To be more precise, let
		$$
		\mathcal{F} := \{E_{G_3}(A,B): A \subset V_1, B \subset V,
		|A|=|B| = \lfloor C_0 \log(n) \rfloor, A \text{ and } B \text{ disjoint} \}.
		$$
		Using (G3) and the choice of $C_0$,
		we obtain
		$\sum_{F \in \mathcal{F}} 2^{-|F|+1} 
		\leq n^{2m} \cdot 2^{-0.2m^2+1} = o(1)$
		analogously to the discussion of Maker's strategy. In particular, 
		Waiter can force Client to claim an
		element of each edge set in $\mathcal{F}$.
		
		\textbf{Stage VI:} 
		By property (G4) it holds that 
		$d_{G_4}(v,S^\ast) > 4C_0\log(n)$ for every 
		$v \in V\setminus (S^\ast \cup \{x^\ast\})$.  
		Again, with a simple pairing strategy,
		Waiter reaches the described goal.
	\end{proof}

	\section{Concluding remarks}\label{sec:concluding}
	
	Though we presented no $\Delta$ such that Maker in the $(1:1)$ game on $K_n$ cannot obtain a graph which is universal for spanning trees of degree $\Delta$, we believe that the degree order $n/\log n$ in   Theorem~\ref{thm:MB.tree.universal} is optimal. More precisely, we pose the following conjecture.
	
	\begin{conjecture}
		There exists a constant $C>0$ such that the following holds for every large enough integer $n$.
		In the $(1:1)$ Maker-Breaker game on $K_n$,
		Breaker has a strategy such that Maker cannot build a graph which contains 
		a copy of every tree $T$ with $n$ vertices
		and maximum degree $\Delta(T)\leq \frac{Cn}{\log(n)}$.
	\end{conjecture}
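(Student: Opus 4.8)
The goal is to produce, for a sufficiently large constant $C$, a family of trees $T=T_n$ on $n$ vertices with $\Delta(T)\le\lceil Cn/\log(n)\rceil$ together with a Breaker strategy in the $(1:1)$ game on $K_n$ that forces Maker's final graph $M$ to contain no copy of $T$. The guiding heuristic is that $n/\log(n)$ is the order of the connectivity threshold bias, so a spanning tree of maximum degree well above $n/\log(n)$ ``ought'' to lie out of reach once one insists on \emph{all} such trees at once.

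The first thing I would try is a \emph{caterpillar}: with $t:=\lceil(\log n)/C\rceil$, take a spine $p_1p_2\cdots p_t$ and attach $\ell_i\approx Cn/\log(n)$ pendant leaves to each $p_i$ so that $\sum_i\ell_i=n-t$. Every non-spine vertex is a pendant leaf of exactly one $p_i$, so any embedding $g$ of $T$ into $M$ must inject those $n-t$ vertices into $\bigcup_iN_M(g(p_i))$ with $g(p_i)$ receiving $\ell_i$ of them; in particular $\{g(p_1),\dots,g(p_t)\}$ is a dominating set of $M$ of size $t$ (and satisfies the degree-constrained Hall condition $\bigl|\bigcup_{i\in S}N_M(g(p_i))\bigr|\ge\sum_{i\in S}\ell_i$ for all $S\subseteq[t]$). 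It would therefore suffice for Breaker to force the domination number $\gamma(M)$ to exceed $t=(\log n)/C$, and indeed $\gamma(G(n,1/2))=(1+o(1))\log_2 n>(\log n)/C$ as soon as $C>\ln 2$, so the random graph heuristic predicts this.

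The obstacle is that this reduction fails: a small dominating set is not something Breaker can deny. If at the start Maker commits to a pair $\{a,b\}$ and follows the pairing strategy matching $av\leftrightarrow bv$ for every $v\notin\{a,b\}$, then she occupies at least one of $av,bv$ for each such $v$, so $\gamma(M)=2$ and the plain caterpillar embeds; the same trick makes \emph{any} committed $t$-set dominating. So the hard tree must in addition pin the hub images down in a way that Breaker \emph{can} spoil. The natural remedy is to force the hubs to compete for a tight resource that Breaker can globally randomise: replace each pendant leaf by a short pendant path and interleave bare segments along the spine, so that the images $g(p_i)$ must not merely cover an almost-spanning set but also anchor a near-linear family of vertex-disjoint short paths with prescribed endpoints — which forces the $g(p_i)$ to have small pairwise $M$-co-degree. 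Against this Breaker plays a compound strategy: a discrepancy/degree game keeping $M$ ``balanced'' (all degrees $(\tfrac12\pm o(1))n$, all pair co-degrees $(\tfrac14\pm o(1))n$) together with a local min-degree game around the $O(\log n)$ committed vertices, and one then runs a win/win argument: if Maker keeps $M$ balanced she cannot carve out the low-co-degree, high-coverage hub structure the augmented tree demands, while if she sacrifices balance to build it she opens a different copy of $T_n$ to attack — so for $C$ a large enough absolute constant Breaker wins either way.

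The step I expect to be genuinely hard — and the reason the statement remains a conjecture — is making this dichotomy precise and quantitative. The classical tools do not suffice: the Erd\H{o}s--Selfridge criterion is silent here because for every fixed $t$-set Maker wins the underlying box game outright, and the standard min-degree/box-game machinery at best yields $\Delta(M)\le(\tfrac12+o(1))n$, which cannot block a hub of degree $Cn/\log(n)$. What is needed is a new positional-game estimate trading Maker's pairing/box-game flexibility against a covering-type invariant of $B$, coupled with a tree whose embedding provably certifies one side of that trade-off. The remaining ingredients — the explicit tree construction, the reductions to Hall's theorem and the star-matching lemma, and the routine degree computations — should be straightforward once such an estimate is available.
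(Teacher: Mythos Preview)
The statement you are addressing is a \emph{conjecture} in the paper, not a theorem; the paper offers no proof, only the heuristic remark that $n/\log(n)$ is the expected optimal order. So there is no ``paper's own proof'' against which to compare your attempt.

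Your proposal is, by your own admission, not a proof either. You correctly identify a natural candidate tree (the caterpillar with $\sim(\log n)/C$ spine vertices each carrying $\sim Cn/\log n$ leaves), correctly reduce its embeddability to the existence of a small dominating set in $M$, and then correctly demolish this reduction: Maker's pairing strategy on $\{av,bv\}$ forces $\gamma(M)=2$, so Breaker cannot prevent the caterpillar. From that point on you offer only a programme --- augment the tree to impose low-co-degree constraints on the hubs, have Breaker play a discrepancy game, and run a win/win --- together with the frank concession that ``making this dichotomy precise and quantitative'' is the genuinely hard step and that the standard tools (Erd\H{o}s--Selfridge, box games, min-degree games) do not reach it.

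That assessment is accurate, and your discussion of why the naive approaches fail is useful. But what you have written is a research plan with a clearly identified gap, not a proof; the central positional-game estimate you call for does not exist in your write-up, and you give no indication of how to obtain it. The conjecture remains open.
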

	
	In contrast, we believe that the maximum degree order in Theorem~\ref{thm:WC.tree.universal} can be improved. 
	
	\begin{conjecture}
		There exists a constant $c >0$ such that the following holds for every large enough integer $n$.
		In the $(1:1)$ Waiter-Client game on $K_n$,
		Waiter has a strategy to force Client to claim a graph which contains 
		a copy of every tree $T$ with $n$ vertices
		and maximum degree $\Delta(T)\leq c n$.
	\end{conjecture}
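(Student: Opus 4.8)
The plan is to follow the two-step scheme behind Theorem~\ref{thm:WC.tree.universal}: first isolate a purely deterministic condition on a graph $G$ forcing $G$ to contain every spanning tree of maximum degree at most $cn$, then show Waiter can force Client's graph to satisfy it. The tree side should again run through a dichotomy in the spirit of Lemma~\ref{lem:classify.trees}, but now without the $\log n$ factors: every $n$-vertex tree either has $\Omega(n)$ vertex-disjoint bare paths of some fixed length, or it has $\Omega(n)$ leaves. In the bare-path case the argument of Case~1 of Theorem~\ref{thm:universal_new} carries over essentially verbatim --- Waiter can still force Client a $K_5$-factor on a linear-sized part $V_2$ together with all the matchings between cliques needed there, exactly as in Stage~II of the proof of Theorem~\ref{thm:WC.tree.universal} --- so the whole difficulty lies in the many-leaves case.

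In that case the method behind Theorem~\ref{thm:universal_new} genuinely breaks down, because Client ends up with only about half of the edges of $K_n$ and hence average degree $\approx n/2$. Neither the greedy embedding of $T-L(T)$ used there (which needs degrees larger than $v(T-L(T))$, possibly close to $n$) nor Haxell's lemma (Lemma~\ref{lem:haxell}, whose conditions (P1)--(P2) require expanding sets of non-constant size by a factor $\Delta(T)$, impossible once $\Delta(T)$ is linear) survives; and the star-matching step collapses because condition~(iii) of Lemma~\ref{lem:star-matching} asks a linear-sized left-over set to see $\Omega(\log n)$ neighbours inside the set of leaf-parent-images, which for linear $\Delta(T)$ can have only constant size --- this is exactly the sub-case where $T$ is close to a bounded union of large stars. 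The new leverage that a Waiter-Client game provides is that Client's graph need not be merely pseudorandom: Waiter can \emph{engineer} structure that no dense quasirandom graph possesses, for instance a constant-sized set $D$ such that every other vertex is adjacent in $C$ to at least half of $D$ (offer, for each vertex $w$, the edges $\{wa:a\in D\}$ in pairs, so that Client is forced to keep one edge from each pair). So the plan for the many-leaves case is to embed the bounded-degree part of $T$ into $V_1$ using only constant-factor expansion, routing the $O(1)$ high-degree vertices of $T$ onto $D$, and then to place all the leaves by a defect-Hall argument whose hypotheses should come from a suitably strengthened version of this robust-domination property of $D$ together with the pair-degree and bipartite-density conditions (the analogues of (3) and (4) in Theorem~\ref{thm:universal_new}), all of which Waiter can still force via Theorem~\ref{thm:WC_transversal} and Lemma~\ref{lem:large.common.degree}.

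On the Waiter-strategy side I expect few surprises once the target condition is pinned down: forcing a $K_5$-factor and inter-clique matchings, forcing linear minimum degree, forcing the set $D$ as above, and forcing the ``$e_C(A,B)>0$ for all disjoint $A,B$ of logarithmic size''-type conditions via the Waiter-Client Erd\H{o}s--Selfridge criterion all fit the pattern already used for Theorem~\ref{thm:WC.tree.universal}. The main obstacle --- and the reason this is still a conjecture --- is the many-leaves case: after embedding the (possibly linear-sized) bounded-degree part of $T$, one needs, for every remaining vertex, enough high-degree-vertex images among its neighbours to receive it as a leaf \emph{with exactly the prescribed part sizes}, and the bound ``adjacent to at least half of $D$'' that simple pairing yields is not by itself enough to run the defect-Hall argument with balanced part sizes. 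Closing this gap seems to require a genuinely stronger ``robust star matching from $D$ to $V$'' than anything used in this paper, of a strength comparable to what~\cite{fixedtree} needs for a single prescribed near-star tree but now uniformly over all trees at once; I believe that settling this --- in effect, proving that some graph of edge density $\tfrac12+o(1)$ that Waiter can force is already universal for spanning trees of linear maximum degree --- is the crux of the conjecture.
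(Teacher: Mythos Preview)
The statement you are addressing is a \emph{conjecture} in the paper; the authors do not prove it and offer no argument for it beyond the sentence ``we believe that the maximum degree order in Theorem~\ref{thm:WC.tree.universal} can be improved.'' There is therefore no proof in the paper to compare your proposal against.

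Your write-up is not a proof either, and to your credit you say so explicitly: you outline a plausible attack, correctly isolate the many-leaves case as the bottleneck, and then concede that the robust-domination idea with the set $D$ does not suffice to run the defect-Hall step. That diagnosis is sound --- the obstructions you name (Lemma~\ref{lem:haxell} needing a super-constant expansion factor, condition~(iii) of Lemma~\ref{lem:star-matching} failing when $N_T(L(T))$ has bounded size, Client's graph having edge density only $\tfrac12+o(1)$) are exactly the reasons the method behind Theorem~\ref{thm:universal_new} cannot be pushed to linear $\Delta(T)$ --- but identifying obstructions is not the same as overcoming them. As a proof proposal this has a genuine gap precisely where you locate it yourself: you have no mechanism for embedding the leaves in the near-star regime, and the ``adjacent to half of $D$'' property you can force is, as you note, too weak for the required star matching with prescribed part sizes. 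Until that step is supplied, the conjecture remains open, in your write-up as in the paper.
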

	
	\subsection{Tree universality in Client-Waiter games}
	
	Together with Picker-Chooser games Beck introduced also Chooser-Picker games (cf.~\cite{beck2008combinatorial}), later studied under the name Client-Waiter (cf.~\cite{deanCW}, \cite{hefetzCW}). In a $(1:1)$ Client-Waiter game on some hypergraph $\mathcal{H} = (\mathcal{X},\mathcal{F})$ Waiter picks $2$ elements of the board $\mathcal{X}$ and offers them to Client. Client chooses one of them for himself and returns the rest to Waiter. If there is only one element left in the last round, it goes to Client. Client wins if she fully claims a winning set $F \in \mathcal{F}$; otherwise, Waiter wins. 
	
	It is well known (and observed already by Beck) that the Erd\H os-Selfridge criterion can be adapted for Client-Waiter games 
	%
	and if 
	$$\sum_{F\in \mathcal{F}} 2^{-|F|} < 1,$$
	then in the $(1:1)$ Client-Waiter game on $(X,\mathcal{F})$, Client has a strategy to claim at least one element in each of the sets in $\mathcal{F}$. 
	As in Maker-Breaker games, we can use this criterion to prove that Client can build a good expander in $K_n$. Roughly speaking, we define a family $\mathcal{F}$ of edge sets in $K_n$ with the property that if Client has at least one edge in every set from $\mathcal{F}$, then every vertex set of her graph has a big neighborhood, and there is a Client's edge between every pair of not too small sets. 
	In view of expander properties from~\cite{johannsen2013expanders}, one can deduce that Client in the $(1:1)$ game on $K_n$ can build a graph that contains copies of all trees $T$ with $\Delta(T) \leq \frac{c n^{1/3}}{\log(n)}$. We can further relax the last inequality to $\Delta(T) \leq \frac{c n^{1/2}}{\log(n)}$, if we apply a result from \cite{han2023expanders}. Unfortunately, we do not see how to adapt our proof of Theorem~\ref{thm:WC.tree.universal} to the Client-Waiter version; still, we suspect that the following is true.
	
	\begin{conjecture}
		There exists a constant $c>0$ such that the following holds for every large enough integer $n$.
		In the $(1:1)$ Client-Waiter game on $K_n$,
		Client has a strategy to build a graph which contains 
		a copy of every tree $T$ with $n$ vertices
		and maximum degree $\Delta(T)\leq \frac{cn}{\log(n)}$.
	\end{conjecture}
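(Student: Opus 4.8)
The plan is to reduce the conjecture, exactly as Theorems~\ref{thm:MB.tree.universal} and~\ref{thm:WC.tree.universal} were reduced, to building a graph satisfying the player‑agnostic condition of Theorem~\ref{thm:universal_new}: it suffices to exhibit, for suitable constants $\alpha,\gamma,c,C_0$, a Client strategy in the $(1:1)$ Client‑Waiter game on $K_n$ forcing Client's graph $C$ to satisfy properties (1)--(5). Since in a Client‑Waiter game the \emph{building} player (Client) claims edges one at a time while the opponent (Waiter) only chooses which pairs are offered, the Maker‑Breaker argument of Section~\ref{sec:maker.strategy} is the right template: Client fixes in advance a partition $V=V_1\cup V_2$ with $|V_2|=500\lfloor\gamma n\rfloor$ (this is property~(1)) and a partition of the remaining board into sub‑boards on which the individual properties are handled. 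The two genuine changes are that the Erd\H os--Selfridge criterion is replaced by its Client‑Waiter version (if $\sum_{F\in\mathcal F}2^{-|F|}<1$, then Client can claim an edge of every $F\in\mathcal F$) and that Client, unlike Waiter in Section~\ref{sec:waiter.strategy}, cannot play her sub‑games strictly sequentially.

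The ``expander‑type'' properties transfer with only cosmetic changes. Property~(4) (a Client edge between any two disjoint sets $A\subset V_1$, $B\subset V$ of size $\lfloor C_0\log n\rfloor$) follows from the Client‑Waiter Erd\H os--Selfridge criterion applied to $\{E_{G_3}(A,B)\}$, since $\sum 2^{-|F|}\le n^{2C_0\log n}\cdot 2^{-\Omega(C_0^2(\log n)^2)}=o(1)$ once $C_0$ is a large enough constant; property~(2) is obtained the same way, together with a Client‑Waiter pairing/box‑game argument to grow the star at $x^\ast$ (Client can guarantee degree $\Omega(n)$ into any fixed linear‑sized vertex set, so $\Theta(\log n)$ at a fixed vertex is routine). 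Property~(3), the pair‑degree condition, is the first place needing care: mimicking Lemma~\ref{lem:large.common.degree}, I would split $V_1$ randomly, use a Client‑Waiter analogue of the ``distribute evenly over a family'' lemma (Lemma~\ref{lem:criterion.multistage}) to make Client claim a constant fraction of each of polynomially many medium‑sized edge sets, and then take a union bound ensuring every $v$ has at least $\alpha n$ Client‑common‑neighbours with all but $O(\log n)$ vertices $w$. The only new ingredient is the Client‑Waiter multistage lemma, which should follow from the same martingale/box‑game reasoning as its Maker‑Breaker counterpart.

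The main obstacle is property~(5), the suitable $K_5$‑factor structure on $V_2$. Part~(5.b) is once more a Client degree game into $V_2$ for the vertices of $\mathcal K_{bad}$, and part~(5.c) can be arranged by an Erd\H os--Selfridge argument \emph{after} the factor is fixed, because the complete bipartite graphs between distinct clique pairs are edge‑disjoint and still entirely free at that point, so Client can force a matching of size $3$ across all but $\gamma n$ pairs of good cliques. What is missing is the Client‑Waiter counterpart of Lemma~\ref{lem:k5} (used in the Maker‑Breaker proof) and of the Waiter‑Client clique‑factor result of~\cite{dvovrak2023waiter} (used in Section~\ref{sec:waiter.strategy}): that Client can force a spanning $K_5$‑factor on $K_{|V_2|}$ --- or, as in Section~\ref{sec:waiter.strategy}, a $K_5$‑factor on each block of an equipartition of $V_2$ --- within $O(n^{2-\varepsilon})$ rounds. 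Proving such a statement, presumably by a randomized absorption strategy adapted to the Client‑Waiter rules, is where I expect the real difficulty: Client has neither the free choice of edge available to Maker nor the control over the order of offers available to Waiter, so neither existing factor‑game proof transfers verbatim. Finally, Client's inability to separate the sub‑games in time is handled by the standard device of responding on whichever sub‑board Waiter's offer lies in and, when an offered pair straddles two sub‑boards, responding on one and accounting for the forfeited edge as an extra opponent move on the other; this inflates each sub‑game's effective bias only by a bounded additive constant, which the choice of $\alpha,\gamma,c,C_0$ absorbs.
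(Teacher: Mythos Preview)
This statement is a \emph{conjecture}; the paper gives no proof and the authors explicitly say that they ``do not see how to adapt our proof of Theorem~\ref{thm:WC.tree.universal} to the Client-Waiter version.'' There is thus no paper proof to compare against --- what you have written is an outline for attacking an open problem, and it should be judged on whether the gaps you leave are genuinely routine.

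You correctly flag the Client‑Waiter $K_5$‑factor as a missing ingredient, but you substantially underestimate the difficulty of running several sub‑games in parallel. In Section~\ref{sec:maker.strategy} Maker always responds on the sub‑board where Breaker just moved; in Section~\ref{sec:waiter.strategy} Waiter controls the offers and simply plays the stages sequentially. Client has neither option: Waiter may, in every single round, pair one edge from sub‑board $A$ with one edge from sub‑board $B$, and your assertion that this ``inflates each sub‑game's effective bias only by a bounded additive constant'' is false --- on whichever sub‑board Client systematically declines, Waiter accrues an unbounded number of unanswered moves. The Client‑Waiter Erd\H os--Selfridge criterion, and any hoped‑for analogue of Lemma~\ref{lem:criterion.multistage}, apply to the global game Client is actually playing, not to a sub‑board she visits only intermittently, so the half‑dozen separate potential‑function strategies cannot be stitched together as you describe. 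A closely related problem is temporal: in both existing proofs the set $S^\ast$ and the sub‑board dedicated to property~(2.c) are defined only \emph{after} Stage~I has produced $S^\ast=N(x^\ast)$. Client cannot impose any order of play, so she would have to fix $S^\ast$ in advance; but then (2.a) requires her to claim all of $E(x^\ast,S^\ast)$, which Waiter defeats trivially by offering two such edges together. These obstructions --- not only the clique factor --- are what led the authors to leave this as a conjecture, and your sketch does not get past them.
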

	
	The degree order in the above conjecture cannot be improved since it is known \cite{fixedtree} that there exists a constant $C>0$ and a tree $T$ with $n$ vertices
	and maximum degree $Cn/\log(n)$ such that Client cannot build a copy of $T$ in $K_n$.
	
	\subsection{Tree universality in Avoider-Enforcer games}
	
	Finally, let us mention another class of positional games, called Avoider-Enforcer or Avoider-Forcer games (cf.~\cite{beck2008combinatorial}, \cite{hefetz2014positional}). For simplicity, let us focus on the symmetric and so-called strict version only. In a $(1:1)$ Avoider-Enforcer game on some hypergraph $\mathcal{H} = (\mathcal{X},\mathcal{F})$ Avoider (who starts) and Enforcer select in turns one (not yet selected) element of the board $\mathcal{X}$, until all elements are selected. Enforcer wins if at the end of the game all elements of at least one set $F \in \mathcal{F}$ belong to Avoider; otherwise Avoider is the winner. Lu \cite{Lu1991} proved that the Erd\H os-Selfridge condition on $\mathcal{H}$ implies that Avoider has a winning strategy in the $(1:1)$ Avoider-Enforcer game on $\mathcal{H}$. Furthermore, it is known that the assertion holds also when Enforcer starts the game. In view of that, we can say that the Erd\H os-Selfridge condition implies that the second player in the $(1:1)$ Avoider-Enforcer game on $\mathcal{H}$ can force the first player to claim at least one element in each of the sets in $\mathcal{H}$. 
	It is now enough to add expander properties from~\cite{han2023expanders} to infer that Enforcer in the $(1:1)$ game on $K_n$ can force Avoider to build a graph that contains copies of all trees $T$ with  $\Delta(T) \leq \frac{c n^{1/2}}{\log(n)}$. It seems challenging to improve this result.  
	\subsection{Waiter-Client minimum pair degree game}
	In Lemma~\ref{lem:large.common.degree}, we prove that Waiter can force Client in the $(1:1)$ Waiter-Client game on $K_n$ to claim a graph where each pair of vertices has pair degree $\alpha n$ for some suitable $\alpha$. We believe that this result might be of independent interest. Furthermore, we want to pose the following problem:
	\begin{problem}
		Find the maximum $\alpha$ such that for every large enough $n$ Waiter has a strategy in a $(1:1)$ Waiter-Client game on $K_n$ to force Client to claim a graph with the following property: for any two vertices $v,w \in V(K_n)$ we have $|N_C(v) \cap N_C(w)| \geq \alpha n$.
	\end{problem}
	Note that in the Maker-Breaker version, Breaker can easily ensure that $|N_M(v) \cap N_M(w)| = 0$ for two fixed vertices $v, w \in V(K_n)$.
	\bibliographystyle{amsplain}
	\bibliography{references}

\end{document}